	\definecolor{egyptianblue}{rgb}{0.06, 0.2, 0.65}
	\definecolor{green(ncs)}{rgb}{0.0, 0.62, 0.42}
\titleformat{\chapter}[display]
{\normalfont\huge\bfseries}{\chaptertitlename\\thechapter}{20pt}{\Huge}
\titleformat{\paragraph}[runin]
{\normalfont\normalsize\bfseries}{\theparagraph}{1em}{}
\titleformat{\subparagraph}[runin]
{\normalfont\normalsize\bfseries}{\thesubparagraph}{1em}{}
\titlespacing*{\chapter} {0pt}{50pt}{40pt}
\titlespacing*{\section} {0pt}{3.5ex plus 1ex minus .2ex}{2.3ex plus .2ex}
\titlespacing*{\subsection} {0pt}{3.25ex plus 1ex minus .2ex}{1.5ex plus .2ex}
\titlespacing*{\subsubsection}{0pt}{3.25ex plus 1ex minus .2ex}{1.5ex plus .2ex}
\titlespacing*{\paragraph} {0pt}{3.25ex plus 1ex minus .2ex}{1em}
\titlespacing*{\subparagraph} {\parindent}{3.25ex plus 1ex minus .2ex}{1em}
\newtheorem{theorem}{Theorem}[section]
\newtheorem{lemma}[theorem]{Lemma}
\newtheorem{proposition}[theorem]{Proposition}
\newtheorem{corollary}[theorem]{Corollary}
\theoremstyle{definition}
\newtheorem{definition}[theorem]{Definition}
\newtheorem{notation}[theorem]{Notation}
\theoremstyle{remark}
\newtheorem{remark}[theorem]{Remark}
\DeclareMathOperator{\Ext}{Ext}
\DeclareMathOperator{\EXT}{EXT}
\DeclareMathOperator{\Soc}{Soc}
\DeclareMathOperator{\Z}{Z}
\DeclareMathOperator{\Hom}{Hom}
\DeclareMathOperator{\Tot}{Tot}
\DeclareMathOperator{\sh}{sh}
\DeclareMathOperator{\sg}{sg}
\DeclareMathOperator{\ho}{H}
\newcommand{\ov}{\overline}
\newcommand{\ot}{\otimes}
\newcommand{\wh}{\widehat}
\newcommand{\xcirc}{\circ}
\numberwithin{equation}{section}
\DeclareMathAlphabet{\mathpzc}{OT1}{pzc}{m}{it}
\let\origmaketitle\maketitle
\def\maketitle{
	\begingroup
	\def\uppercasenonmath##1{} % this disables uppercasing title
	\let\MakeUppercase\relax % this disables uppercasing authors
	\origmaketitle
	\endgroup
}
\begin{document}
	
\title{\large Extensions of Linear Cycle Sets}

\author{Jorge A. Guccione}
\address{Departamento de Matem\'atica\\ Facultad de Ciencias Exactas y Naturales-UBA, Pabell\'on~1-Ciudad Universitaria\\ Intendente Guiraldes 2160 (C1428EGA) Buenos Aires, Argentina.}
\address{Instituto de Investigaciones Matem\'aticas ``Luis A. Santal\'o''\\ Pabell\'on~1-Ciudad Universitaria\\ Intendente Guiraldes 2160 (C1428EGA) Buenos Aires, Argentina.}
\email{vander@dm.uba.ar}
	
\author{Juan J. Guccione}
\address{Departamento de Matem\'atica\\ Facultad de Ciencias Exactas y Naturales-UBA, Pabell\'on~1-Ciudad Universitaria\\ Intendente Guiraldes 2160 (C1428EGA) Buenos Aires, Argentina.}
\address{Instituto Argentino de Matem\'atica-CONICET, Saavedra 15 3er piso\\ (\!C1083ACA\!) Buenos Aires, Argentina.}
\email{jjgucci@dm.uba.ar}
	
\author[C. Valqui]{Christian Valqui}
\address{Pontificia Universidad Cat\'olica del Per\'u - Instituto de Matem\'atica y Ciencias Afi\-nes, Secci\'on Matem\'aticas, PUCP, Av. Universitaria 1801, San Miguel, Lima 32, Per\'u.}
\email{cvalqui@pucp.edu.pe}

\subjclass[2020]{55N35, 20E22, 16T25}
\keywords{Linear cycle sets, extensions, cohomology}
	
\begin{abstract}
We generalize the cohomology theory for linear cycle sets introduced by Lebed and Vendramin. Our cohomology  classifies extensions of linear cycle sets by trivial ideals, whereas the cohomology of Lebed and Vendramin only deals with central ideals $I$ (which are automatically trivial). Therefore our theory gives an analog to the theory of extensions of braces by trivial ideals constructed by Bachiller, but from a cohomological point of view. We also study the general notions of extensions of linear cycle sets and the equivalence of extensions.
\end{abstract}

%\begin{nouppercase}	
\maketitle
%\end{nouppercase}
	
\tableofcontents
	
\section*{Introduction}	

The celebrated Yang–Baxter equation was first established by Yang ~\cite{Y} in 1967 and then by Baxter~\cite{B} in 1972, motivated by problems in quantum physics. Since then, many solutions of various forms of the
Yang–Baxter equation have been constructed by physicists and mathematicians. In the past three decades this equation has been widely studied from different
perspectives and attracted the attention of a wide range of mathematicians because of the applications to knot theory, representations of braid groups, Hopf algebras, quantum groups, etcetera.

The relevance of this equation in mathematics and physics and the fact that the construction of new solutions is widely open, led Drinfeld~\cite{Dr} to ask for the family of set-theoretical solutions, which are relevant because of its relations with other important mathematical structures such that Affine torsors, Solvable grous, Biberbach groups and groups of I-type, Artin-Schelter regular rings, Garside structures, biracks, Hopf algebras, left symmetric algebras, etcetera; see for example~\cites{CR, CJO2, CJR, DG, D, GI, GI2, GIVB, JO, S}. An important class of set-theoretical solutions of the Yang-Baxter equation are the non-degenerated involutive solutions. The study of these solutions led to the introduction of the following mathematical structures: bijective $1$-cocyles, braces and linear cycle sets~\cites{CJO, ESS, LV, R1, R3} (which, in fact, are different avatars of the same notion).

This paper is written in the language of linear cycle sets, which are abelian groups $(A,+)$, endowed with another binary operation satisfying suitable conditions. This structure was introduced by  Rump in~\cite{R1}.

In~\cite{B} Bachiller started the study of extensions of linear cycle sets by trivial linear cycle sets (in the language of braces) due to its relation with the classification problem of these structures. An alternative approach to extensions was suggested earlier in~\cite{DG}), and it was translated to the language of braces by Bachiller in~\cite{B1}. In parallel, and more or less at the same time, in~\cite{LV} the authors define the notion of central extensions of linear cycle sets and introduce a cohomology theory that classifies them.

The aim of this paper is to introduce the general notion of extension of a linear cycle set~$H$ by a linear cycle set~$I$ and to prove that the extensions of~$H$ by a trivial linear cycle set~$I$ are classified by the second cohomology group of a cochain complex, generalizing the main results in~\cite{LV}. Our main results are Corollary~\ref{ppal seccion 4}, Theorem~\ref{principal}, Corollary~\ref{equivalencia}, Theorem~\ref{complejo caso general} and Corollary~\ref{equiv caso general}.

This paper is organized as follows: In Section~\ref{section: Preliminaries} we review quickly the definitions of brace and linear cycle set and the relation between these notions. We also review the definition of ideal of linear cycle set. In Section~\ref{Extensions of linear cycle sets}, we give the general notion of an extension
\begin{equation}\label{extensiones en la introduccion}
\begin{tikzpicture}
\begin{scope}[yshift=0cm,xshift=0cm, baseline]
		\matrix(BPcomplex) [matrix of math nodes, row sep=0em, text height=1.5ex, text
		depth=0.25ex, column sep=2.5em, inner sep=0pt, minimum height=5mm, minimum width =6mm]
		{0 & I & B & H & 0,\\};
		\draw[-{latex}] (BPcomplex-1-1) -- node[above=1pt,font=\scriptsize] {} (BPcomplex-1-2);
		\draw[-{latex}] (BPcomplex-1-2) -- node[above=1pt,font=\scriptsize] {$\iota$} (BPcomplex-1-3);
		\draw[-{latex}] (BPcomplex-1-3) -- node[above=1pt,font=\scriptsize] {$\pi$} (BPcomplex-1-4);
		\draw[-{latex}] (BPcomplex-1-4) -- node[above=1pt,font=\scriptsize] {} (BPcomplex-1-5);
\end{scope}
\end{tikzpicture}
\end{equation}
of a linear cycle set $H$ by a linear cycle set $I$, and we define when two such extensions are equivalent, mimicking the classical way of presenting these notions for abelian groups. Then, given an extension~\eqref{extensiones en la introduccion}, and a set-theoretic section $s$ of $\pi$ with $s(0)=0$, we obtain weak actions $\blackdiamond\colon H\times I\to I$ and $\Yleft \colon I\times H\to I$ and (in general non cohomological) cocycles $\beta\colon H\times H\to I$ and $f\colon H\times H\to I$ that allow us to express the structure operations of $B$ in terms of these maps and the structure operations of $H$ and $I$ (see formulas \eqref{construccion de beta} and \eqref{formula para cdot en I times H}). Moreover, we prove that when $I$ is trivial, the maps $\blackdiamond$ and $\Yleft$ do not depend on $s$. In Section~\ref{Building extensions of linear cycle sets}, given maps $\blackdiamond$, $\Yleft$, $\beta$ and $f$ as above, we obtain necessary and sufficient conditions that they must satisfy in order that $I\times H$ becomes a linear cycle set via \eqref{construccion de beta} and \eqref{formula para cdot en I times H} (with $s(h)=w_h\coloneqq (0,h)$). Moreover, we obtain conditions that guarantee that two such extensions are equivalent. In Section~\ref{Extensions with central cocycles} we assume that $h\blackdiamond (y\Yleft h')$, $h\blackdiamond \beta(h',h'')$ and $h\blackdiamond f(h',h'')$ belong to the center of $I$ for all $h,h',h''\in H$ and $y\in I$, and we prove that under this hypothesis the conditions in Section~\ref{Building extensions of linear cycle sets} can be considerably simplified. Moreover, when $I$ is trivial we compare our maps $\blackdiamond$ and $\Yleft$ with the maps $\sigma$ and $\nu$ introduced in~\cite{B}. In Section~\ref{Cohomology of linear cycle sets1}, for a linear cycle set $H$, a trivial linear cycle set $I$ and a map $\blackdiamond$ satisfying suitable conditions, we construct a double cochain complex whose second cohomology group classifies the extensions of $H$ by $I$ with $\Yleft=0$ and action $\blackdiamond$ (these are the extensions~\eqref{extensiones en la introduccion} with $I$ included in the socle of~$B$). For central extensions (which are those in which $\blackdiamond$ is the trivial action and $\Yleft=0$) this complex coincides with the normalized complex obtained in~\cite{LV}*{Section 4}. Finally, in Section~\ref{Cohomology of linear cycle sets} we remove the hypothesis that $\Yleft=0$ (when $\Yleft\ne 0$ the obtained cochain complex is no longer a double complex).

\smallskip

The following diagram illustrates the relations between the different sections of this paper and the connections with the articles~\cite{B} and~\cite{LV}.
\begin{center}
\begin{tikzpicture}
\draw(4.81,7.6) node {\textrm{Extensions of LCS}};
\draw(4.81,6.88) node[rotate=90] {\scalebox{1.8}{$\subset$}};
\draw(4.8,6) node {$\begin{array}{c} \textrm{Extensions of LCS} \\ \textrm{with central cocycles} \end{array}$};
\draw(4.81,5.35) node[rotate=90] {\scalebox{1.8}{$\subset$}};
\draw(4.8,4.6) node {$\begin{array}{c} \textrm{Extensions of LCS} \\ \textrm{by trivial ideals} \end{array}$};
\draw(4.81,3.75) node[rotate=90] {\scalebox{1.8}{$\subset$}};
\draw(4.81,3.05) node {\textrm{Extensions of LCS with $\Yleft=0$}};
\draw(4.81,2.35) node[rotate=90] {\scalebox{1.8}{$\subset$}};
\draw(4.81,1.65) node {\textrm{Central extensions of LCS}};
\draw(1.7,7.6) node {$\EXT(H,I)$};
\draw[-{latex}{latex}] (3.225,7.6)--(2.625,7.6);
\draw(1.15,4.6) node {$\Ext_{\blackdiamond,\Yleft}(H,I)$};
\draw[-{latex}{latex}] (2.95,4.6)--(2.35,4.6);
\draw(0.9,3.05) node {$\Ext_{\blackdiamond}(H,I)$};
\draw[-{latex}{latex}] (2.4,3.05)--(1.8,3.05);
\draw(1.2,1.65) node {$\Ext(H,I)$};
\draw[-{latex}{latex}] (2.65,1.65)--(2.05,1.65);
\draw(-1.6,4.6) node {$H_{\blackdiamond,\Yleft}^2(H,I)$};
\draw[<->, latex-latex] (-0.65,4.6)--(0.05,4.6);
\draw(-1.45,3.05) node {$H_{\blackdiamond}^2(H,I)$};
\draw[<->, latex-latex] (-0.65,3.05)--(-0.05,3.05);
\draw(-1.15,1.65) node {$H^2(H,I)$};
\draw[<->, latex-latex] (-0.35,1.65)--(0.35,1.65);
\draw[<->, latex-latex] (6.65,4.6)--(7.35,4.6);
\draw (9.15,4.6) node {$\begin{array}{c} \textrm{Extensions of braces} \\ \textrm{by trivial ideals} \end{array}$};
\draw[rounded corners,Red3] (7.5,4.15) rectangle (10.8,5.05) {};
\draw[Red3] (9.2,3.85) node{Bachiller in~\cite{B}*{Section 3}};
\draw[rounded corners,Red3] (-1.95,1.35) rectangle (6.9,1.95) {};
\draw[Red3] (2.5,1.05) node{Lebed-Vendramin in~\cite{LV}};
\draw[rounded corners,RoyalBlue3] (-2.25,2.75) rectangle (7.2,3.35) {};
\draw[RoyalBlue3] (2.475,3.6) node{Section 5};
\draw[rounded corners,RoyalBlue3] (-2.55,4.3) rectangle (2.2,4.9) {};
\draw[RoyalBlue3] (-0.2,5.15) node{Section 6: $I$ trivial};
\draw[rounded corners,RoyalBlue3] (3.1,4.15) rectangle (6.5,6.5) {};
\draw[RoyalBlue3] (2.25,6.05) node{Section 4};
\draw[rounded corners,RoyalBlue3] (0.75,7.3) rectangle (6.35,7.9) {};
\draw[RoyalBlue3] (3.525,8.15) node{Sections 2 \& 3};
\end{tikzpicture}
\end{center}

\paragraph{Precedence of operations} The operations precedence in this paper is the following: the operators with the highest precedence are the unary operation $a\mapsto a^{-1}$ and the binary operation $(a,b)\mapsto a^b$; then comes the multiplication $(a, b)\mapsto ab$; then, the operations $\cdot$, $\blackdiamond$ and $\triangleleft$, that have equal precedence; then the operator $\Yleft$, and finally, the sum. Of course, as usual, this order of precedence can be modified by the use of parenthesis.
	
\section{Preliminaries}\label{section: Preliminaries}
A {\em left brace} is an abelian group $(A,+)$ endowed with an additional group operation $(a,b)\mapsto ab$, such that
\begin{equation}\label{compatibilidad braza}
a(b+c)= ab+ac-a\quad\text{for all $a,b,c\in A$.}
\end{equation}
The two group structures necessarily share the same neutral element, denoted by $0$. Of course, the additive inverse of $a$ is denoted by $-a$, while its multiplicative inverse is denoted by $a^{-1}$. In particular $0^{-1} = 0$.
	
\smallskip
	
A {\em linear cycle set} is an abelian group $(A,+)$ endowed with a binary operation $\cdot$, having bijective left translations $a\mapsto b\cdot a$, and satisfying the conditions
\begin{align}
&a\cdot (b+c)=a\cdot b + a\cdot c\label{compatibilidad cdot suma}
\shortintertext{and}
&(a+b)\cdot c = (a\cdot b)\cdot (a\cdot c),\label{compatibilidad suma cdot}
\end{align}
for all $a,b,c\in A$. It is well known that each linear cycle set is a cycle set. That is:
\begin{equation}
(a\cdot b)\cdot (a\cdot c)=(b\cdot a)\cdot (b\cdot c)\quad\text{for all $a,b,c\in A$}\label{condicion de cycle set}.
\end{equation}
The notions of left braces and linear cycle sets were introduced by Rump, who proved that they are equivalent via the relation
\begin{equation}
a\cdot b = a^{-1}(a+b)\quad \text{and}\quad ab = {}^ab + a,\label{equivalencia linear cycle set braza}
\end{equation}
where the map $b\mapsto {}^ab$ is the inverse of the map $b\mapsto a\cdot b$. In Section~\ref{Cohomology of linear cycle sets}, we will use that, for all $a,b,c\in A$,
\begin{equation}\label{formula que usaremos}
a\cdot(b+c) = a\cdot b+a\cdot c = (a\cdot b)((a\cdot b)\cdot (a\cdot c))  = (a\cdot b)((a+b)\cdot c).
\end{equation}
	
Each additive abelian group $A$ is a linear cycle set via $a\cdot b\coloneqq b$. These ones are the so called {\em trivial linear cycle sets}. Note that by~\eqref{equivalencia linear cycle set braza}, a linear cycle set $A$ is trivial if and only if $ab = a+b$ for all $a,b\in A$.

\smallskip

Let $A$ be a linear cycle set. An {\em ideal} of $A$ is  a subgroup $I$ of $(A,+)$ such that $a\cdot y \in I$ and $y\cdot a - a\in I$ for all $a\in A$ and $y\in I$. An ideal $I$ of $A$ is called a {\em central ideal} if $y\cdot a=a$ and $a\cdot y=y$ for all $y\in I$ and $a\in A$. The {\em socle} of $A$ is the ideal $\Soc(A)$ of all $y\in A$ such that $y\cdot a = a$ for all $a\in A$. The {\em center} of $A$, is the set $Z(A)$ of all $y\in \Soc(A)$ such that $a\cdot y = y$ for all $a\in A$. The center of $A$ is a central ideal of $A$, and each central ideal of $A$ is included in $Z(A)$.

\begin{notation}\label{Yleft} Given an ideal $I$ of a linear cycle set $(A,+,\cdot)$, for each $y\in I$ and $a\in A$ we set $y \Yleft a\coloneqq y\cdot a - a$.
\end{notation}

\section{Extensions of linear cycle sets}\label{Extensions of linear cycle sets}
In this section we define the notion of extensions of linear cycle sets mimicking the notion of extensions of abelian groups. Our main result is the formula for $\cdot$ in~\eqref{formula para cdot en I times H}. Let $I$ and $H$ be additive abelian groups. Recall that an extension $(\iota, B, \pi)$, of $H$ by $I$, is a short exact sequence
\begin{equation}
\begin{tikzpicture}
	\begin{scope}[yshift=0cm,xshift=0cm, baseline]
		\matrix(BPcomplex) [matrix of math nodes, row sep=0em, text height=1.5ex, text
		depth=0.25ex, column sep=2.5em, inner sep=0pt, minimum height=5mm, minimum width =6mm]
		{0 & I & B & H & 0,\\};
		\draw[-{latex}] (BPcomplex-1-1) -- node[above=1pt,font=\scriptsize] {} (BPcomplex-1-2);
		\draw[-{latex}] (BPcomplex-1-2) -- node[above=1pt,font=\scriptsize] {$\iota$} (BPcomplex-1-3);
		\draw[-{latex}] (BPcomplex-1-3) -- node[above=1pt,font=\scriptsize] {$\pi$} (BPcomplex-1-4);
		\draw[-{latex}] (BPcomplex-1-4) -- node[above=1pt,font=\scriptsize] {} (BPcomplex-1-5);
	\end{scope}\label{sucesion exacta corta de grupos abelianos}
\end{tikzpicture}
\end{equation}
of additive abelian groups. Recall also that two such extensions, $(\iota, B, \pi)$ and $(\iota', B', \pi')$, are equivalent if there exists a morphism $\phi\colon B\to B'$ such that $\pi'\xcirc \phi=\pi$ and $\phi\xcirc \iota=\iota'$. Then, $\phi$ is necessarily an isomorphism.

\smallskip

Let $(\iota, B, \pi)$ be an extensions of $H$ by $I$ and let $s\colon H\to B$ be a set theoretic section of $\pi$ with $s(0)=0$. Then, for each $ b\in B$ there exist unique $y\in I$ and $h\in H$ such that $x=\iota(y)+s(h)$. Moreover, since~$\iota$ and $\pi$ are morphisms and $\iota$ is injective, there exists a unique map $\beta\colon H\times H\to I$ such that
\begin{equation}\label{construccion de beta}
\iota(y)+s(h) + \iota(y')+s(h')= \iota(y) + \iota(y')+ s(h) + s(h')= \iota(y+y' + \beta(h,h')) + s(h+h').
\end{equation}
A direct computation shows that
\begin{align}
& \beta(h,h')+\beta(h+h',h'') = \beta(h',h'')+\beta(h,h'+h''),\label{condicion de cociclo}\\
&\beta(h,0) = \beta(0,h) = 0\qquad\text{and}\qquad \beta(h,h') = \beta(h',h)\label{normalidad y cociclo abeliano},
\end{align}	
for all $h,h'\in H$. In fact, the first condition is equivalent to the associativity of the sum in $B$; the second one, to the fact that $s(0)=0$, and the third one, to the commutativity of the sum in $B$. Conversely, given a map $\beta\colon H\times H\to I$ satisfying~\eqref{condicion de cociclo} and~\eqref{normalidad y cociclo abeliano}, we have an extension
\begin{equation}\label{sucesion exacta corta de grupos abelianos 2}
\begin{tikzpicture}
\begin{scope}[yshift=0cm,xshift=0cm, baseline]
		\matrix(BPcomplex) [matrix of math nodes, row sep=0em, text height=1.5ex, text
		depth=0.25ex, column sep=2.5em, inner sep=0pt, minimum height=5mm, minimum width =6mm]
		{0 & I & I\times_{\beta} H & H & 0,\\};
		\draw[->] (BPcomplex-1-1) -- node[above=1pt,font=\scriptsize] {} (BPcomplex-1-2);
		\draw[->] (BPcomplex-1-2) -- node[above=1pt,font=\scriptsize] {$\iota$} (BPcomplex-1-3);
		\draw[->] (BPcomplex-1-3) -- node[above=1pt,font=\scriptsize] {$\pi$} (BPcomplex-1-4);
		\draw[->] (BPcomplex-1-4) -- node[above=1pt,font=\scriptsize] {} (BPcomplex-1-5);
\end{scope}
\end{tikzpicture}
\end{equation}
of $H$ by $I$, in which  $I\times_{\beta} H$ is $I\times H$ endowed with the sum
$$
(y,h)+(y',h')\coloneqq (y+y'+\beta(h,h'),h+h'),
$$
and the maps $\iota$ and $\pi$ are the canonical ones. For the sake of simplicity, here and subsequently we will write $y$ instead of $(y,0)$ and we set $w_h\coloneqq (0,h)$. Thus,
$(y,h)=(y,0)+(0,h)=y+w_h$. With this notation $\iota(y) = y$, $\pi(y+w_h) =h$ and the sum in $I\times_{\beta} H$ reads
\begin{equation}\label{formula para suma en I times H}
(y+w_h) + (y'+w_{h'}) = y+y'+\beta(h,h')+ w_{h+h'}.
\end{equation}
It is evident that each extension of abelian groups$(\iota,B,\pi)$, endowed with a set theoretic section $s$ of $\pi$ such that $s(0)=0$, is equivalent to the extension \eqref{sucesion exacta corta de grupos abelianos 2} associated with the map $\beta$ obtained in~\eqref{construccion de beta}. Concretely, the map $\phi\colon I\times_{\beta} H\to B$ is given by $\phi(y+w_h)=\iota(y)+s(h)$. Moreover, it is well known that two extensions $(\iota,I\times_{\beta} H,\pi)$ and $(\iota,I\times_{\beta'} H,\pi)$ are equivalent if and only if there exists a map $\varphi\colon H\to I$, such that
\begin{equation}\label{equivalencia a nivel de qrupos}
\varphi(0) = 0\qquad\text{and}\qquad \varphi(h)-\varphi(h+h')+\varphi(h')=\beta(h,h') - \beta'(h,h')\quad\text{for all $h,h'\in H$.}
\end{equation}
Concretely, the map $\phi\colon I\times_{\beta} H\to I\times_{\beta'} H$ performing the equivalence is given by $\phi(y+w_h)= y+\varphi(h)+w_h$.

\begin{definition}\label{extensiones de linear cycle sets} Let $I$ and $H$ be linear cycle sets. An {\em extension} $(\iota,B, \pi)$, of $H$ by $I$, is a short exact sequence like~\eqref{sucesion exacta corta de grupos abelianos}, in which $B$ is a linear cycle set and both $\iota$ are $\pi$ are linear cycle sets morphisms. Two extensions $(\iota,B,\pi)$ and $(\iota',B',\pi')$, of $H$ by $I$, are {\em equivalent} if there exists a linear cycle set morphism $\phi\colon B\to B'$ such that $\pi'\xcirc \phi=\pi$ and $\phi\xcirc \iota=\iota'$. As for extension of groups, then $\phi$ is an isomorphism. We let $\EXT(H;I)$ denote the set of equivalence classes of extensions $(\iota,B,\pi)$, of $H$ by $I$.
\end{definition}

\subsection[Formula for \texorpdfstring{$\cdot$}{.}]{Formula for \texorpdfstring{$\pmb{\cdot}$}{.}}\label{formula for cdot}
Assume that~\eqref{sucesion exacta corta de grupos abelianos} is an extension of linear cycle sets and let $s$ be a section of $\pi$ with $s(0)=0$. For the sake of simplicity in the rest of this section we identify $I$ with $\iota(I)$ and we write $y$ instead of $\iota(y)$. Since, by~\eqref{compatibilidad cdot suma},
$$
(y+s(h))\cdot (y'+s(h')) = (y+s(h))\cdot y' + (y+s(h))\cdot s(h'),
$$
we can study the action $\cdot$ of $B$ on elements of $I$ and $s(H)$ separately.

\paragraph{Action on elements of $\mathbf{I}$} Since by~\eqref{compatibilidad suma cdot} and~\eqref{condicion de cycle set}
$$
(y+s(h))\cdot y' = (y\cdot s(h))\cdot (y\cdot y') = (s(h)\cdot y)\cdot (s(h)\cdot y'),
$$
the action $\cdot$ of $B$ on $I$ is determined by the operation $\cdot$ of $I$ and the map $\blackdiamond\colon H\times I\to I$ given by $h\blackdiamond y \coloneqq s(h)\cdot y$.

\paragraph{Action on elements of $\mathbf{s(H)}$} Since $\pi\colon B\to H$ is a linear cycle set morphism, there is a map $f\colon H\times H\to I$, such that
$$
s(h)\cdot s(h') = f(h,h')+ s(h\cdot h')\quad\text{for all $h,h'\in H$.}
$$
By \eqref{compatibilidad cdot suma}, \eqref{compatibilidad suma cdot} and~\eqref{condicion de cycle set}, we have
\begin{align*}
(y+s(h))\cdot s(h')  & = (s(h)\cdot y)\cdot (s(h)\cdot s(h'))\\
& = (h\blackdiamond y)\cdot (f(h,h') + s(h\cdot h'))\\
& = (h\blackdiamond y)\cdot f(h,h') + (h\blackdiamond y)\cdot s(h\cdot h')\\
& = (h\blackdiamond y)\cdot f(h,h') +  h\blackdiamond y\Yleft s(h\cdot h') + s(h\cdot h'),
\end{align*}

\paragraph{Formula for $\pmb{\cdot}$ on $\mathbf{B}$}
From now on, for $y\in I$ and $h\in H$, we set $y\Yleft h\coloneqq y\Yleft s(h) = y\cdot s(h)-s(h)$. By the discussion in the previous paragraphs, for all $y,y'\in I$ and $h,h'\in H$, we have:
\begin{equation}\label{formula para cdot en I times H}
(y+s(h))\cdot (y'+s(h'))= (h\blackdiamond y)\cdot (h\blackdiamond y') + (h\blackdiamond y) \cdot f(h,h') + h\blackdiamond y \Yleft h\cdot h' + s(h\cdot h').
\end{equation}

\begin{proposition}\label{primeras propiedades} The following properties hold:

\begin{enumerate}[itemsep=0.0ex, topsep=1.0ex, label=\emph{(\arabic*)}]

\item $h\blackdiamond (y+y') = h\blackdiamond  y+h\blackdiamond  y'$, for all $h\in H$ and $y,y'\in I$,

\item $0\blackdiamond  y = y$, for all $y\in I$,

\item $0\Yleft h = 0$, for all $h\in H$,

\item $y \Yleft 0 = 0$, for all $y\in I$,

\item $f(h,0) = f(0,h) = 0$, for all $h\in H$.

%\item $\tau(h,h')\cdot (hh'\blackdiamond y) = h'\blackdiamond (h\blackdiamond  y)$, for all $h,h'\in H$ and $y\in I$,

\end{enumerate}
%where $\tau(h,h')\coloneqq s(hh')^{-1}s(h)s(h')$.
\end{proposition}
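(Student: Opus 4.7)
The plan is to reduce each of the five identities to a direct computation invoking only (a) the linear cycle set axioms \eqref{compatibilidad cdot suma}, \eqref{compatibilidad suma cdot} together with the bijectivity of left translations, (b) the defining formulas of $\blackdiamond$, $\Yleft$ and $f$ given just above the statement, and (c) the normalization $s(0)=0$ of the chosen section.

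First I would establish two auxiliary identities that hold in any linear cycle set $(A,+,\cdot)$, namely $a\cdot 0=0$ and $0\cdot b=b$ for every $a,b\in A$. The first is obtained by setting $b=c=0$ in \eqref{compatibilidad cdot suma}, which yields $a\cdot 0=a\cdot 0+a\cdot 0$. The second follows by setting $a=b=0$ in \eqref{compatibilidad suma cdot}, which gives $0\cdot c=(0\cdot 0)\cdot (0\cdot c)=0\cdot(0\cdot c)$; since the left translation $b\mapsto 0\cdot b$ is bijective, this forces $0\cdot c=c$.

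With these two preliminary facts in hand, each item of the proposition is a single line. Item (1) is \eqref{compatibilidad cdot suma} applied directly to $s(h)\cdot(y+y')$. For (2), $0\blackdiamond y=s(0)\cdot y=0\cdot y=y$. For (3), $0\Yleft h=0\cdot s(h)-s(h)=s(h)-s(h)=0$. For (4), $y\Yleft 0=y\cdot s(0)-s(0)=y\cdot 0-0=0$. Finally, for (5), the defining relation $s(h)\cdot s(h')=f(h,h')+s(h\cdot h')$ evaluated at $h'=0$ gives $0=s(h)\cdot 0=f(h,0)+s(h\cdot 0)=f(h,0)+s(0)=f(h,0)$, and evaluated at first argument $0$ gives $s(h)=0\cdot s(h)=f(0,h)+s(0\cdot h)=f(0,h)+s(h)$, whence $f(0,h)=0$.

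There is no substantive obstacle. The only step that is not purely mechanical is the derivation of $0\cdot b=b$, which genuinely uses the bijectivity of the left translation $b\mapsto 0\cdot b$; this is easy to overlook because bijectivity plays no role in the two distributivity laws themselves.
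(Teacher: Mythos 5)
Your proof is correct and follows essentially the same route as the paper: items (1)--(4) are the immediate computations the paper dismisses as ``clear'' (resting on the standard facts $a\cdot 0=0$ and $0\cdot b=b$, which you rightly derive from \eqref{compatibilidad cdot suma}, \eqref{compatibilidad suma cdot} and bijectivity of left translations), and your treatment of (5) via $s(h)\cdot s(0)$ and $s(0)\cdot s(h)$ is exactly the paper's argument.
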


\begin{proof} Items~(1)--(4) are clear. Since $I\times H$ and $H$ are linear cycle sets, we have
$$
s(h) = s(0)\cdot s(h) = f(0,h)+ s(0\cdot h) = f(0,h)+ s(h)\quad\text{and}\quad 0 = s(h)\cdot s(0) = f(h,0)+ s(h\cdot 0) = f(h,0),
$$
which implies item~(5). %Finally, we have
%$$
%\tau(h,h')\cdot (hh'\blackdiamond y) = \tau(h,h')\cdot (s(hh')\cdot y) = s(h)s(h')\cdot y = s(h')\cdot (s(h)\cdot y) = h'\blackdiamond (h\blackdiamond y),
%$$
%which proves item~(6).
\end{proof}

The maps $\blackdiamond$ and $\Yleft$ depend on the chosen section $s$ of $\pi$. In the following proposition we prove that this is not the case when $I$ is trivial.

\begin{proposition}\label{diamante e Yleft no dependen de s} Let $H$ and $I$ be linear cycle sets, $(\iota,B, \pi)$ an extension of $H$ by $I$, and $s$ and $s'$ sections~of~$\pi$, such that $s(0)=s'(0) = 0$. Let $\blackdiamond\colon H\times I\to I$, $\blackdiamond'\colon H\times I\to I$, $\Yleft \colon I\times H\to I$ and $\Yleft'\colon I\times H\to I$ be the maps defined by $h\blackdiamond y \coloneqq s(h)\cdot y$, $h\blackdiamond' y \coloneqq s'(h)\cdot y$, $y\Yleft h\coloneqq y\cdot s(h)-s(h)$ and $y\Yleft' h\coloneqq y\cdot s'(h)-s'(h)$. If $I$ is trivial, then $\blackdiamond'  = \blackdiamond$ and $\Yleft' =\Yleft$.
\end{proposition}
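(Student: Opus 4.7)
The plan is to measure the difference between the two sections by a map $\varphi\colon H\to I$ and then exploit the triviality of $I$ to show that this difference disappears when we compute the actions $\blackdiamond$ and $\Yleft$.

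First, I would observe that since $\pi\xcirc s=\pi\xcirc s'=\ide_H$, for each $h\in H$ the element $s(h)-s'(h)$ lies in $\ker\pi=\iota(I)$. Identifying $I$ with $\iota(I)$ as in Subsection~\ref{formula for cdot}, this yields a map $\varphi\colon H\to I$ with $\varphi(0)=0$ and $s(h)=s'(h)+\varphi(h)$ for all $h\in H$.

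For the equality $\blackdiamond=\blackdiamond'$, I would apply~\eqref{compatibilidad suma cdot} to expand
$$
h\blackdiamond y = s(h)\cdot y = (s'(h)+\varphi(h))\cdot y = (s'(h)\cdot \varphi(h))\cdot (s'(h)\cdot y)= (h\blackdiamond' \varphi(h))\cdot (h\blackdiamond' y).
$$
Both factors lie in $I$: the first because $I$ is an ideal (so $h\blackdiamond'\varphi(h)\in I$), the second because $\pi(s'(h)\cdot y)=h\cdot 0=0$. Since $I$ is trivial, the operation $\cdot$ restricted to $I$ is the second projection, so the right hand side equals $h\blackdiamond' y$, giving $h\blackdiamond y = h\blackdiamond' y$.

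For the equality $\Yleft=\Yleft'$, I would use~\eqref{compatibilidad cdot suma} to obtain
$$
y\cdot s(h) = y\cdot s'(h)+y\cdot \varphi(h).
$$
Here $y\cdot \varphi(h)$ is the operation of $I$ on itself, which equals $\varphi(h)$ because $I$ is trivial. Subtracting $s(h)=s'(h)+\varphi(h)$ from both sides yields
$$
y\Yleft h = y\cdot s(h)-s(h)= y\cdot s'(h)+\varphi(h)-s'(h)-\varphi(h) = y\cdot s'(h)-s'(h)=y\Yleft' h,
$$
as desired.

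The only subtle point is to confirm that the cross terms that arise when one expands along $s(h)=s'(h)+\varphi(h)$ actually collapse, and this is exactly where the triviality of $I$ is used: the action of elements of $I$ on elements of $I$ inside $B$ is forced to be trivial, so all the correction terms involving $\varphi(h)$ disappear.
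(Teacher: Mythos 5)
Your proof is correct and follows essentially the same route as the paper: write the difference of the two sections as an $I$-valued map, expand $s(h)\cdot y$ via~\eqref{compatibilidad suma cdot} and $y\cdot s(h)$ via~\eqref{compatibilidad cdot suma}, and use that $I$ is a trivial ideal to collapse the correction terms (the paper does the symmetric expansion of $s'$ in terms of $s$, which is an immaterial difference). Your write-up is in fact slightly more careful in justifying why each cross term lies in $I$.
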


\begin{proof} For $h\in H$ write $j(h)\coloneqq s'(h)-s(h)$. Note that $j(h)\in\iota(I)$. By identities~\eqref{compatibilidad cdot suma} and~\eqref{compatibilidad suma cdot}, the fact that $I$ is an ideal of $B$, and $I$ is trivial, we have
\begin{align*}
&h \blackdiamond' y = s'(h)\cdot y = (s(h)+j(h))\cdot y = (s(h)\cdot j(h))\cdot (s(h)\cdot y) = s(h)\cdot y = h \blackdiamond y
\shortintertext{and}
&y \Yleft' h = y \cdot s(h)- s(h) + y \cdot j(h) - j(h) = y \cdot s(h)- s(h) y \Yleft' h,
\end{align*}
as desired.
\end{proof}

\begin{notation}\label{notacion ext} Let $H$ and $I$ be linear cycle sets and assume that $I$ is trivial. Fix maps $\blackdiamond\colon H\times I\to I$ and $\Yleft \colon I\times H\to I$. We let $\Ext_{\blackdiamond,\Yleft}(H;I)$ denote the set of equivalence classes of  extensions $(\iota,B,\pi)$, of $H$ by $I$, such that $s(h)\cdot y = h\blackdiamond y$ and $y\cdot s(h) = y\Yleft h$, for every section $s$ of $\pi$ satisfying $s(0) = 0$. When $\Yleft$ is the zero map, we write $\Ext_{\blackdiamond}(H;I)$ instead of $\Ext_{\blackdiamond,0}(H;I)$.
\end{notation}

\section{Building extensions of linear cycle sets}\label{Building extensions of linear cycle sets}

In this section
\begin{equation}\label{extension}
\begin{tikzpicture}
\begin{scope}[yshift=0cm,xshift=0cm, baseline]
		\matrix(BPcomplex) [matrix of math nodes, row sep=0em, text height=1.5ex, text
		depth=0.25ex, column sep=2.5em, inner sep=0pt, minimum height=5mm, minimum width =6mm]
		{0 & I & I\times_{\beta} H & H & 0\\};
		\draw[->] (BPcomplex-1-1) -- node[above=1pt,font=\scriptsize] {} (BPcomplex-1-2);
		\draw[->] (BPcomplex-1-2) -- node[above=1pt,font=\scriptsize] {$\iota$} (BPcomplex-1-3);
		\draw[->] (BPcomplex-1-3) -- node[above=1pt,font=\scriptsize] {$\pi$} (BPcomplex-1-4);
		\draw[->] (BPcomplex-1-4) -- node[above=1pt,font=\scriptsize] {} (BPcomplex-1-5);
\end{scope}
\end{tikzpicture}
\end{equation}
is a short exact sequence of abelian groups as in~\eqref{sucesion exacta corta de grupos abelianos 2} (that is,~\eqref{condicion de cociclo} and~\eqref{normalidad y cociclo abeliano} are fulfilled). We assume that $I$ and $H$ are linear cycle sets and we consider maps $\blackdiamond\colon H\times I\to I$, $\Yleft \colon I\times H\to I$ and $f\colon H\times H\to I$ satisfying conditions~(1)--(4) of Proposition~\ref{primeras propiedades}. Moreover, we let $\cdot$ denote the binary operation of $I\times_{\beta} H$ defined as in~\eqref{formula para cdot en I times H} (with $s(h) = w_h$, etcetera). In what follows we are going to use these properties without explicit mention. In Proposition~\ref{caracterizacion de extensiones} we obtain necessary and sufficient conditions that guarantee that~\eqref{extension} is an extension of linear cycle sets. Then, in Proposition~\ref{equivalecia entre extensiones producto}, we determine when two of such extensions are equivalent. Moreover, as usual, any extension of linear cycle set is equivalent to one of these (see Remark~\ref{toda extension es equivalente a una...}).

\begin{remark} For all $h,h'\in H$, we have $h\blackdiamond 0 = 0$ and $w_h\cdot w_{h'} = f(h,h')+ w_{h\cdot h'}$.
\end{remark}

\begin{proposition}\label{segundas propiedades} The equalities $y\cdot w_h = y\Yleft h + w_h$ and $w_h\cdot y = h\blackdiamond y$ hold for all $h\in H$ and $y\in I$ if and only if $f(h,0)=f(0,h)=0$ for all $h\in H$.
\end{proposition}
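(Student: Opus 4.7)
The plan is a direct computation: substitute into the formula \eqref{formula para cdot en I times H} and simplify using the properties of $\blackdiamond$, $\Yleft$ and the identities $a \cdot 0 = 0$ and $0 \cdot a = a$ which hold in any linear cycle set.

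For $y \cdot w_h$ I would write it as $(y + w_0)\cdot(0+w_h)$ and specialize \eqref{formula para cdot en I times H} with the pair $(h,h')=(0,h)$. Since $0\blackdiamond y = y$ (Proposition~\ref{primeras propiedades}(2)), $0\blackdiamond 0 = 0$, $0\cdot h = h$ and $0\blackdiamond y \Yleft (0\cdot h) = y \Yleft h$, one gets
\[
y\cdot w_h = y\cdot 0 + y\cdot f(0,h) + y\Yleft h + w_h.
\]
Then $y\cdot 0 = 0$ (from $y\cdot(0+0) = y\cdot 0 + y\cdot 0$), so
\[
y\cdot w_h = y\cdot f(0,h) + y\Yleft h + w_h.
\]

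Similarly, writing $w_h\cdot y = (0+w_h)\cdot(y+w_0)$ and using \eqref{formula para cdot en I times H} with $(h,h')=(h,0)$, together with $h\blackdiamond 0 = 0$ (from Proposition~\ref{primeras propiedades}(1) with $y'=0$), $h\cdot 0 = 0$, $0\cdot(h\blackdiamond y) = h\blackdiamond y$, $0\cdot f(h,0) = f(h,0)$ and $(h\blackdiamond 0)\Yleft(h\cdot 0) = 0$, I obtain
\[
w_h\cdot y = h\blackdiamond y + f(h,0) + w_0 = h\blackdiamond y + f(h,0),
\]
since $w_0 = 0$.

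The conclusion is then immediate. If $f(0,h) = f(h,0) = 0$, both displayed identities collapse to the desired equalities. Conversely, assume the two equalities hold for all $y\in I$ and $h\in H$. Setting $y = 0$ in the first yields $f(0,h) = 0$ (using $0\Yleft h = 0$ and $0\cdot f(0,h) = f(0,h)$), and setting $y = 0$ in the second yields $f(h,0) = 0$ (using $h\blackdiamond 0 = 0$). There is no real obstacle here; the whole proof is bookkeeping with \eqref{formula para cdot en I times H} and Proposition~\ref{primeras propiedades}, and the only subtlety is to keep track of which $0$ lives in $H$ and which in $I$.
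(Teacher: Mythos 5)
Your proposal is correct and follows exactly the route the paper takes: the paper's proof is simply the observation that the claim follows from formula \eqref{formula para cdot en I times H} together with Proposition~\ref{primeras propiedades}, and your computation just writes out that substitution (specializing to $(0,h)$ and $(h,0)$, using $h\blackdiamond 0=0$, $0\blackdiamond y=y$, $a\cdot 0=0$, $0\cdot a=a$) in full detail. No gaps; the bookkeeping with the two zeros is handled correctly.
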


\begin{proof} This follows by equality~\eqref{formula para cdot en I times H} and Proposition~\ref{primeras propiedades}.
\end{proof}

In the rest of this section we assume that $f(h,0)=f(0,h)=0$ for all $h\in H$.  By this, and items (2) and (4) of Proposition~\ref{primeras propiedades}, the map $\iota$ is compatible with $\cdot$.

\begin{remark}\label{son biyectivas} We claim that the maps $y'+w_{h'} \mapsto (y+w_h)\cdot (y'+w_{h'})$ are permutations of $I\times H$ if and only if the maps $y'\mapsto h\blackdiamond y'$ are permutations of $I$. In fact $(y+w_h)\cdot (y'+w_{h'}) = y''+w_{h''}$ if and only if
$$
h\cdot h' = h''\quad\text{and}\quad (h\blackdiamond y)\cdot (h\blackdiamond y') = y'' - (h\blackdiamond y) \cdot f(h,h') + h\blackdiamond y \Yleft h\cdot h' - s(h\cdot h').
$$
Since $H$ and $I$ are linear cycle sets, there are unique $h'$ and $h\blackdiamond y'$ satisfying these identities. Now the claim follows immediately.
\end{remark}

Here and subsequently, we set
$$
y^{\beta}_{y,y',h,h'}\coloneqq y+y'+\beta(h,h')\qquad\text{and}\qquad  y^f_{y,y',h,h'}\coloneqq (h\blackdiamond y)\cdot (h\blackdiamond y') + (h\blackdiamond y) \cdot f(h,h') + h\blackdiamond y \Yleft h\cdot h'.
$$

\begin{proposition}\label{segunda equivalencia de (2.1)} The operation $\cdot$ of $I\times_{\beta} H$ satisfies equality~\eqref{compatibilidad cdot suma} if and only
\begin{align}
& h\blackdiamond  \beta(h',h'') + f(h,h'+h'') = f(h,h')+f(h,h'')+\beta(h\cdot h',h\cdot h'')\label{segunda equivalencia de (2.1)1}\\
\shortintertext{and}
& y\Yleft (h'+h'') = y\Yleft h' + y\Yleft h'' + \beta(h',h'') - y\cdot \beta(h',h''),\label{segunda equivalencia de (2.1)2}
\end{align}
for all $h,h',h''\in H$ and $y\in I$.
\end{proposition}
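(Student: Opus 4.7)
The plan is to verify the equivalence by directly expanding both sides of $a\cdot(b+c)=a\cdot b+a\cdot c$ with $a=y+w_h$, $b=y'+w_{h'}$ and $c=y''+w_{h''}$, using the formulas~\eqref{formula para suma en I times H} for the sum and~\eqref{formula para cdot en I times H} for the product, and then comparing the two expressions. On the left hand side I would first compute the inner sum as $y'+y''+\beta(h',h'')+w_{h'+h''}$ and then apply~\eqref{formula para cdot en I times H}; on the right hand side I would compute $(y+w_h)\cdot(y'+w_{h'})$ and $(y+w_h)\cdot(y''+w_{h''})$ separately and then add them, which via~\eqref{formula para suma en I times H} introduces the term $\beta(h\cdot h',h\cdot h'')$ from adding the $w$-parts.

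Expanding $h\blackdiamond(y'+y''+\beta(h',h''))$ by item~(1) of Proposition~\ref{primeras propiedades}, distributing the operation $\cdot$ of $I$ over sums (which is legal because $I$ is itself a linear cycle set, cf.~\eqref{compatibilidad cdot suma}), and using $h\cdot h'+h\cdot h''=h\cdot(h'+h'')$ in $H$ (again by~\eqref{compatibilidad cdot suma}), the terms $(h\blackdiamond y)\cdot(h\blackdiamond y')$, $(h\blackdiamond y)\cdot(h\blackdiamond y'')$ and $w_{h\cdot(h'+h'')}$ appear on both sides and cancel, reducing the desired identity to
\begin{equation*}
(h\blackdiamond y)\cdot(h\blackdiamond\beta(h',h''))+(h\blackdiamond y)\cdot f(h,h'+h'')+h\blackdiamond y\Yleft h\cdot(h'+h'')=(h\blackdiamond y)\cdot f(h,h')+(h\blackdiamond y)\cdot f(h,h'')+h\blackdiamond y\Yleft h\cdot h'+h\blackdiamond y\Yleft h\cdot h''+\beta(h\cdot h',h\cdot h'').
\end{equation*}
For necessity, I would specialize this reduced identity in two ways. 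Setting $y=0$ and using $h\blackdiamond 0=0$, $0\cdot z=z$ for $z\in I$, and $0\Yleft k=0$ (Proposition~\ref{primeras propiedades}(3)), all terms involving $\cdot$ and $\Yleft$ on the left of $(h\blackdiamond y)$ collapse and one obtains exactly~\eqref{segunda equivalencia de (2.1)1}. Setting instead $h=0$ and using $0\blackdiamond y=y$, $0\cdot k=k$ in $H$, $f(0,k)=0$, $\beta(0,k)=0$ (from~\eqref{normalidad y cociclo abeliano}) and $y\cdot 0=0$, one obtains exactly~\eqref{segunda equivalencia de (2.1)2}.

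For sufficiency, assume both~\eqref{segunda equivalencia de (2.1)1} and~\eqref{segunda equivalencia de (2.1)2}. I would use~\eqref{segunda equivalencia de (2.1)1} to rewrite $f(h,h'+h'')$ on the left hand side of the reduced identity and distribute $(h\blackdiamond y)\cdot$ over the resulting four-term sum via~\eqref{compatibilidad cdot suma} inside $I$; the term $(h\blackdiamond y)\cdot(h\blackdiamond\beta(h',h''))$ then cancels. Next I would apply~\eqref{segunda equivalencia de (2.1)2} with $y$ replaced by $h\blackdiamond y$ and $h',h''$ replaced by $h\cdot h',h\cdot h''$ to rewrite $(h\blackdiamond y)\Yleft(h\cdot h'+h\cdot h'')$; this produces another pair of cancelling terms and leaves exactly the right hand side of the reduced identity. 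The main obstacle is purely one of bookkeeping: keeping track of the seven terms on each side, identifying the correct common cancellations, and noticing that the crucial use of~\eqref{segunda equivalencia de (2.1)2} is not with the original arguments but with $h\blackdiamond y,\ h\cdot h',\ h\cdot h''$ in place of $y,h',h''$.
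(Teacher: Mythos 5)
Your proposal is correct and takes essentially the same route as the paper: you reduce \eqref{compatibilidad cdot suma} to the equality of the $I$-components (the paper's intermediate identity~\eqref{paso intermedio}), obtain \eqref{segunda equivalencia de (2.1)1} and \eqref{segunda equivalencia de (2.1)2} by the specializations $y=0$ and $h=0$, and for the converse use \eqref{segunda equivalencia de (2.1)1} together with \eqref{segunda equivalencia de (2.1)2} applied to $h\blackdiamond y$, $h\cdot h'$, $h\cdot h''$, which is precisely the paper's claim~\eqref{formula a usar}.
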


\begin{proof} By~\eqref{formula para suma en I times H} and~\eqref{formula para cdot en I times H}, we have
\begin{align*}
& (y+w_h) \cdot (y'+w_{h'}+y''+w_{h''}) %= (y+w_h)\cdot (y^{\beta}_{y',y'',h',h''}+ w_{h'+ h''})\\
%
%&\phantom{(y+w_h) \cdot (y'+w_{h'}+y''+w_{h''})}
= (h\blackdiamond y)\cdot (h\blackdiamond  y^{\beta}_{y',y'',h',h''})+ y^f_{y,0,h,h'+h''} + w_{h\cdot (h'+h'')}
\shortintertext{and}
&(y+w_h)\cdot (y'+w_{h'}) + (y+w_h) \cdot (y''+w_{h''}) %= y^f_{y,y',h,h'}  + w_{h\cdot h'} + y^f_{y,y'',h,h''}+ w_{h\cdot h''}\\
%
%& \phantom{(y+w_h)\cdot (y'+w_{h'}) + (y+w_h)\cdot (y''+w_{h''})}
= y^f_{y,y',h,h'} + y^f_{y,y'',h,h''} + \beta(h\cdot h',h\cdot h'') + w_{h\cdot h'+h\cdot h''}.
\end{align*}
Thus, since $H$ is a linear cycle set, the binary operation $\cdot$ of $I\times_{\beta} H$ satisfies equality~\eqref{compatibilidad cdot suma} if and only if
\begin{equation}\label{paso intermedio}
(h\blackdiamond y)\cdot (h\blackdiamond  y^{\beta}_{y',y'',h',h''})+ y^f_{y,0,h,h'+h''} = y^f_{y,y',h,h'} + y^f_{y,y'',h,h''} + \beta(h\cdot h',h\cdot h''),
\end{equation}
for all $y,y',y''\in I$ and $h,h',h''\in H$. Equality~\eqref{segunda equivalencia de (2.1)1} follows from this identity taking $y=y'=y''=0$,~while equality~\eqref{segunda equivalencia de (2.1)2} follows from the same identity taking $h=0$ and $y'=y''=0$. Conversely, assume that~Equal\-ities~\eqref{segunda equivalencia de (2.1)1} and~\eqref{segunda equivalencia de (2.1)2} hold. We claim that, for $h,h',h''\in H$ and $y\in Y$,
\begin{equation}\label{formula a usar}
(h \blackdiamond y)\cdot (h\blackdiamond \beta(h',h'')) + y^f_{y,0,h,h'+h''} = y^f_{y,0,h,h'} + y^f_{y,0,h,h''} +\beta(h\cdot h',h\cdot h'').
\end{equation}
In fact, by equality~\eqref{segunda equivalencia de (2.1)2}, we have
$$
h \blackdiamond y\Yleft (h \cdot (h'+ h'')) = h\blackdiamond y\Yleft h\cdot h' + h \blackdiamond y\Yleft h\cdot h'' + \beta(h\cdot h',h\cdot h'') - (h \blackdiamond y)\cdot \beta(h\cdot h',h\cdot h''),
$$
which shows that~\eqref{formula a usar} holds if and only if
$$
(h\blackdiamond y)\cdot (h\blackdiamond  \beta(h',h'')) + (h\blackdiamond y)\cdot f(h,h'+h'') - (h\blackdiamond y)\cdot \beta(h\cdot h',h\cdot h'') = (h\blackdiamond y)\cdot f(h,h') + (h\blackdiamond y)\cdot f(h,h'').
$$
Since this is a consequence of equality~\eqref{segunda equivalencia de (2.1)1}, the claim is true. The identity in~\eqref{paso intermedio} follows immediately from equality~\eqref{formula a usar} and the fact that $h\blackdiamond  (y'+y'') = h\blackdiamond y' + h\blackdiamond y''$.
\end{proof}

\begin{proposition}\label{equivalencia de (a+b).c = (a.b).(a.c)} Assume that the binary operation $\cdot$ of $I\times_{\beta} H$ satisfies con\-di\-tion~\eqref{compatibilidad cdot suma}. Then $\cdot$ satisfies~con\-di\-tion~\eqref{compatibilidad suma cdot} if and only if, for all $y,y',y''\in I$ and $h,h',h''\in H$,
\begin{align}\label{condicion equivalente compatibilidad suma cdot con y}
&((h+h') \blackdiamond y^{\beta}_{y,y',h,h'})\cdot ((h+h')\blackdiamond y'') = ((h\cdot h')\blackdiamond y^f_{y,y',h,h'})\cdot ((h\cdot h')\blackdiamond  ((h\blackdiamond y)\cdot (h\blackdiamond y'')))
\shortintertext{and}
&\begin{aligned}\label{condicion equivalente compatibilidad suma cdot con h}
& ((h+h') \blackdiamond y^{\beta}_{y,y',h,h'})  \cdot f(h+h',h'') +  (h+h')\blackdiamond y^{\beta}_{y,y',h,h'}\Yleft (h+h')\cdot h''\\
& = ((h\cdot h')\blackdiamond y^f_{y,y',h,h'})\cdot \bigl((h\cdot h')\blackdiamond y^f_{y,0,h,h''} + f(h\cdot h',h\cdot h'')\bigr) + (h\cdot h')\blackdiamond y^f_{y,y',h,h'}\Yleft (h\cdot h')\cdot(h\cdot h''). 		
\end{aligned}
\end{align}
\end{proposition}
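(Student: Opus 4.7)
The plan is to reduce the identity~\eqref{compatibilidad suma cdot} on $I\times_\beta H$ to the two displayed equations by direct expansion, in the spirit of the proof of Proposition~\ref{segunda equivalencia de (2.1)}. I would set $a = y+w_h$, $b = y'+w_{h'}$, and $c = y''+w_{h''}$, and use~\eqref{formula para suma en I times H} and~\eqref{formula para cdot en I times H} to write both $(a+b)\cdot c$ and $(a\cdot b)\cdot (a\cdot c)$ in the form (element of $I$) $+\,w_{(-)}$. Since $H$ is a linear cycle set, the $w$-components of the two sides agree automatically (both equal $w_{(h\cdot h')\cdot (h\cdot h'')}=w_{(h+h')\cdot h''}$), so the condition~\eqref{compatibilidad suma cdot} collapses to a single equality $(\star)$ between their $I$-parts. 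A mild subtlety is that on the right-hand side one must first unfold $a\cdot b = y^f_{y,y',h,h'}+w_{h\cdot h'}$ and $a\cdot c = (h\blackdiamond y)\cdot (h\blackdiamond y'') + y^f_{y,0,h,h''}+w_{h\cdot h''}$ before applying~\eqref{formula para cdot en I times H} a second time.

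Next, using item~(1) of Proposition~\ref{primeras propiedades} together with the distributivity of $\cdot$ over $+$ on $I$, I would distribute the outer factor $((h\cdot h')\blackdiamond y^f_{y,y',h,h'})\cdot (-)$ over $(h\cdot h')\blackdiamond \bigl[(h\blackdiamond y)\cdot (h\blackdiamond y'') + y^f_{y,0,h,h''}\bigr]$ and over the further summand $f(h\cdot h',h\cdot h'')$. After this expansion $(\star)$ splits transparently as a $y''$-dependent piece (which is precisely~\eqref{condicion equivalente compatibilidad suma cdot con y}) plus a $y''$-independent piece (which is precisely~\eqref{condicion equivalente compatibilidad suma cdot con h}). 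In particular, the converse direction of the proposition is obtained by simply adding these two equations back together.

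For the forward direction it suffices to extract each summand by a separate specialization. Setting $h''=0$ kills every term that contains $f(-,0)$, $\Yleft 0$, $h\blackdiamond 0$, or $y^f_{-,0,-,0}$, by the normalizations $f(h,0)=0$ and items~(2)--(4) of Proposition~\ref{primeras propiedades}, together with the identity $(h\blackdiamond y)\cdot 0 = 0$; what survives of $(\star)$ is exactly~\eqref{condicion equivalente compatibilidad suma cdot con y}. Setting instead $y''=0$ annihilates the $h\blackdiamond y''$-factors on both sides of $(\star)$ and leaves exactly~\eqref{condicion equivalente compatibilidad suma cdot con h}. The argument is long but mechanical; the only real care is to track, during the expansion, which terms depend on $y''$, so that each monomial of $(\star)$ is matched with its correct partner in one of the two displayed equations and nothing is double counted.
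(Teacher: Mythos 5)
Your proposal is correct and is essentially the paper's own argument: expanding both sides of~\eqref{compatibilidad suma cdot} in coordinates and then specializing $h''=0$ and $y''=0$ is exactly the paper's reduction to the two cases $c=y''\in I$ and $c=w_{h''}$, and your converse step of adding the two identities back together is the same recombination, carried out via distributivity of $\cdot$ over $+$ in $I$ and additivity of $\blackdiamond$.
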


\begin{proof} A direct computation using~\eqref{compatibilidad cdot suma} proves that condition~\eqref{compatibilidad suma cdot} is satisfied if and only if, for all $y,y',y''\in I$ and $h,h',h''\in H$,
\begin{align}
&(y+ w_h + y'+w_{h'})\cdot y'' = ((y+ w_h)\cdot (y'+w_{h'}))\cdot ((y+ w_h)\cdot y'')\label{compatibilidad suma cdot con y}
\shortintertext{and}
&(y+ w_h + y'+w_{h'})\cdot w_{h''} = ((y+ w_h)\cdot (y'+w_{h'}))\cdot ((y+ w_h)\cdot w_{h''})\label{compatibilidad suma cdot con h}.
\end{align}
So, the result follows since, by the very definitions of the operations in $I\times_{\beta} H$, the fact that $h\blackdiamond 0 = 0$ and condition~\eqref{compatibilidad cdot suma}, conditions~\eqref{compatibilidad suma cdot con y} and~\eqref{compatibilidad suma cdot con h} are satisfied if and only if conditions~\eqref{condicion equivalente compatibilidad suma cdot con y} and~\eqref{condicion equivalente compatibilidad suma cdot con h} are.
\begin{comment} prueba de lo afirmado
\begin{align*}
(y  + w_h & + y'+w_{h'})\cdot y'' = (y^{\beta}_{y,y',h,h'} + w_{h+h'})\cdot y'' = ((h+h')\blackdiamond y^{\beta}_{y,y',h,h'})\cdot ((h+h')\blackdiamond y'')
\end{align*}
\begin{align*}
((y+ w_h)\cdot (y'+w_{h'}))\cdot ((y+ w_h)\cdot y'') = \bigl(y^f_{y,y',h,h'} + w_{h\cdot h'}\bigr)\cdot \bigl((h\blackdiamond y)\cdot (h\blackdiamond y'')\bigr)
\end{align*}
\begin{align*}
(y  + w_h & + y'+w_{h'})\cdot w_{h''} = (y^{\beta}_{y,y',h,h'} +w_{h+h'})\cdot w_{h''} \\
%
& =	((h+h')\blackdiamond y^{\beta}_{y,y',h,h'})\cdot f(h+h',h'') +  (h+h')\blackdiamond (y^{\beta}_{y,y',h'h})\Yleft (h+h')\cdot h'' + w_{(h+h')\cdot h''}
\end{align*}
and
\begin{align*}
((y+ w_h) &\cdot (y'+w_{h'}))\cdot ((y+ w_h)\cdot w_{h''}) =   (y^f_{y,y',h,h'} + w_{h\cdot h'})\cdot (y^f_{y,0,h,h''} + w_{h\cdot h''})\\
%
& = ((h\cdot h')\blackdiamond y^f_{y,y',h,h'})\cdot ((h\cdot h')\blackdiamond y^f_{y,0,h,h''})
+ ((h\cdot h')\blackdiamond y^f_{y,y',h,h'})\cdot f(h\cdot h',h\cdot h'')\\
%
&+ (h\cdot h')\blackdiamond y^f_{y,y',h,h'}\Yleft (h\cdot h')\cdot(h\cdot h'') + w_{(h\cdot h')\cdot(h\cdot h'')}.
\end{align*}
\end{comment}
\end{proof}

\begin{remark}\label{caso particular de condicion equivalente compatibilidad suma cdot con h} Identity~\eqref{condicion equivalente compatibilidad suma cdot con h} with $h=h'=0$ becomes $(y+y')\Yleft h'' = (y\cdot y')\cdot (y\Yleft h'') + y\cdot y' \Yleft h''.$
\end{remark}

Recall here that we are assuming that $y\cdot w_h = y\Yleft h + w_h$ and  $w_h\cdot y = h\blackdiamond y$ for all $h\in H$ and $y\in I$.

\begin{proposition}\label{caracterizacion de extensiones} Under the conditions at the beginning of this section,  the abelian group $I\times_{\beta} H$ is a linear cycle set via \eqref{formula para cdot en I times H} and \eqref{extension} is an extension of linear cycle sets  if and only if the maps $y\mapsto h\blackdiamond y$ are permutations of $I$ and identities~\eqref{segunda equivalencia de (2.1)1}, \eqref{segunda equivalencia de (2.1)2}, \eqref{condicion equivalente compatibilidad suma cdot con y} and~\eqref{condicion equivalente compatibilidad suma cdot con h} are satisfied.
\end{proposition}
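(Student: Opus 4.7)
The plan is to assemble the proposition from the three preceding results in the section. A linear cycle set structure on $I\times_\beta H$ requires three things: bijectivity of the left translations, the distributivity law~\eqref{compatibilidad cdot suma}, and the cycle set law~\eqref{compatibilidad suma cdot}. Each of these has already been characterized separately, so the heart of the proof is to glue them together and then verify that $\iota$ and $\pi$ are linear cycle set morphisms.

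Concretely, I would argue as follows. For $(\Leftarrow)$: Remark~\ref{son biyectivas} shows that the maps $y'+w_{h'}\mapsto (y+w_h)\cdot (y'+w_{h'})$ are permutations of $I\times H$ exactly when the maps $y\mapsto h\blackdiamond y$ are permutations of~$I$; Proposition~\ref{segunda equivalencia de (2.1)} tells us that \eqref{segunda equivalencia de (2.1)1} and \eqref{segunda equivalencia de (2.1)2} are equivalent to \eqref{compatibilidad cdot suma}; and Proposition~\ref{equivalencia de (a+b).c = (a.b).(a.c)}, applied after we already know \eqref{compatibilidad cdot suma} holds, shows that \eqref{condicion equivalente compatibilidad suma cdot con y} and \eqref{condicion equivalente compatibilidad suma cdot con h} are equivalent to \eqref{compatibilidad suma cdot}. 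Since the sum on $I\times_\beta H$ is abelian by \eqref{condicion de cociclo} and \eqref{normalidad y cociclo abeliano}, and the operation $\cdot$ is well-defined by~\eqref{formula para cdot en I times H}, this gives the linear cycle set structure. The direction $(\Rightarrow)$ is immediate from the same three results since bijectivity of left translations and both compatibility conditions are part of the linear cycle set axioms.

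It remains to check that $\iota$ and $\pi$ are linear cycle set morphisms. The map $\pi\colon I\times_\beta H\to H$ sending $y+w_h\mapsto h$ is clearly additive, and since the $w_{h\cdot h'}$ term in~\eqref{formula para cdot en I times H} is the only contribution to the $H$-component, we have $\pi((y+w_h)\cdot(y'+w_{h'}))=h\cdot h'=\pi(y+w_h)\cdot\pi(y'+w_{h'})$. For $\iota\colon I\to I\times_\beta H$, additivity follows from $\beta(0,0)=0$ (by~\eqref{normalidad y cociclo abeliano}), while applying~\eqref{formula para cdot en I times H} with $h=h'=0$, together with $0\blackdiamond y=y$ (Proposition~\ref{primeras propiedades}(2)), $y\Yleft 0=0$ (Proposition~\ref{primeras propiedades}(4)) and $f(0,0)=0$, reduces the product of $\iota(y)$ and $\iota(y')$ to $y\cdot y'$ in $I$, as desired.

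I do not anticipate any real obstacle: everything beyond the three preceding statements is bookkeeping. The only point that deserves care is the order in which the two compatibility axioms are handled, because Proposition~\ref{equivalencia de (a+b).c = (a.b).(a.c)} is stated under the hypothesis that \eqref{compatibilidad cdot suma} already holds; in the $(\Leftarrow)$ direction this means one first derives \eqref{compatibilidad cdot suma} from \eqref{segunda equivalencia de (2.1)1}--\eqref{segunda equivalencia de (2.1)2} via Proposition~\ref{segunda equivalencia de (2.1)}, and only then invokes Proposition~\ref{equivalencia de (a+b).c = (a.b).(a.c)} to upgrade \eqref{condicion equivalente compatibilidad suma cdot con y}--\eqref{condicion equivalente compatibilidad suma cdot con h} to \eqref{compatibilidad suma cdot}.
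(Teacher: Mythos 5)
Your proposal is correct and follows the same route as the paper, which proves the proposition as an immediate consequence of Remark~\ref{son biyectivas} and Propositions~\ref{segunda equivalencia de (2.1)} and~\ref{equivalencia de (a+b).c = (a.b).(a.c)} (the compatibility of $\iota$ and $\pi$ with the operations being handled in the discussion preceding the statement, exactly as in your bookkeeping step). Your remark about first establishing~\eqref{compatibilidad cdot suma} before invoking Proposition~\ref{equivalencia de (a+b).c = (a.b).(a.c)} is the right point of care and matches the paper's logical order.
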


\begin{proof} This is an immediate consequence of Remark~\ref{son biyectivas} and Propositions~\ref{segunda equivalencia de (2.1)} and~\ref{equivalencia de (a+b).c = (a.b).(a.c)}.
\end{proof}

\begin{notation}\label{notacion para la estructura} The linear cycle set with underlying abelian group $I\times_{\beta} H$, constructed in the previous proposition, will be denoted $I\times_{\beta,f}^{\blackdiamond,\Yleft} H$. We will use the same notation for $I\times H$, endowed with the operations~$+$ and~$\cdot$ defined by~\eqref{formula para suma en I times H} and~\eqref{formula para cdot en I times H}, also when $I\times_{\beta,f}^{\blackdiamond,\Yleft} H$ is not a linear cycle set.
\end{notation}

\begin{remark}\label{toda extension es equivalente a una...} Let $I$ and $H$ be linear cycle sets, let $(\iota,B,\pi)$ be an extension of $H$ by $I$ and let $s$ be a set theoretic section of $\pi$, such that $s(0)=0$. Let $\beta$, $\blackdiamond$, $\Yleft$ and $f$ be the maps constructed in~\eqref{construccion de beta} and subsection~\ref{formula for cdot}. Consider the short exact sequence of abelian groups~\eqref{extension}, with $I\times_{\beta} H$ endowed with the operation $\cdot$, introduced above Proposition~\ref{segundas propiedades}. The map $\phi\colon I\times_{\beta} H\to B$, defined by $\phi(y+w_h)\coloneqq \iota(y)+s(h)$ is a bijection compatible with $+$ and $\cdot$. Hence~\eqref{extension} is an extension of linear cycle sets and $\phi$ is an equivalence of extensions.
\end{remark}

\begin{proposition}\label{equivalecia entre extensiones producto} Let $I$ and $H$ be linear cycle sets. Two extensions $(\iota,I\times_{\beta,f}^{\blackdiamond,\Yleft} H,\pi)$ and $(\iota,I\times_{\beta',f'}^{\blackdiamond',\Yleft'} H,\pi)$, of $H$ by $I$, are equivalent if and only if there exists a map $\varphi\colon H\to I$ satisfying~\eqref{equivalencia a nivel de qrupos} and the following conditions
\begin{align}
& h\blackdiamond \varphi(h)\in \Soc(I)\quad\text{for all $h\in H$,}\label{primera condicion}\\
& \varphi(h) = y\cdot \varphi(h)\quad\text{ for all $y\in I$ and $h\in H$,}\label{segunda condicion}\\
&\varphi(h\cdot h')= h\blackdiamond \varphi(h') + h\blackdiamond \varphi(h)\Yleft h\cdot h'\quad\text{for all $h,h'\in H$.}\label{tercera condicion}
\end{align}
In this case the map $\phi\colon I\times_{\beta,f}^{\blackdiamond,\Yleft} H\to I\times_{\beta',f'}^{\blackdiamond',\Yleft'} H$ performing the equivalence is given by $\phi(y+w_h)= y+\varphi(h) + w_h$.
\end{proposition}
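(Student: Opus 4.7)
The plan is to prove both implications through the explicit candidate map $\phi(y+w_h)\coloneqq y+\varphi(h)+w_h$, thereby reducing the question of equivalence to when this specific bijection of sets is a morphism of linear cycle sets.

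For the sufficiency direction, assume that $\varphi\colon H\to I$ satisfies the four listed conditions. The classical abelian-group argument recalled above~\eqref{equivalencia a nivel de qrupos}, using only $\varphi(0)=0$ and the cocycle identity, shows that $\phi$ is a bijection preserving $+$, and by construction $\phi\xcirc\iota=\iota'$ and $\pi'\xcirc\phi=\pi$. The substantial task is verifying that $\phi$ also preserves $\cdot$. I would expand both $\phi\bigl((y+w_h)\cdot(y'+w_{h'})\bigr)$ and $\phi(y+w_h)\cdot\phi(y'+w_{h'})$ via~\eqref{formula para cdot en I times H}, then distribute using $h\blackdiamond(a+b)=h\blackdiamond a+h\blackdiamond b$ together with~\eqref{compatibilidad cdot suma}. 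The three remaining conditions then match terms: \eqref{segunda condicion} collapses every factor of the form $y\cdot\varphi(h'')$ to $\varphi(h'')$; \eqref{primera condicion} makes $h\blackdiamond\varphi(h)$ act trivially through $\Soc(I)$, removing the $(h\blackdiamond\varphi(h))\cdot(-)$ prefactors; and \eqref{tercera condicion} absorbs the discrepancy between $\varphi(h\cdot h')$ on the target side and the combination of $\varphi(h)$ and $\varphi(h')$ produced on the source side.

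For the converse, from $\phi\xcirc\iota=\iota'$ one deduces that $\phi$ is the identity on $I$, and from $\pi'\xcirc\phi=\pi$ that $\phi(w_h)$ projects to $h$, so there is a unique map $\varphi\colon H\to I$ with $\phi(w_h)=\varphi(h)+w_h$ and $\varphi(0)=0$. Additivity of $\phi$ then forces $\phi(y+w_h)=y+\varphi(h)+w_h$ and yields~\eqref{equivalencia a nivel de qrupos}. I would extract the remaining conditions by testing $\phi$ on three carefully chosen families of products. Applying $\phi$ to $y\cdot w_h=y\Yleft h+w_h$ and comparing with the expansion of $y\cdot(\varphi(h)+w_h)$ via~\eqref{compatibilidad cdot suma} yields~\eqref{segunda condicion} (together with $\Yleft=\Yleft'$ as a by-product). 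Applying $\phi$ to $w_h\cdot y=h\blackdiamond y$ and expanding $(\varphi(h)+w_h)\cdot y$ via~\eqref{compatibilidad suma cdot}, and using that $y\mapsto h\blackdiamond' y$ is a bijection of $I$, forces $h\blackdiamond'\varphi(h)\in\Soc(I)$ and hence, by comparing on arbitrary inputs, also $\blackdiamond=\blackdiamond'$; this gives~\eqref{primera condicion}. Finally, applying $\phi$ to $w_h\cdot w_{h'}=f(h,h')+w_{h\cdot h'}$ and comparing with $\phi(w_h)\cdot\phi(w_{h'})$ via~\eqref{formula para cdot en I times H} produces, after cancellation using the two previous conditions, the relation~\eqref{tercera condicion}.

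The main obstacle is the bookkeeping in the sufficiency direction. Formula~\eqref{formula para cdot en I times H} already has four summands per product, and expanding $(y+\varphi(h)+w_h)\cdot(y'+\varphi(h')+w_{h'})$ introduces many cross terms together with cocycle contributions involving $\beta'$ and $f'$ that must be rewritten in terms of $\beta$ and $f$ through~\eqref{equivalencia a nivel de qrupos}. Organising the computation so that each of the three non-trivial conditions on $\varphi$ is invoked exactly once, and so that the two sides manifestly coincide, is the delicate step. A useful preliminary reduction is to verify the compatibility with $\cdot$ separately in the subcases where one of $(y,h)$, $(y',h')$ lies in $I$ or is of the form $w_{h''}$, which, via~\eqref{compatibilidad cdot suma} and~\eqref{compatibilidad suma cdot}, reduces the general identity to the three simpler pieces already made explicit in Subsection~\ref{formula for cdot}.
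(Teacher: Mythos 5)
Your overall route is the paper's own: reduce everything to the canonical map $\phi(y+w_h)=y+\varphi(h)+w_h$ furnished by the abelian-group discussion around \eqref{equivalencia a nivel de qrupos}, and then characterize when this $\phi$ respects $\cdot$ by testing it separately on products with elements of $I$ and with the elements $w_{h'}$, obtaining \eqref{primera condicion} from the first family and \eqref{segunda condicion}, \eqref{tercera condicion} from the second by specializing $h=0$ and $y=0$ (the paper packages the second family as the single identity \eqref{segunda y tercera} and then checks that \eqref{segunda condicion} and \eqref{tercera condicion} imply it back). So the decomposition and the key computations are the same as in the paper.

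However, several of the specific bridges you invoke would not go through as described. In the sufficiency direction you propose to rewrite the $\beta'$- and $f'$-contributions of the target product ``in terms of $\beta$ and $f$ through \eqref{equivalencia a nivel de qrupos}''; but \eqref{equivalencia a nivel de qrupos} controls only $\beta-\beta'$, and neither it nor \eqref{primera condicion}--\eqref{tercera condicion} says anything about $f'$, $\blackdiamond'$ or $\Yleft'$, so nothing is available to convert these primed data into unprimed ones. The paper's proof never meets this obstacle because it computes both sides with the unprimed $\blackdiamond$, $\Yleft$ and $f$ throughout; the identifications $\blackdiamond=\blackdiamond'$ and $\Yleft=\Yleft'$ are only recorded afterwards, in Remark~\ref{Yleft= Yleft' y blackdiamond=blackdiamond'}, as consequences of the stated conditions. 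The same mismatch affects your necessity tests: applying $\phi$ to $y\cdot w_h$ gives only the combined identity $y\Yleft h+\varphi(h)=y\cdot\varphi(h)+y\Yleft' h$, which does not by itself split into \eqref{segunda condicion} and $\Yleft=\Yleft'$; applying it to $w_h\cdot y$ gives $h\blackdiamond y=(h\blackdiamond'\varphi(h))\cdot(h\blackdiamond' y)$, and bijectivity of $y\mapsto h\blackdiamond' y$ only turns this into the statement that left translation by $h\blackdiamond'\varphi(h)$ coincides with the composite of $y\mapsto h\blackdiamond y$ with the inverse of $y\mapsto h\blackdiamond' y$, not into \eqref{primera condicion}; and the test on $w_h\cdot w_{h'}$, carried out honestly with $f'$ in place, yields \eqref{tercera condicion} only up to the difference $f'-f$. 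To complete your plan you would have to either argue beforehand that the primed structure maps agree with the unprimed ones (and account for $f'-f$), or adopt the paper's convention of computing with the unprimed maps on both sides --- precisely the point your write-up leaves unresolved.
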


\begin{proof} By the discussion above Definition~\ref{extensiones de linear cycle sets} we know that $(\iota,I\times_{\beta,f}^{\blackdiamond, \Yleft} H, \pi)$ and $(\iota,I\times_{\beta',f'}^{\blackdiamond',\Yleft'} H,\pi)$ are equivalent as abelian group extensions if and only if there exists a map $\varphi\colon H\to I$ satisfying~\eqref{equivalencia a nivel de qrupos}. Moreover the map $\phi\colon I\times_{\beta,f}^{\blackdiamond,\Yleft} H\to I\times_{\beta',f'}^{\blackdiamond',\Yleft'} H$, defined by $\phi(y+w_h)\coloneqq y+\varphi(h) + w_h$, performs the equivalence. Since
$$
\phi\bigl((y+w_h)\cdot y'\bigr)= (h\blackdiamond y)\cdot (h\blackdiamond y')\quad\text{and}\quad \phi(y+w_h)\cdot \phi(y') = ((h\blackdiamond y)\cdot (h\blackdiamond\varphi(h)))\cdot ((h\blackdiamond y)\cdot (h\blackdiamond y')),
$$	
identity $\phi\bigl((y+w_h)\cdot y'\bigr)= \phi(y+w_h)\cdot \phi(y')$ is fulfilled if and only if
$$
((h\blackdiamond y)\cdot (h\blackdiamond\varphi(h)))\cdot ((h\blackdiamond y)\cdot (h\blackdiamond y')) = (h\blackdiamond y)\cdot (h\blackdiamond y') \quad\text{for all $y,y'\in I$ and $h\in H$,}
$$	
or in other words, if and only if $(h\blackdiamond y)\cdot (h\blackdiamond\varphi(h))\in \Soc(I)$ for all $h\in H$ and $y\in I$. But this occurs if and only if $h\blackdiamond\varphi(h)\in \Soc(H)$ for all $h\in H$, because  $(h\blackdiamond 0)\cdot (h\blackdiamond\varphi(h)) = h\blackdiamond\varphi(h)$ and $\Soc(I)$ is an ideal. Assume now that condition~\eqref{primera condicion} is satisfied. A direct computation shows that
\begin{align*}
&\phi\bigl((y+w_h)\cdot w_{h'}\bigr) = (h\blackdiamond y) \cdot f(h,h') + h\blackdiamond y 	\Yleft h\cdot h' +\varphi(h\cdot h') + w_{h\cdot h'}
\shortintertext{and}
& \phi(y+w_h)\cdot \phi(w_{h'}) = (h\blackdiamond (y+\varphi(h)))\cdot (h\blackdiamond \varphi(h') +  f(h,h')) + h\blackdiamond (y+\varphi(h)) \Yleft h\cdot h' + w_{h\cdot h'}\\
& \phantom{\phi(y+w_h)\cdot \phi(w_{h'})} =  (h\blackdiamond y+ h\blackdiamond \varphi(h))\cdot (h\blackdiamond \varphi(h') +  f(h,h')) + (h\blackdiamond y+h\blackdiamond \varphi(h)) \Yleft h\cdot h' + w_{h\cdot h'}\\
& \phantom{\phi(y+w_h)\cdot \phi(w_{h'})} = (h\blackdiamond y)\cdot (h\blackdiamond \varphi(h')) + (h\blackdiamond y)\cdot f(h,h') + (h\blackdiamond y)\cdot (h\blackdiamond \varphi(h) \Yleft h\cdot h') + h\blackdiamond y \Yleft h\cdot h'  + w_{h\cdot h'},
\end{align*}
where for the last equality we have used~\eqref{compatibilidad cdot suma} and~\eqref{compatibilidad suma cdot}, that $h\blackdiamond \varphi(h)\in \Soc(I)$ and Remark~\ref{caso particular de condicion equivalente compatibilidad suma cdot con h}. Therefore, the ident\-ity $\phi\bigl((y+w_h)\cdot w_{h'}\bigr)= \phi(y+w_h)\cdot \phi(w_{h'})$ is fulfilled if and only if
\begin{equation}\label{segunda y tercera}
\varphi(h\cdot h')=(h\blackdiamond y)\cdot (h\blackdiamond \varphi(h')) + (h\blackdiamond y)\cdot (h\blackdiamond \varphi(h)\Yleft h\cdot h') \quad\text{for all $y\in I$ and $h,h'\in H$.}
\end{equation}
Taking $h=0$ in this equality we obtain condition~\eqref{segunda condicion}; while taking $y=0$, we obtain
condition~\eqref{tercera condicion}.~Re\-cip\-rocally, these conditions imply equality~\eqref{segunda y tercera}.
\end{proof}	

\begin{remark}\label{Yleft= Yleft' y blackdiamond=blackdiamond'} Let $\phi\colon I\times_{\beta,f}^{\blackdiamond,\Yleft} H\to I \times_{\beta',f'}^{\blackdiamond',\Yleft'} H$ be as in Proposition~\ref{equivalecia entre extensiones producto} and let $y\in I$ and $h\in H$. By the definition of $\phi$, and identities~\eqref{compatibilidad cdot suma}, \eqref{compatibilidad suma cdot}, ~\eqref{primera condicion} and~\eqref{segunda condicion}, we have
\begin{align*}
& y\Yleft h=\phi(y\Yleft h)= \phi(y\cdot w_h - w_h)= \phi(y)\cdot \phi(w_h) - \phi(w_h)= y\cdot (\varphi(h)+w_h) - (\varphi(h)+w_h)=y\Yleft' h\\
\shortintertext{and}
& h\blackdiamond y =\phi(h\blackdiamond y) = \phi(w_h)\cdot \phi(y)= (w_h + \varphi(h)) \cdot y=(w_h \cdot \varphi(h)) \cdot (w_h\cdot y) = (h\blackdiamond' \varphi(h))\cdot (h\blackdiamond' y)=h\blackdiamond' y,
\end{align*}
Thus, $\Yleft = \Yleft'$ and $\blackdiamond=\blackdiamond'$.
\end{remark}

%\begin{remark}\label{Yleft= Yleft' y blackdiamond=blackdiamond'}
%By Proposition~\ref{equivalecia entre extensiones producto}, the fact that $\phi$ is a linear cycle set isomorphism that it is the identity on $I$,  Remark~\ref{varphi cae en el semicentro} and identities~\eqref{compatibilidad cdot suma} and~\eqref{compatibilidad suma cdot}, we have
%$$
%y\Yleft h=\phi(y\Yleft h)= \phi(y\cdot w_h - w_h)= \phi(y)\cdot \phi(w_h) - \phi(w_h)= y\cdot (\varphi(h)+w_h) - (\varphi(h)+w_h)=y\Yleft' h
%$$
%and
%$$
%h\blackdiamond y =\phi(h\blackdiamond y) = \phi(w_h)\cdot \phi(y)= (w_h + \varphi(h)) \cdot y=(w_h \cdot \varphi(h)) \cdot (w_h\cdot y)= (h\blackdiamond \varphi(h))\cdot (h\blackdiamond y)=h\blackdiamond y.
%$$
%Thus, $\Yleft = \Yleft'$ and $\blackdiamond=\blackdiamond'$.
%\end{remark}

\section{Extensions with central cocycles}\label{Extensions with central cocycles}
In addition to the conditions at the beginning of Section~\ref{Building extensions of linear cycle sets}, throughout this section we assume that $h\blackdiamond (y\Yleft h')$, $h\blackdiamond \beta(h',h'')$ and $h\blackdiamond f(h',h'')$ belong to $Z(I)$ for all $h,h',h''\in H$ and $y\in I$.

\begin{remark}\label{equivalencia de [3.3]} Under the previous assumptions, equality~\eqref{segunda equivalencia de (2.1)2} becomes
\begin{equation}\label{segunda equivalencia de (2.1)2 cociclos en el centro}
y\Yleft (h'+h'') = y\Yleft h' + y\Yleft h''.
\end{equation}
\end{remark}

\begin{lemma}\label{condicion [3.6]} Assume that $I\times_{\beta} H$ satisfies condition~\eqref{compatibilidad cdot suma}. Then~\eqref{condicion equivalente compatibilidad suma cdot con y} holds if and only if
\begin{equation}\label{equivalente a [3.6]}
(h\blackdiamond y)\cdot (h\blackdiamond y')= h\blackdiamond (y\cdot y')\qquad\text{and}\qquad (h+h')\blackdiamond y = (h\cdot h') \blackdiamond (h\blackdiamond y)
\end{equation}
for all $h,h'\in H$ and $y,y'\in I$.
\end{lemma}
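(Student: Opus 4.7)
The overall plan is to prove the forward implication by judicious specialization of \eqref{condicion equivalente compatibilidad suma cdot con y} and the reverse one by fully expanding both sides and absorbing the central error terms. The key mechanical tool is the following consequence of the cycle set axiom \eqref{compatibilidad suma cdot} in $I$: for $z\in Z(I)$ and $a,b\in I$ one has $(a+z)\cdot b = (a\cdot z)\cdot(a\cdot b) = z\cdot(a\cdot b) = a\cdot b$, so any central summand appearing inside the first factor of a $\cdot$-product can be deleted. I will use this throughout, together with the identities $0\blackdiamond y = y$, $h\blackdiamond 0 = 0$, $0\Yleft h = 0$, $\beta(0,h')=f(0,h')=0$, $0\cdot h=h$ in $H$ and $a\cdot 0=0$ in $I$.

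For the direction $\eqref{condicion equivalente compatibilidad suma cdot con y}\Rightarrow\eqref{equivalente a [3.6]}$, set $h=0$ and $y'=0$. A direct computation gives $y^{\beta}_{y,0,0,h'}=y$ and $y^{f}_{y,0,0,h'}=y\Yleft h'$, so \eqref{condicion equivalente compatibilidad suma cdot con y} collapses to
$(h'\blackdiamond y)\cdot(h'\blackdiamond y'') = (h'\blackdiamond(y\Yleft h'))\cdot(h'\blackdiamond(y\cdot y''))$.
Since $h'\blackdiamond(y\Yleft h')\in Z(I)$ by hypothesis, the first factor on the right can be deleted, yielding the first equality of \eqref{equivalente a [3.6]} after relabeling. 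Next, set $y=y'=0$: then $y^{\beta}_{0,0,h,h'}=\beta(h,h')$ and $y^{f}_{0,0,h,h'}=f(h,h')$, and \eqref{condicion equivalente compatibilidad suma cdot con y} reads
$((h+h')\blackdiamond\beta(h,h'))\cdot((h+h')\blackdiamond y'') = ((h\cdot h')\blackdiamond f(h,h'))\cdot((h\cdot h')\blackdiamond(h\blackdiamond y''))$.
Both leading factors lie in $Z(I)$ by hypothesis, so this is precisely the second equality of \eqref{equivalente a [3.6]}.

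For the converse, I assume both equalities of \eqref{equivalente a [3.6]} and expand the two sides of \eqref{condicion equivalente compatibilidad suma cdot con y}. On the left, additivity of $\blackdiamond$ gives $(h+h')\blackdiamond y^{\beta}_{y,y',h,h'} = (h+h')\blackdiamond y + (h+h')\blackdiamond y' + (h+h')\blackdiamond\beta(h,h')$, whose last summand is central; after dropping it, apply \eqref{compatibilidad suma cdot}, then the second equality to replace each $(h+h')\blackdiamond(-)$ by $(h\cdot h')\blackdiamond(h\blackdiamond(-))$, and finally the first equality to merge the resulting pairs. The left side thus becomes
$((h\cdot h')\blackdiamond((h\blackdiamond y)\cdot(h\blackdiamond y')))\cdot((h\cdot h')\blackdiamond((h\blackdiamond y)\cdot(h\blackdiamond y''))).$
On the right, expanding $y^{f}_{y,y',h,h'}$ and distributing $\blackdiamond$ over $+$ produces three summands: $(h\cdot h')\blackdiamond((h\blackdiamond y)\cdot(h\blackdiamond y'))$, $(h\cdot h')\blackdiamond((h\blackdiamond y)\cdot f(h,h'))$, and $(h\cdot h')\blackdiamond((h\blackdiamond y)\Yleft(h\cdot h'))$. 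The third is central by hypothesis; the second, using the already-assumed first equality, rewrites as $((h\cdot h')\blackdiamond(h\blackdiamond y))\cdot((h\cdot h')\blackdiamond f(h,h'))$, and since $(h\cdot h')\blackdiamond f(h,h')\in Z(I)$ this product equals $(h\cdot h')\blackdiamond f(h,h')$, hence is central too. Deleting both central summands and simplifying the remaining second factor of the right side via the first equality yields the same expression obtained for the left side.

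The one step where care is needed is recognizing that $(h\cdot h')\blackdiamond((h\blackdiamond y)\cdot f(h,h'))$ is central: the hypothesis only provides centrality of $h\blackdiamond f(h',h'')$ itself, not of its $\cdot$-product with a noncentral element, so one must first invoke the first equality of \eqref{equivalente a [3.6]} to split the inner $\cdot$ before the centrality assumption becomes applicable. This is the only subtlety; the rest is bookkeeping with the elementary identities listed in the first paragraph.
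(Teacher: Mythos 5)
Your proof is correct and takes essentially the same route as the paper's: specialize \eqref{condicion equivalente compatibilidad suma cdot con y} (at $h=0$, $y'=0$, and at $y=y'=0$) to extract the two identities, and conversely expand both sides, absorb the central summands, and combine the cycle-set axiom \eqref{compatibilidad suma cdot} in $I$ with the two identities of \eqref{equivalente a [3.6]}. The only remark worth making is that the ``subtlety'' you flag about $(h\cdot h')\blackdiamond\bigl((h\blackdiamond y)\cdot f(h,h')\bigr)$ is unnecessary: taking $h=0$ in the centrality hypothesis (since $0\blackdiamond y=y$) already gives $f(h,h')\in \Z(I)$, hence $(h\blackdiamond y)\cdot f(h,h')=f(h,h')$, which is how the paper handles that term without invoking the first identity of \eqref{equivalente a [3.6]}.
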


\begin{proof} A direct computation shows that, if we are under the assumptions at the beginning of this subsection, then condition~\eqref{condicion equivalente compatibilidad suma cdot con y} reduces to the fact that, for all $h,h'\in H$ and $y,y',y''\in I$,
$$
\bigl((h+h')\blackdiamond (y+y')\bigr) \cdot \bigl((h+h')\blackdiamond y''\bigr)=\bigl((h\cdot h')\blackdiamond ((h\blackdiamond y)\cdot (h\blackdiamond y'))\bigr)\cdot \bigl((h\cdot h')\blackdiamond ((h\blackdiamond y)\cdot (h\blackdiamond y''))\bigr).
$$ 	
%\begin{comment}
%Prueba de lo afirmado arriba
\noindent This follows since
\begin{align*}
((h+h') \blackdiamond & y^{\beta}_{y,y',h,h'})\cdot ((h+h')\blackdiamond y'') = \bigl((h+h')\blackdiamond (y+y')+(h+h')\blackdiamond \beta(h,h')\bigr) \cdot \bigl((h+h')\blackdiamond y''\bigr)\\
& = \bigl(((h+h')\blackdiamond \beta(h,h'))\cdot ((h+h')\blackdiamond (y+y'))\bigr) \cdot \bigl(((h+h')\blackdiamond \beta(h,h'))\cdot((h+h')\blackdiamond y'')\bigr)\\
& = ((h+h')\blackdiamond (y+y'))\cdot((h+h')\blackdiamond y'')
\end{align*}
and, similarly,
\begin{align*}
&((h\cdot h')\blackdiamond y^f_{y,y',h,h'})\cdot ((h\cdot h')\blackdiamond ((h\blackdiamond y)\cdot (h\blackdiamond y'')))\\
&((h\cdot h')\blackdiamond ((h\blackdiamond y)\cdot (h\blackdiamond y') + (h\blackdiamond y) \cdot f(h,h') + h\blackdiamond y \Yleft h\cdot h'
))\cdot ((h\cdot h')\blackdiamond ((h\blackdiamond y)\cdot (h\blackdiamond y'')))\\
&=\bigl((h\cdot h')\blackdiamond ((h\blackdiamond y)\cdot (h\blackdiamond y')) + (h\cdot h')\blackdiamond \cdot f(h,h') + (h\cdot h')\blackdiamond (h\blackdiamond y \Yleft h\cdot h')\bigr)\cdot ((h\cdot h')\blackdiamond ((h\blackdiamond y)\cdot (h\blackdiamond y'')))\\
& = \bigl((h\cdot h')\blackdiamond ((h\blackdiamond y)\cdot (h\blackdiamond y'))\bigr)\cdot \bigl((h\cdot h')\blackdiamond ((h\blackdiamond y)\cdot (h\blackdiamond y''))\bigr).
\end{align*}
%\end{comment}
Taking $y'=0$ and $h=0$ in this equality, we obtain the first equality in~\eqref{equivalente a [3.6]}; while taken $y=y'=0$, we obtain the second one. On the other hand, if the equalities in~\eqref{equivalente a [3.6]} are true, then
\begin{align*}
\bigl((h+h')\blackdiamond (y+y')\bigr) \cdot \bigl((h+h')\blackdiamond y''\bigr) &= \bigl(h+h'\bigr)\blackdiamond \bigl((y+y')\cdot y''\bigr)\\
&=\bigl(h\cdot h'\bigr)\blackdiamond \bigl(h\blackdiamond ((y+y')\cdot y'')\bigr)\\
&=\bigl(h\cdot h'\bigr)\blackdiamond \bigl(h\blackdiamond ((y\cdot y')\cdot (y\cdot y''))\bigr)\\
%
%&=\bigl(h\cdot h'\bigr)\blackdiamond \bigl((h\blackdiamond (y\cdot y'))\cdot (h\blackdiamond (y\cdot y''))\bigr)\\
%
%&= \bigl((h\cdot h')\blackdiamond ((h\blackdiamond (y\cdot y'))\bigr)\cdot \bigl((h\cdot h')\blackdiamond ((h\blackdiamond (y\cdot y''))\bigr)\\
%
&=\bigl((h\cdot h')\blackdiamond ((h\blackdiamond y)\cdot (h\blackdiamond y'))\bigr)\cdot \bigl((h\cdot h')\blackdiamond ((h\blackdiamond y)\cdot (h\blackdiamond y''))\bigr),
\end{align*}
as desired.
\end{proof}

\begin{remark}\label{condicion equivalente a la segunda igualdad en la proposicion anterior} Since the maps $h'\mapsto h\cdot h'$ are permutations of $H$, from the first equality in~\eqref{equivalencia linear cycle set braza} it follows that the second equality in~\eqref{equivalente a [3.6]} holds if and only if
\begin{equation}\label{asociatividad de diamante negro nueva}
hh'\blackdiamond y = h'\blackdiamond (h\blackdiamond y)\quad\text{for all $h,h'\in H$ and $y\in I$.}
\end{equation}
\end{remark}

\begin{remark}\label{equivalencia de [3.7]} Under the hypotheses assumed in this subsection, \eqref{condicion equivalente compatibilidad suma cdot con h} becomes
\begin{equation}
\begin{aligned}\label{condicion equivalente compatibilidad suma cdot con hsimp nueva}
f(h+h',h'') + (h+h')\blackdiamond y^{\beta}_{y,y',h,h'}\Yleft (h+h')\cdot h''&= (h\cdot h')\blackdiamond y^f_{y,0,h,h''}+ f(h\cdot h',h\cdot h'') \\
&  + (h\cdot h')\blackdiamond y^f_{y,y',h,h'}\Yleft (h\cdot h')\cdot(h\cdot h'').
\end{aligned}
\end{equation}
\end{remark}

\begin{lemma}\label{condicion [3.7]} Suppose that $I\times_{\beta} H$ satisfies condition~\eqref{compatibilidad cdot suma} and that identities~\eqref{equivalente a [3.6]} hold. Then formula~\eqref{condicion equivalente compatibilidad suma cdot con hsimp nueva} holds if and only if the following conditions are fulfilled:
\begin{align}
& (y+y')\Yleft h = y\Yleft h + y\cdot y' \Yleft h,\label{condicion equivalente compatibilidad suma cdot con hmassimp1 nuevo}\\
& h\blackdiamond y \Yleft h\cdot h' = h\blackdiamond (y\Yleft h') + h\blackdiamond (y\Yleft h) \Yleft h\cdot h',\label{condicion equivalente compatibilidad suma cdot con hmassimp2 nuevo}\\
\shortintertext{and}
&\begin{aligned}\label{condicion equivalente compatibilidad suma cdot con hmassimp3 nuevo}
f(h+h',h'') + (h+h')\blackdiamond \beta(h,h') \Yleft (h+h')\cdot h'' & = (h\cdot h')\blackdiamond f(h,h'') + f(h\cdot h',h\cdot h'')\\
&+ (h\cdot h')\blackdiamond f(h,h') \Yleft (h\cdot h')\cdot(h\cdot h'').
\end{aligned}
\end{align}
\end{lemma}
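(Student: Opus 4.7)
The plan is to establish both implications by a combination of specializations of the variables (for $\Rightarrow$) and explicit expansions controlled by the standing hypotheses (for $\Leftarrow$). The key preliminary observation is that setting $h=0$ in the standing assumption of this section yields $y\Yleft h',\ \beta(h',h''),\ f(h',h'')\in Z(I)$ for all arguments (since $0\blackdiamond u = u$). Consequently $a\cdot z = z$ for every $a\in I$ and every central $z$, and combining this with condition~\eqref{condicion equivalente compatibilidad suma cdot con hmassimp1 nuevo} gives $(u+z)\Yleft k = u\Yleft k + z\Yleft k$ whenever $z\in Z(I)$. These two facts are the workhorse simplifications throughout.

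For the forward direction, each of the three conditions is obtained by a targeted specialization of~\eqref{condicion equivalente compatibilidad suma cdot con hsimp nueva}. Setting $h=h'=0$ trivializes all instances of $\beta$ and $f$, reduces $0\blackdiamond u$ to $u$ and $0\cdot h''$ to $h''$; what remains collapses immediately to condition~\eqref{condicion equivalente compatibilidad suma cdot con hmassimp1 nuevo}. Setting $y=y'=0$ makes every $(h\blackdiamond y)\cdot(\ldots)$ factor collapse (since $h\blackdiamond 0 = 0$ is the identity for $\cdot$ on the left), so $y^f_{0,0,h,h'} = f(h,h')$, and the identity becomes exactly condition~\eqref{condicion equivalente compatibilidad suma cdot con hmassimp3 nuevo}. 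Finally, the specialization $h=0$, $y'=0$ turns $y^\beta_{y,0,0,h'}$ into $y$ and $y^f_{y,0,0,h'}$ into $y\Yleft h'$; after cancelling the common $f(h',h'')$-terms on both sides, the resulting equation is condition~\eqref{condicion equivalente compatibilidad suma cdot con hmassimp2 nuevo} after renaming $h'\mapsto h$, $h''\mapsto h'$.

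For the converse, the plan is to expand both sides of~\eqref{condicion equivalente compatibilidad suma cdot con hsimp nueva} fully. Inside the $y^f$-expressions I use the first identity of~\eqref{equivalente a [3.6]} to rewrite $(h\blackdiamond y)\cdot(h\blackdiamond y')$ as $h\blackdiamond(y\cdot y')$ and the centrality of $f(h,h')$ to replace $(h\blackdiamond y)\cdot f(h,h')$ by $f(h,h')$. I then use the second identity of~\eqref{equivalente a [3.6]} to rewrite $(h\cdot h')\blackdiamond(h\blackdiamond u)$ as $(h+h')\blackdiamond u$ and~\eqref{compatibilidad suma cdot} to identify $(h+h')\cdot h'' = (h\cdot h')\cdot(h\cdot h'')$. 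Using condition~\eqref{condicion equivalente compatibilidad suma cdot con hmassimp1 nuevo} together with the centrality remarks, I distribute $\Yleft$ over the three-term sum inside $(h+h')\blackdiamond y^\beta_{y,y',h,h'}$ on the left and the three-term sum inside $(h\cdot h')\blackdiamond y^f_{y,y',h,h'}$ on the right. Then I apply condition~\eqref{condicion equivalente compatibilidad suma cdot con hmassimp2 nuevo} to the triple $(h\cdot h',\,h\blackdiamond y,\,h\cdot h'')$ to rewrite one $\Yleft$-term on the left as a matching pair of terms on the right. After pairing off the $(h\blackdiamond y)\Yleft(h\cdot h'')$-contributions, the $(y\cdot y')$-contributions, and the $((h\blackdiamond y)\Yleft(h\cdot h'))$-contributions across the two sides, what remains reduces to condition~\eqref{condicion equivalente compatibilidad suma cdot con hmassimp3 nuevo}, which holds by hypothesis.

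The main obstacle is the bookkeeping in the converse: keeping track of the many summands produced by expanding $y^\beta_{y,y',h,h'}$ and $y^f_{y,y',h,h'}$, and correctly pairing them between the two sides. In particular, the $(y\cdot y')$-contribution appears on the left as $(h+h')\blackdiamond(y\cdot y')\Yleft(h+h')\cdot h''$ and on the right as $(h\cdot h')\blackdiamond(h\blackdiamond(y\cdot y'))\Yleft(h\cdot h')\cdot(h\cdot h'')$, and identifying these two expressions depends on the second identity of~\eqref{equivalente a [3.6]} together with~\eqref{compatibilidad suma cdot}. No idea beyond these three conditions and the central-cocycle simplifications is required.
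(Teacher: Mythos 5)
Your proposal is correct and follows essentially the same route as the paper: the forward direction by the specializations $h=h'=0$ and $h=0,\,y'=0$ (your extra specialization $y=y'=0$ for~\eqref{condicion equivalente compatibilidad suma cdot con hmassimp3 nuevo} is a harmless shortcut where the paper instead proves the equivalence of~\eqref{condicion equivalente compatibilidad suma cdot con hsimp nueva} and~\eqref{condicion equivalente compatibilidad suma cdot con hmassimp3 nuevo} under the first two conditions), and the converse by the same expansion of both sides using~\eqref{condicion equivalente compatibilidad suma cdot con hmassimp1 nuevo}, the centrality assumptions, \eqref{equivalente a [3.6]}, \eqref{compatibilidad suma cdot}, and finally~\eqref{condicion equivalente compatibilidad suma cdot con hmassimp2 nuevo} applied to $(h\cdot h',\,h\blackdiamond y,\,h\cdot h'')$. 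The term-pairing you describe (the $(y\cdot y')$-terms, the $(h\blackdiamond y)\Yleft(h\cdot h'')$-terms, and the $(h\blackdiamond y)\Yleft(h\cdot h')$-terms) matches the paper's computation exactly.
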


\begin{proof} When $h=h'=0$, formula~\eqref{condicion equivalente compatibilidad suma cdot con hsimp nueva} becomes identity~\eqref{condicion equivalente compatibilidad suma cdot con hmassimp1 nuevo}; while, when $h=0$ and $y'=0$, formula~\eqref{condicion equivalente compatibilidad suma cdot con hsimp nueva}~be\-comes identity~\eqref{condicion equivalente compatibilidad suma cdot con hmassimp2 nuevo}. Thus, in order to finish the proof will be sufficient to show that, if these identities are satisfied, then conditions~\eqref{condicion equivalente compatibilidad suma cdot con hsimp nueva} and~\eqref{condicion equivalente compatibilidad suma cdot con hmassimp3 nuevo} are equivalent. A direct computation using~\eqref{condicion equivalente compatibilidad suma cdot con hmassimp1 nuevo}, proves that
\begin{align*}
& (h+h')\blackdiamond y^{\beta}_{y,y',h,h'}\Yleft (h+h')\cdot h'' = (h+h')\blackdiamond \beta(h,h') \Yleft (h+h')\cdot h'' + (h+h')\blackdiamond (y+y') \Yleft (h+h')\cdot h'',\\
&(h\cdot h') \blackdiamond y^f_{y,0,h,h''} = (h\cdot h')\blackdiamond f(h,h'') + (h\cdot h')\blackdiamond (h\blackdiamond y \Yleft h\cdot h''),\\
& (h\cdot h') \blackdiamond y^f_{y,y',h,h'}\Yleft (h\cdot h')\cdot(h\cdot h'') = (h\cdot h')\blackdiamond f(h,h') \Yleft (h\cdot h')\cdot(h\cdot h'')\\
&\phantom{(h\cdot h') \blackdiamond y^f_{y,y',h,h'}\Yleft (h\cdot h')\cdot(h\cdot h'')} + (h\cdot h')\blackdiamond \bigl((h\blackdiamond y)\cdot (h\blackdiamond y')\bigr) \Yleft (h\cdot h') \cdot (h\cdot h'')\\
&\phantom{(h\cdot h') \blackdiamond y^f_{y,y',h,h'}\Yleft (h\cdot h')\cdot(h\cdot h'')} + (h\cdot h')\blackdiamond (h\blackdiamond y \Yleft h\cdot h') \Yleft (h\cdot h')\cdot(h\cdot h'').
\end{align*}
Thus, we are reduced to check that
\begin{align*}
(h+h')\blackdiamond (y+y') \Yleft (h+h')\cdot h'' &= (h\cdot h')\blackdiamond (h\blackdiamond y \Yleft h\cdot h'') + (h\cdot h')\blackdiamond \bigl((h\blackdiamond y)\cdot (h\blackdiamond y')\bigr) \Yleft (h\cdot h')\cdot(h\cdot h'')\\
& + (h\cdot h')\blackdiamond (h\blackdiamond y \Yleft h\cdot h') \Yleft (h\cdot h')\cdot(h\cdot h'').
\end{align*}
But, since by identities~\eqref{compatibilidad suma cdot}, \eqref{equivalente a [3.6]} and~\eqref{condicion equivalente compatibilidad suma cdot con hmassimp1 nuevo},
\begin{align*}
(h+h')& \blackdiamond (y+y') \Yleft (h+h')\cdot h'' = \bigl((h+h')\blackdiamond y+(h+h')\blackdiamond y'\bigr) \Yleft (h+h')\cdot h'' \\
& = (h+h')\blackdiamond y \Yleft (h+h')\cdot h'' + ((h+h')\blackdiamond y)\cdot ((h+h')\blackdiamond y')\Yleft (h+h')\cdot h''\\
& = (h\cdot h') \blackdiamond (h\blackdiamond y) \Yleft (h\cdot h')\cdot(h\cdot h'') + ((h\cdot h')\blackdiamond (h\blackdiamond y))\cdot ((h\cdot h') \blackdiamond (h\blackdiamond y'))\Yleft (h\cdot h')\cdot(h\cdot h'')\\
& = (h\cdot h') \blackdiamond (h\blackdiamond y) \Yleft (h\cdot h')\cdot(h\cdot h'') + (h\cdot h')\blackdiamond \bigl((h\blackdiamond y)\cdot (h\blackdiamond y')\bigr) \Yleft (h\cdot h')\cdot(h\cdot h''),
\end{align*}
for this we only must show that
$$
(h\cdot h') \blackdiamond (h\blackdiamond y) \Yleft (h\cdot h')\cdot(h\cdot h'') = (h\cdot h')\blackdiamond (h\blackdiamond y \Yleft h\cdot h'') + (h\cdot h')\blackdiamond (h\blackdiamond y \Yleft h\cdot h') \Yleft (h\cdot h')\cdot(h\cdot h''),
$$
which follows immediately from equality~\eqref{condicion equivalente compatibilidad suma cdot con hmassimp2 nuevo}.
\end{proof}

Recall again that we are assuming that $y\cdot w_h = y\Yleft h + w_h$ and  $w_h\cdot y = h\blackdiamond y$ for all $h\in H$ and $y\in I$.

\begin{theorem}\label{condiciones para ser extension, cociclos en el centro} Under the conditions at the beginning ot the section, the abelian group $I\times_{\beta} H$ is a linear cycle set via \eqref{formula para cdot en I times H} and the short exact sequence sequence~\eqref{extension} is an extension of linear cycle sets if and only if the maps $y\mapsto h\blackdiamond y$ are permutations of $I$ and con\-di\-tions~\eqref{segunda equivalencia de (2.1)1}, \eqref{segunda equivalencia de (2.1)2 cociclos en el centro}, \eqref{equivalente a [3.6]}, \eqref{condicion equivalente compatibilidad suma cdot con hmassimp1 nuevo}, \eqref{condicion equivalente compatibilidad suma cdot con hmassimp2 nuevo} and~\eqref{condicion equivalente compatibilidad suma cdot con hmassimp3 nuevo} are satisfied.
\end{theorem}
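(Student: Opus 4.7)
The plan is to assemble this theorem directly from the characterization of extensions given by Proposition~\ref{caracterizacion de extensiones} combined with the three simplification results from the present section. Recall that Proposition~\ref{caracterizacion de extensiones} asserts that, in the general setting, $I\times_{\beta}H$ is a linear cycle set with the sought extension structure if and only if the maps $y\mapsto h\blackdiamond y$ are permutations of $I$ and the four conditions \eqref{segunda equivalencia de (2.1)1}, \eqref{segunda equivalencia de (2.1)2}, \eqref{condicion equivalente compatibilidad suma cdot con y} and \eqref{condicion equivalente compatibilidad suma cdot con h} hold. So the task reduces to showing that, under the central-cocycle hypotheses of this section, this list of four conditions is equivalent to the list \eqref{segunda equivalencia de (2.1)1}, \eqref{segunda equivalencia de (2.1)2 cociclos en el centro}, \eqref{equivalente a [3.6]}, \eqref{condicion equivalente compatibilidad suma cdot con hmassimp1 nuevo}, \eqref{condicion equivalente compatibilidad suma cdot con hmassimp2 nuevo}, \eqref{condicion equivalente compatibilidad suma cdot con hmassimp3 nuevo} appearing in the statement.

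First I would leave \eqref{segunda equivalencia de (2.1)1} unchanged; it survives the centrality assumption verbatim. Next, Remark~\ref{equivalencia de [3.3]} reformulates \eqref{segunda equivalencia de (2.1)2} as \eqref{segunda equivalencia de (2.1)2 cociclos en el centro} once one uses that $h\blackdiamond \beta(h',h'')\in Z(I)$, and in particular $y\cdot \beta(h',h'')=\beta(h',h'')$, so this replacement is immediate. The replacement of \eqref{condicion equivalente compatibilidad suma cdot con y} by \eqref{equivalente a [3.6]} is furnished by Lemma~\ref{condicion [3.6]}; note however that this lemma presupposes that $I\times_{\beta}H$ already satisfies~\eqref{compatibilidad cdot suma}, which by Proposition~\ref{segunda equivalencia de (2.1)} is ensured precisely by \eqref{segunda equivalencia de (2.1)1} and \eqref{segunda equivalencia de (2.1)2 cociclos en el centro} already on the table. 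So we may invoke Lemma~\ref{condicion [3.6]} without circularity.

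For the remaining condition \eqref{condicion equivalente compatibilidad suma cdot con h}, Remark~\ref{equivalencia de [3.7]} first rewrites it in the form \eqref{condicion equivalente compatibilidad suma cdot con hsimp nueva}, again using that the relevant instances of $\blackdiamond$ land in $Z(I)$, so that the pairs of elements $(h\blackdiamond y)\cdot(h\blackdiamond y')$ and $h\blackdiamond y\Yleft h\cdot h'$ absorb into the simpler expressions displayed there. Then Lemma~\ref{condicion [3.7]}, whose hypotheses \eqref{compatibilidad cdot suma} and \eqref{equivalente a [3.6]} have just been secured, shows that \eqref{condicion equivalente compatibilidad suma cdot con hsimp nueva} is equivalent to the simultaneous fulfilment of \eqref{condicion equivalente compatibilidad suma cdot con hmassimp1 nuevo}, \eqref{condicion equivalente compatibilidad suma cdot con hmassimp2 nuevo} and \eqref{condicion equivalente compatibilidad suma cdot con hmassimp3 nuevo}.

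Chaining these four reductions gives the claimed equivalence. The only subtlety — what I would call the main obstacle to a careless write-up — is the ordering: Lemmas~\ref{condicion [3.6]} and~\ref{condicion [3.7]} are not unconditional equivalences but require that earlier axioms already hold. Consequently the rewriting must be presented sequentially, so that at each step the hypotheses of the next simplification lemma are visibly in force. Once this order is respected, the proof is nothing more than a citation of Remark~\ref{equivalencia de [3.3]}, Lemma~\ref{condicion [3.6]}, Remark~\ref{equivalencia de [3.7]} and Lemma~\ref{condicion [3.7]}, applied in turn to the output of Proposition~\ref{caracterizacion de extensiones}.
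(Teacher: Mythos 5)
Your proposal is correct and follows essentially the same route as the paper, which proves the theorem by citing Proposition~\ref{caracterizacion de extensiones}, Remarks~\ref{equivalencia de [3.3]} and~\ref{equivalencia de [3.7]}, and Lemmas~\ref{condicion [3.6]} and~\ref{condicion [3.7]}. Your explicit attention to the order of the reductions (so that condition~\eqref{compatibilidad cdot suma} is in force before invoking the two lemmas) is exactly the point the paper leaves implicit, and it is handled correctly.
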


\begin{proof} This is a consequence of Proposition~~\ref{caracterizacion de extensiones}, Remarks~\ref{equivalencia de [3.3]} and~\ref{equivalencia de [3.7]}, and Lemmas~\ref{condicion [3.6]} and~\ref{condicion [3.7]}. 	
\end{proof}

In the following result we do not assume a priori that the hypotheses at the beginning of this section are satisfied.

\begin{corollary}\label{ppal seccion 4}
Let $I$ and $H$ be linear cycle sets and let $\blackdiamond\colon H\times I\to I$, $\Yleft \colon I\times H\to I$ and $f\colon H\times H\to I$  be maps satisfying conditions~(1)--(4) of Proposition~\ref{primeras propiedades}. Assume that $h\blackdiamond (y\Yleft h')$, $h\blackdiamond \beta(h',h'')$ and $h\blackdiamond f(h',h'')$ belong to $Z(I)$ for all $h,h',h''\in H$ and $y\in I$, and let $I\times_{\beta,f}^{\blackdiamond,\Yleft} H$ be as in Notation~\ref{notacion para la estructura}. Then $I\times_{\beta,f}^{\blackdiamond,\Yleft} H$ is a linear cycle set and the short exact sequence sequence~\eqref{extension} is an extension of linear cycle sets if and only if the maps $y\mapsto h\blackdiamond y$ are permutations of $I$ and
\begin{align*}
& \beta(h,h')+\beta(h+h',h'') = \beta(h',h'')+\beta(h,h'+h''),\\
& \beta(h,0) = \beta(0,h) = 0\quad\text{and} \quad \beta(h,h')= \beta(h',h),\\
& h\blackdiamond  \beta(h',h'') + f(h,h'+h'') = f(h,h')+f(h,h'')+\beta(h\cdot h',h\cdot h''),\\
& y\Yleft (h'+h'') = y\Yleft h' + y\Yleft h'',\\
&(h\blackdiamond y)\cdot (h\blackdiamond y')= h\blackdiamond (y\cdot y')\quad\text{and}\quad (h+h')\blackdiamond y = (h\cdot h') \blackdiamond (h\blackdiamond y),\\
& (y+y')\Yleft h = y\Yleft h + y\cdot y' \Yleft h,\\
& h\blackdiamond y \Yleft h\cdot h' = h\blackdiamond (y\Yleft h') + h\blackdiamond (y\Yleft h) \Yleft h\cdot h'
\shortintertext{and}
&\begin{aligned}\label{condicion equivalente compatibilidad suma cdot con hmassimp3 nuevo}
f(h+h',h'') + (h+h')\blackdiamond \beta(h,h') \Yleft (h+h')\cdot h'' & = (h\cdot h')\blackdiamond f(h,h'') + f(h\cdot h',h\cdot h'')\\
&+ (h\cdot h')\blackdiamond f(h,h') \Yleft (h\cdot h')\cdot(h\cdot h''),
\end{aligned}
\end{align*}
for all $h,h',h''\in H$ and $y,y'\in I$.
\end{corollary}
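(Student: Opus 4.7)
The plan is to reduce this statement to Theorem~\ref{condiciones para ser extension, cociclos en el centro}. The only difference between the two is bookkeeping: that theorem assumes a priori that~\eqref{extension} is a short exact sequence of abelian groups, whereas the corollary does not assume this and instead folds the abelian-group cocycle conditions on $\beta$ into the list of required identities.

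The first step is to match the first three conditions in the corollary's list, namely
\begin{align*}
& \beta(h,h')+\beta(h+h',h'') = \beta(h',h'')+\beta(h,h'+h''),\\
& \beta(h,0) = \beta(0,h) = 0,\qquad \beta(h,h')= \beta(h',h),
\end{align*}
with identities~\eqref{condicion de cociclo} and~\eqref{normalidad y cociclo abeliano}. By the discussion preceding Definition~\ref{extensiones de linear cycle sets}, these are equivalent to the statement that $(I\times_\beta H,+)$ is an abelian group and~\eqref{extension} is a short exact sequence of abelian groups. The remaining six conditions in the corollary's list coincide, in order, with~\eqref{segunda equivalencia de (2.1)1}, \eqref{segunda equivalencia de (2.1)2 cociclos en el centro}, \eqref{equivalente a [3.6]}, \eqref{condicion equivalente compatibilidad suma cdot con hmassimp1 nuevo}, \eqref{condicion equivalente compatibilidad suma cdot con hmassimp2 nuevo} and~\eqref{condicion equivalente compatibilidad suma cdot con hmassimp3 nuevo}; this is the item-by-item verification I would carry out.

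With this identification in hand, both directions are immediate. For the forward implication, if $I\times_{\beta,f}^{\blackdiamond,\Yleft} H$ is a linear cycle set and~\eqref{extension} is an extension of linear cycle sets, then its additive structure is in particular an abelian group, so $\beta$ satisfies the first three conditions. We are then in the hypotheses at the start of Section~\ref{Building extensions of linear cycle sets} together with the centrality assumptions of Section~\ref{Extensions with central cocycles}, and Theorem~\ref{condiciones para ser extension, cociclos en el centro} supplies the remaining six conditions together with the permutation requirement on $y\mapsto h\blackdiamond y$. Conversely, if all the listed conditions hold, the first three promote $I\times_\beta H$ to an abelian group extension, after which the same theorem produces the linear cycle set structure and the fact that~\eqref{extension} is an extension of linear cycle sets.

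No serious obstacle is expected: the only care required is the bookkeeping step of certifying that every condition in Theorem~\ref{condiciones para ser extension, cociclos en el centro} corresponds exactly to one item in the corollary's list, and that the hypotheses of that theorem (in particular the centrality conditions on $h\blackdiamond(y\Yleft h')$, $h\blackdiamond\beta(h',h'')$ and $h\blackdiamond f(h',h'')$, together with conditions (1)--(4) of Proposition~\ref{primeras propiedades}) are all available from the assumptions of the corollary. Nothing computational remains.
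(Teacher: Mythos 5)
Your proposal is correct and follows essentially the same route as the paper: the paper's proof likewise observes that the displayed formulas are exactly conditions~\eqref{condicion de cociclo}, \eqref{normalidad y cociclo abeliano}, \eqref{segunda equivalencia de (2.1)1}, \eqref{segunda equivalencia de (2.1)2 cociclos en el centro}, \eqref{equivalente a [3.6]}, \eqref{condicion equivalente compatibilidad suma cdot con hmassimp1 nuevo}, \eqref{condicion equivalente compatibilidad suma cdot con hmassimp2 nuevo} and~\eqref{condicion equivalente compatibilidad suma cdot con hmassimp3 nuevo}, and then invokes the discussion at the start of Section~\ref{Extensions of linear cycle sets} together with Theorem~\ref{condiciones para ser extension, cociclos en el centro}. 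Your bookkeeping of the abelian-group conditions on $\beta$ and the two-directional application of that theorem is exactly the intended argument.
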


\begin{proof}
Since the displayed formulas are conditions~\eqref{condicion de cociclo}, \eqref{normalidad y cociclo abeliano}, \eqref{segunda equivalencia de (2.1)1}, \eqref{segunda equivalencia de (2.1)2 cociclos en el centro}, \eqref{equivalente a [3.6]}, \eqref{condicion equivalente compatibilidad suma cdot con hmassimp1 nuevo}, \eqref{condicion equivalente compatibilidad suma cdot con hmassimp2 nuevo} and~\eqref{condicion equivalente compatibilidad suma cdot con hmassimp3 nuevo}, the result follows immediately from the comments at the beginning of Section~\ref{Extensions of linear cycle sets} and Theorem~\ref{condiciones para ser extension, cociclos en el centro}.
\end{proof}

\begin{proposition}\label{en terminos de triangulo} Let $\triangleleft\colon I \times H\to I$ be the map defined by $y\triangleleft h \coloneqq  h\blackdiamond (y - y\Yleft h)$. If identities~\eqref{segunda equivalencia de (2.1)2 cociclos en el centro}, \eqref{asociatividad de diamante negro nueva} and~\eqref{condicion equivalente compatibilidad suma cdot con hmassimp1 nuevo} hold and the maps $y\mapsto h \blackdiamond y$ are permutations of $I$, then condition~\eqref{condicion equivalente compatibilidad suma cdot con hmassimp2 nuevo} is fulfilled if and only if
\begin{equation}\label{asosiatividad de triangulo a izquierda nuevo}
y\triangleleft hh'= (y\triangleleft h)\triangleleft h'\quad\text{for all $h,h'\in H$ and $y\in I$.}
\end{equation}
\end{proposition}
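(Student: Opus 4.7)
The plan is to reduce the identity $y\triangleleft hh'=(y\triangleleft h)\triangleleft h'$ to a concrete algebraic equation, call it $(\star)$, and to derive from~\eqref{condicion equivalente compatibilidad suma cdot con hmassimp2 nuevo} a companion equation $(\star\star)$ via the change of variables afforded by the brace--linear cycle set dictionary~\eqref{equivalencia linear cycle set braza}. Applying the bijection $h'\blackdiamond-$ to $(\star\star)$ will yield $(\star)$, making the equivalence reversible.

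First, using the additivity of $h\blackdiamond-$ (Proposition~\ref{primeras propiedades}(1)) together with~\eqref{asociatividad de diamante negro nueva}, I would rewrite
\[
y\triangleleft hh' = hh'\blackdiamond y - hh'\blackdiamond(y\Yleft hh'),\qquad h'\blackdiamond(y\triangleleft h) = hh'\blackdiamond y - hh'\blackdiamond(y\Yleft h).
\]
For the last piece $h'\blackdiamond\bigl((y\triangleleft h)\Yleft h'\bigr)$ of $(y\triangleleft h)\triangleleft h'$, the standing hypothesis of Section~\ref{Extensions with central cocycles} gives $h\blackdiamond(y\Yleft h)\in Z(I)$. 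Specialising~\eqref{condicion equivalente compatibilidad suma cdot con hmassimp1 nuevo} to a central summand $y'\in Z(I)$ (where $y\cdot y'=y'$) shows that $\Yleft h'$ distributes over sums whose second summand is central; combined with $0\Yleft h'=0$, which forces $(-z)\Yleft h'=-z\Yleft h'$ for $z\in Z(I)$, this yields
\[
\bigl(h\blackdiamond y - h\blackdiamond(y\Yleft h)\bigr)\Yleft h' = h\blackdiamond y\Yleft h' - h\blackdiamond(y\Yleft h)\Yleft h'.
\]
Substituting back and cancelling $hh'\blackdiamond y$, the identity~\eqref{asosiatividad de triangulo a izquierda nuevo} becomes equivalent to
\[
(\star)\qquad hh'\blackdiamond(y\Yleft hh') = hh'\blackdiamond(y\Yleft h) + h'\blackdiamond(h\blackdiamond y\Yleft h') - h'\blackdiamond\bigl(h\blackdiamond(y\Yleft h)\Yleft h'\bigr).
\]

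Second, I would transform~\eqref{condicion equivalente compatibilidad suma cdot con hmassimp2 nuevo}. Since the left translations $\tilde h'\mapsto h\cdot\tilde h'$ are bijections of $H$,~\eqref{condicion equivalente compatibilidad suma cdot con hmassimp2 nuevo} holds for all $(h,\tilde h',y)$ iff, after the substitution $\tilde h'={}^h h'$ (so that $h\cdot\tilde h'=h'$), one has
\[
h\blackdiamond y\Yleft h' = h\blackdiamond\bigl(y\Yleft{}^h h'\bigr) + h\blackdiamond(y\Yleft h)\Yleft h'\qquad\text{for all $h,h'\in H$ and $y\in I$.}
\]
From~\eqref{equivalencia linear cycle set braza} one deduces ${}^h h'=hh'-h$, and then~\eqref{segunda equivalencia de (2.1)2 cociclos en el centro} (which, via $0\Yleft h=0$, also yields $y\Yleft(-h)=-y\Yleft h$) together with the additivity of $h\blackdiamond-$ gives $h\blackdiamond(y\Yleft{}^h h')=h\blackdiamond(y\Yleft hh')-h\blackdiamond(y\Yleft h)$. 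Substituting and rearranging, this becomes
\[
(\star\star)\qquad h\blackdiamond(y\Yleft hh') = h\blackdiamond y\Yleft h' + h\blackdiamond(y\Yleft h) - h\blackdiamond(y\Yleft h)\Yleft h'.
\]

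Finally, applying $h'\blackdiamond-$ to both sides of $(\star\star)$ and using~\eqref{asociatividad de diamante negro nueva} to absorb $h'\blackdiamond(h\blackdiamond-)$ into $hh'\blackdiamond-$ produces exactly $(\star)$. Since $h'\blackdiamond-$ is a permutation of $I$ by hypothesis, this step is reversible, so $(\star)\iff(\star\star)$; chaining the equivalences yields \eqref{asosiatividad de triangulo a izquierda nuevo}$\iff(\star)\iff(\star\star)\iff$\eqref{condicion equivalente compatibilidad suma cdot con hmassimp2 nuevo}. The main obstacle is the bookkeeping in the first step---legitimising the distribution of $\Yleft h'$ across the subtraction $h\blackdiamond y-h\blackdiamond(y\Yleft h)$, which is exactly what the centrality of $h\blackdiamond(y\Yleft h)$ buys---and recognising that the bridge between the cycle-set operation $h\cdot h'$ appearing in~\eqref{condicion equivalente compatibilidad suma cdot con hmassimp2 nuevo} and the group product $hh'$ appearing in~\eqref{asosiatividad de triangulo a izquierda nuevo} is furnished by the brace identity ${}^h h'=hh'-h$.
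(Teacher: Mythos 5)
Your proof is correct and takes essentially the same route as the paper's: both expand $y\triangleleft hh'$ and $(y\triangleleft h)\triangleleft h'$ using \eqref{asociatividad de diamante negro nueva}, the additivity of $h\blackdiamond-$ and the distribution of $\Yleft h'$ over the difference (justified, as you do, by centrality together with \eqref{condicion equivalente compatibilidad suma cdot con hmassimp1 nuevo}), then invoke bijectivity of $h'\blackdiamond-$ and pass between $h\cdot h'$ and $hh'$ via ${}^h h'=hh'-h$ from \eqref{equivalencia linear cycle set braza}. The only differences are cosmetic (applying $h'\blackdiamond-$ at the end rather than cancelling it at the outset, and a harmless mislabel where you cite $0\Yleft h=0$ instead of $y\Yleft 0=0$ for $y\Yleft(-h)=-(y\Yleft h)$).
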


\begin{proof} On one hand, by \eqref{asociatividad de diamante negro nueva}, we have
$$
y\triangleleft hh'= hh'\blackdiamond (y-y\Yleft hh')= h'\blackdiamond \bigl(h\blackdiamond y - h\blackdiamond (y\Yleft hh')\bigr)\quad \text{for all $y\in I$ and $h,h'\in H$}.
$$
On the other hand, a direct computation using~\eqref{condicion equivalente compatibilidad suma cdot con hmassimp1 nuevo}, shows that, also for all $y\in I$ and $h,h'\in H$,
\begin{align*}
(y\triangleleft h)\triangleleft h' &= h'\blackdiamond \bigl(h\blackdiamond (y-y\Yleft h) - h\blackdiamond (y-y\Yleft h)\Yleft h'\bigr)\\
& = h'\blackdiamond \bigl(h\blackdiamond y- h\blackdiamond (y\Yleft h) - h\blackdiamond y\Yleft h' + h \blackdiamond (y\Yleft h)\Yleft h'\bigr).
\end{align*}
Thus, since the maps $y\mapsto h \blackdiamond y$ are permutations of $I$, using~\eqref{segunda equivalencia de (2.1)2 cociclos en el centro} we get that equality~\eqref{asosiatividad de triangulo a izquierda nuevo} holds if and only if
$$
h\blackdiamond y\Yleft h' = h\blackdiamond (y\Yleft (hh'-h)) + h \blackdiamond (y\Yleft h)\Yleft h' \quad\text{for all  $h,h'\in H$ and $y\in I$.}
$$
But it is clear that this equality is true if and only if~\eqref{condicion equivalente compatibilidad suma cdot con hmassimp2 nuevo} is (take into account the first equality in~\eqref{equivalencia linear cycle set braza}).
\end{proof}

\begin{remark}\label{caso I trivial} When $I$ is trivial, the first condition in~\eqref{equivalente a [3.6]} is automatically fulfilled and identity~\eqref{condicion equivalente compatibilidad suma cdot con hmassimp1 nuevo}~be\-comes $(y+y')\Yleft h = y\Yleft h +  y' \Yleft h$.
\end{remark}

\begin{remark}\label{socalo y centro son cerrados por diamante} Let $(\iota,I\times_{\beta,f}^{\blackdiamond, \Yleft} H, \pi)$ be an extension of linear cycle sets and let $y\in I$ and $h\in H$. Since the~maps $y'\mapsto h'\blackdiamond y'$ are permutations of $I$,  from the first iden\-tity in~\eqref{equivalente a [3.6]}, it follows that if $y\in \Soc(I)$, then $h\blackdiamond y\in \Soc(I)$, and if $y\in \Z(I)$, then $h\blackdiamond y\in \Z(I)$.
\end{remark}

%\begin{remark}\label{socalo y centro son cerrados por diamante} Let $(\iota,I\times_{\beta,f}^{\blackdiamond, \Yleft} H, \pi)$ be an extension of linear cycle sets and let $y\in I$ and $h\in H$. By the first~iden\-tity in~\eqref{equivalente a [3.6]}, if $y\in \Soc(I)$, then $h\blackdiamond y\in \Soc(I)$; while if $y\in \Z(I)$, then $h\blackdiamond y\in \Z(I)$.
%\end{remark}

\begin{proposition}\label{vrphi es central} Let $I$ and $H$ be linear cycle sets and let $(\iota,I\times_{\beta,f}^{\blackdiamond,\Yleft} H,\pi)$ and $(\iota,I\times_{\beta',f'}^{\blackdiamond,\Yleft} H,\pi)$ be equivalent extensions of $H$ by $I$. The map $\varphi\colon H\to I$ of Proposition~\ref{equivalecia entre extensiones producto} takes its values in $\Z(I)$.
\end{proposition}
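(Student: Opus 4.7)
The plan is to split membership in $Z(I)$ into its two defining conditions and verify each one separately for $\varphi(h)$. Recall that $y\in Z(I)$ means both $y\in\Soc(I)$ (that is, $y\cdot a=a$ for all $a\in I$) and $a\cdot y=y$ for all $a\in I$.

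The second condition is immediate from condition~\eqref{segunda condicion} of Proposition~\ref{equivalecia entre extensiones producto}, which asserts precisely that $y\cdot\varphi(h)=\varphi(h)$ for all $y\in I$ and $h\in H$. So the remaining task is to show $\varphi(h)\in\Soc(I)$, i.e.\ $\varphi(h)\cdot a=a$ for every $a\in I$.

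For this, the key tool is the first identity in~\eqref{equivalente a [3.6]}, namely $(h\blackdiamond y)\cdot(h\blackdiamond y')=h\blackdiamond(y\cdot y')$; this is available because we are working in Section~\ref{Extensions with central cocycles} and both extensions involved are linear cycle sets, so Theorem~\ref{condiciones para ser extension, cociclos en el centro} gives~\eqref{equivalente a [3.6]}. From condition~\eqref{primera condicion} we know $h\blackdiamond\varphi(h)\in\Soc(I)$. Given any $a\in I$, use the fact that $y\mapsto h\blackdiamond y$ is a permutation of $I$ to write $a=h\blackdiamond a'$ for some $a'\in I$. Then on the one hand $(h\blackdiamond\varphi(h))\cdot(h\blackdiamond a')=h\blackdiamond a'$ by the socle property, and on the other hand $(h\blackdiamond\varphi(h))\cdot(h\blackdiamond a')=h\blackdiamond(\varphi(h)\cdot a')$ by the cited identity. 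Injectivity of $h\blackdiamond$ forces $\varphi(h)\cdot a'=a'$, and since $a'$ (hence $a$) was arbitrary, $\varphi(h)\in\Soc(I)$.

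Together with the second condition already verified, this gives $\varphi(h)\in Z(I)$ for every $h\in H$, as required. No serious obstacle is expected: the whole argument is essentially the trick of pulling the socle condition back through the permutation $h\blackdiamond$, made possible by the multiplicativity identity in~\eqref{equivalente a [3.6]} that the central-cocycle hypotheses guarantee.
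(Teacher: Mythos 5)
Your proof is correct. It differs mildly from the paper's: the paper obtains $\varphi(h)\in\Soc(I)$ by a fresh computation with the equivalence $\phi$ itself, writing $h\blackdiamond y=\phi(w_h\cdot y)=\phi(w_h)\cdot\phi(y)=(\varphi(h)+w_h)\cdot y$ and expanding via~\eqref{compatibilidad suma cdot} and the standing hypothesis that $h\blackdiamond(\varphi(h)\Yleft h)\in Z(I)\subseteq\Soc(I)$, which yields $h\blackdiamond y=h\blackdiamond(\varphi(h)\cdot y)$ and then concludes by injectivity of $y\mapsto h\blackdiamond y$ and condition~\eqref{segunda condicion}. You instead reuse the already-established necessary condition~\eqref{primera condicion}, $h\blackdiamond\varphi(h)\in\Soc(I)$, and push it back through the bijection $y\mapsto h\blackdiamond y$ via the multiplicativity $(h\blackdiamond y)\cdot(h\blackdiamond y')=h\blackdiamond(y\cdot y')$ of~\eqref{equivalente a [3.6]}, which is legitimately available from Theorem~\ref{condiciones para ser extension, cociclos en el centro} since the given extensions are linear cycle set extensions under the central-cocycle hypotheses of Section~\ref{Extensions with central cocycles}. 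The two arguments share the final step (injectivity of $h\blackdiamond$ plus~\eqref{segunda condicion}), but your route is more modular: it performs no new computation inside $I\times_{\beta,f}^{\blackdiamond,\Yleft}H$ and in effect proves the converse of Remark~\ref{socalo y centro son cerrados por diamante}, namely that $\Soc(I)$ is not only preserved but also reflected by the maps $y\mapsto h\blackdiamond y$; the paper's route is more self-contained, needing only the morphism property of $\phi$ and the hypothesis on $h\blackdiamond(y\Yleft h')$ rather than the full strength of Theorem~\ref{condiciones para ser extension, cociclos en el centro}.
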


\begin{proof} By~\eqref{compatibilidad suma cdot} and the fact that $h\blackdiamond(\varphi(h)\Yleft h)\in\Soc(I)$, we have
$$
h\blackdiamond y  = \phi(w_h\cdot y) = \phi(w_h)\cdot\phi(y)=(\varphi(h)+w_h)\cdot y =(\varphi(h)\cdot w_h)\cdot (\varphi(h)\cdot y)=(\varphi(h)\Yleft h + w_h)\cdot (\varphi(h)\cdot y)= h\blackdiamond (\varphi(h)\cdot y).
$$
Since the map $y\mapsto h\blackdiamond y$ is bijective, $\varphi(h)\in\Soc(I)$ for all $h\in H$. By condition~\eqref{segunda condicion} this finishes the proof.
\end{proof}

\subsection[Extensions of linear cycle sets by trivial ideals]{Extensions of linear cycle sets by trivial ideals}
In this subsection we assume that $I$ is trivial and set $y^h\coloneqq h^{-1}\blackdiamond (y \triangleleft h)$ for each $y\in I$ and $h\in H$.

\begin{remark}\label{otra forma} Items (1)--(4) of Proposition~\ref{primeras propiedades} and conditions~\eqref{segunda equivalencia de (2.1)2 cociclos en el centro}, \eqref{equivalente a [3.6]}, \eqref{condicion equivalente compatibilidad suma cdot con hmassimp1 nuevo} and \eqref{condicion equivalente compatibilidad suma cdot con hmassimp2 nuevo} say that
\begin{align}
& y \triangleleft hh' = (y \triangleleft h) \triangleleft h',\quad (y + y')\triangleleft h  = y\triangleleft h + y'\triangleleft h,\quad y \triangleleft 0 = y,\label{eqq 1}\\
& h'h\blackdiamond y  = h\blackdiamond (h' \blackdiamond y),\quad h\blackdiamond (y + y') = h\blackdiamond y + h\blackdiamond y',\quad 0\blackdiamond y = y \label{eqq 2}\\
& y^{h+h'} + y = y^h + y^{h'}\quad \text{and}\quad 0^h = 0, \label{eqq 3}
\end{align}
for all $h,h'\in H$ and $y,y'\in I$. More precisely,
	
\begin{itemize}[itemsep=0.0ex, topsep=1.0ex, label=-]
		
\item The first condition in \eqref{equivalente a [3.6]} is automatically fulfilled, since $I$ is trivial; while the second one is equivalent to~\eqref{asociatividad de diamante negro nueva}, which is the first condition in~\eqref{eqq 2}.
		
\item Items~(1) and~(2) of Proposition~\ref{primeras propiedades} are the second and third conditions in~\eqref{eqq 2}; and item~(4) of~Propo\-sition~\ref{primeras propiedades} is equivalent to the last condition in~\eqref{eqq 1}. Since the first and third conditions in~\eqref{eqq 2} imply that $y^h=y-y\Yleft h$, item~(3) of Proposition~\ref{primeras propiedades} is equivalent to the last condition in~\eqref{eqq 3}.
		
\item Condition~\eqref{segunda equivalencia de (2.1)2 cociclos en el centro} is satisfied if an only if $y^{h+h'} + y = y^h + y^{h'}$.
		
\item Condition~\eqref{condicion equivalente compatibilidad suma cdot con hmassimp1 nuevo} (combined with Proposition~\ref{primeras propiedades}(1)) says that $(y + y')\triangleleft h  = y\triangleleft h + y'\triangleleft h$.
		
\item Finally, by Proposition~\ref{en terminos de triangulo} we know that condition~\eqref{condicion equivalente compatibilidad suma cdot con hmassimp2 nuevo} says that $y \triangleleft hh' = (y \triangleleft h) \triangleleft h'$.
	
\end{itemize}
Note that, by~\eqref{eqq 1} and~\eqref{eqq 2},
\begin{equation}
y^0 = y\qquad\text{and}\qquad (y_1+y_2)^h = y_1^h + y_2^h.\label{eq 3'}
\end{equation}
\end{remark}

Assume that~\eqref{extension} is a short exact sequences of linear cycle sets. Let $h\in H$ and $y\in I$.  A direct computation using  $y^{-1}=-y$, $y\cdot w_h=y\Yleft h + w_h$, $w_h\cdot w_h =-w_h^{-1}$ and Remark~\ref{caso I trivial}, proves that
$$
y\Yleft h + y^{-1} \Yleft h=0\Yleft h=0\quad\text{and}\quad w_h\cdot (y^{-1}\cdot w_h) = h\blackdiamond (y^{-1}\Yleft h) - w_h^{-1}.
$$
Using that $y\triangleleft h =  h\blackdiamond (y - y\Yleft h)$, these facts and the second condition in~\eqref{equivalencia linear cycle set braza}, we obtain that
$$
y\triangleleft h =h\blackdiamond(y^{-1} \Yleft h+y) = w_h\cdot (y^{-1}\cdot w_h)+ w_h^{-1}+h\blackdiamond y= w_h\cdot (y^{-1}\cdot w_h)+ w_h^{-1} + w_h\cdot y= w_h^{-1}yw_h.
$$
So, the map $\triangleleft$ is the right action $\sigma$ introduced by Bachiller above~\cite{B}*{Definition 3.1}.
\begin{comment}
%Lo anterior suponiendo solo que los cociclos son centrales
A direct computation using that $y\cdot w_h=y\Yleft h + w_h$, $y\Yleft h$ belongs to the center of $I$,  $y^{-1}\cdot w_h=y^{-1}\Yleft h + w_h$ and $w_h\cdot w_h =-w_h^{-1}$, proves that
$$
y\Yleft h + y^{-1} \Yleft h=0\Yleft h=0
w_h=y^{-1}\cdot (y\cdot w_h)= y\Yleft h + y^{-1} \Yleft h + w_h \quad\text{and}\quad w_h\cdot (y^{-1}\cdot w_h) = h\blackdiamond (y^{-1}\Yleft h) - w_h^{-1}.
$$
Using these facts we obtain that
$$
y\triangleleft h =  h\blackdiamond (y - y\Yleft h)=h\blackdiamond(y^{-1} \Yleft h+ y) = w_h\cdot (y^{-1}\cdot w_h)+ w_h^{-1}+h\blackdiamond y= w_h\cdot (y^{-1}\cdot w_h)+ w_h^{-1}+w_h\cdot y= w_h^{-1}yw_h.
$$
So, the map $\triangleleft$ defined in Proposition~\ref{en terminos de triangulo} is the right action $\sigma$ introduced in~\cite{Ba}. On the other hand
\end{comment}
On the other hand, the map $\nu$, also introduced above~\cite{B}*{Definition 3.1}, is given by $\nu_h(y)=(w_h)^{-1}\cdot y=w_{h^{-1}}\cdot y=h^{-1} \blackdiamond y$.

\begin{remark}\label{condicion para I incluido en Soc} Since $I$ is trivial, $I\subseteq \Soc\bigl(I\times_{\beta,f}^{\blackdiamond,\Yleft} H\bigr)$ if and only if $y\Yleft h= y\cdot w_h-w_h = 0$ for all $y\in I$ and $h\in H$. Since $y^h=y-y\Yleft h$, this happens if and only if $y^h=y$ for all $y\in I$ and $h\in H$. Moreover, in this case condition~\eqref{condicion equivalente compatibilidad suma cdot con hmassimp3 nuevo} becomes
\begin{equation}\label{condicion equivalente compatibilidad suma cdot con hmassimp3 caso simple}
f(h+h',h'')  = (h\cdot h')\blackdiamond f(h,h'') + f(h\cdot h',h\cdot h'').
\end{equation}
\end{remark}

\begin{remark}\label{equivalecia entre extensiones producto, caso I trivial} Let $I$ and $H$ be linear cycle sets, with $I$ trivial. By Proposition~\ref{equivalecia entre extensiones producto}, two extensions $(\iota,I\times_{\beta,f}^{\blackdiamond,\Yleft} H,\pi)$ and $(\iota,I\times_{\beta',f'}^{\blackdiamond,\Yleft} H,\pi)$, of $H$ by $I$, are equivalent if and only if there exists a map $\varphi\colon H\to I$ satisfying~\eqref{equivalencia a nivel de qrupos} and
\begin{equation}\label{tercera condicion caso I trivial}
\varphi(h\cdot h')= h\blackdiamond \varphi(h') + h\blackdiamond \varphi(h)\Yleft h\cdot h'\quad\text{for all $h,h'\in H$.}
\end{equation}
Note that, when $\Yleft=0$, identity \eqref{tercera condicion caso I trivial} becomes $\varphi(h\cdot h')= h\blackdiamond \varphi(h')$.
\end{remark}

\section[Cohomology of linear cycle sets (case \texorpdfstring{$\Yleft=0$}{<=0})]{Cohomology of linear cycle sets (case \texorpdfstring{$\pmb{\Yleft=0}$}{<=0})}\label{Cohomology of linear cycle sets1}
Let $H$ be a linear cycle set. For each $s\ge 1$, let $\mathds{Z}[\ov{H}^s]$ be the free abelian group with basis $\ov{H}^s$, where $\ov{H}\coloneqq H\setminus \{0\}$. We let $\sh(\mathds{Z}[\ov{H}^s])$ denote the subgroup of $\mathds{Z}[\ov{H}^s]$ generated by the shuffles
$$
\sum_{\sigma\in \sh_{l,s-l}} \sg(\sigma)(h_{\sigma^{-1}(1)},\dots, h_{\sigma^{-1}(s)}),
$$
taken for all $1\le l < s$ and $h_k\in\ov{H}$. Here $\sh_{l,s-l}$ is the subset of all the permutations $\sigma$ of $s$ elements satisfying $\sigma(1)<\cdots<\sigma(l)$ and $\sigma(l+1)<\cdots<\sigma(s)$. For each $r\ge 0$ and $s\ge 1$, let $\wh{C}^N_{rs}(H)\coloneqq \mathds{Z}[\ov{H}^{r}]\ot \ov{M}(s)$, where $\ov{M}(s)\coloneqq \frac{\mathds{Z}[\ov{H}^{s}]}{\sh(\mathds{Z}[\ov{H}^{s})]}$. Given $h_1,\dots,h_s\in \ov{H}$, we let $[h_1,\dots,h_s]$ denote the class of $(h_1,\dots,h_s)$ in $\ov{M}(s)$. Let $I$ be an abelian group endowed with a map $\blackdiamond \colon H\times I \to I$ satisfying conditions~\eqref{eqq 2}. Let $\wh{C}_N^{rs}(H,I)\coloneqq \Hom(\wh{C}^N_{rs}(H),I)$. Notice that $\wh{C}_N^{rs}(H,I)$ is isomorphic to the subgroup of $\Hom(\mathds{Z}[\ov{H}^{r+s}], I)$ consisting of all the maps $f$ such that
$$
\sum_{\sigma\in \sh_{l,s-l}} \sg(\sigma) f(h_1,\dots,h_r, h_{r+\sigma^{-1}(1)},\dots, h_{r+\sigma^{-1}(s)})=0,
$$
for all $h_1,\dots,h_{r+s}\in \ov{H}$ and $1\le l<s$. For all $r\in \mathds{N}$, let $R_{r-1,1}\colon \mathds{Z}[\ov{H}^r]\to I$ be the map  defined by
$$
R_{r-1,1}(h_1,\dots,h_r)\coloneqq (h_1+\cdots + h_{r-1})\cdot h_r.
$$
Consider the diagram $\bigl(\wh{C}_N^{**}(H,I),\partial_{\mathrm{h}}^{**},\partial_{\mathrm{v}}^{**}\bigr)$, where
$$
\partial_{\mathrm{h}}^{r+1,s}\colon \wh{C}_N^{rs}(H,I)\to \wh{C}_N^{r+1,s}(H,I)\quad\text{and}\quad \partial_{\mathrm{v}}^{r,s+1}\colon \wh{C}_N^{rs}(H,I)\to \wh{C}_N^{r,s+1}(H,I)
$$
are the maps defined by
\begin{align*}
& \partial_{\mathrm{h}}^{r+1,s}(f)(h_1,\dots, h_{r+1},[h_{r+2},\dots,h_{r+s+1}]) \coloneqq f(h_1\cdot h_2,\dots,h_1\cdot h_{r+1},[h_1\cdot h_{r+2},\dots,h_1\cdot h_{r+s+1}])\\
&\phantom{\partial_{\mathrm{h}}^{r+1,s}(f)(h_1,\dots,h_{r+1}} + \sum_{\jmath=1}^{r} (-1)^{\jmath} f(h_1,\dots, h_{\jmath-1},h_{\jmath}+h_{\jmath+1}, h_{\jmath+2},\dots,h_{r+1},[h_{r+2},\dots, h_{r+s+1}])\\
&\phantom{\partial_{\mathrm{h}}^{r+1,s}(f)(h_1,\dots,h_{r+1}} + (-1)^{r+1} R_{r1}(h_1,\dots,h_{r+1})\blackdiamond f(h_1,\dots,h_r,[h_{r+2},\dots, h_{r+s+1}])
\shortintertext{and}
& \partial_{\mathrm{v}}^{r,s+1}(f)(h_1,\dots,h_r,[h_{r+1},\dots,h_{r+s+1}]) \coloneqq (-1)^r f(h_1,\dots,h_r,[h_{r+2},\dots,h_{r+s+1}])\\
&\phantom{\partial_{\mathrm{v}}^{r,s+1}(f)(h_1,\dots,h_r} + \sum_{\jmath=r+1}^{r+s} (-1)^{\jmath} f(h_1,\dots,h_r,[h_{r+1},\dots,h_{\jmath-1}, h_{\jmath}+h_{\jmath+1},h_{\jmath+2},\dots, h_{r+s+1}])\\
&\phantom{\partial_{\mathrm{v}}^{r,s+1}(f)(h_1,\dots,h_r} + (-1)^{r+s+1} f(h_1,\dots,h_r,[h_{r+1},\dots,h_{r+s}]).
\end{align*}

\begin{theorem}\label{principal} The diagram $\bigl(\wh{C}_N^{**}(H,I),\partial_{\mathrm{h}}^{**},\partial_{\mathrm{v}}^{**}\bigr)$ is a double cochain complex.
\end{theorem}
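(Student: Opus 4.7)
The plan is to verify the three identities that characterize a double cochain complex, namely $\partial_{\mathrm{v}}\circ\partial_{\mathrm{v}}=0$, $\partial_{\mathrm{h}}\circ\partial_{\mathrm{h}}=0$ and $\partial_{\mathrm{h}}\partial_{\mathrm{v}}+\partial_{\mathrm{v}}\partial_{\mathrm{h}}=0$, together with the statement that both operators descend to the quotients by the shuffle subgroups $\sh(\mathds{Z}[\ov{H}^{s}])$. The vertical identity is the easy one: up to the Koszul prefactor $(-1)^r$, the formula defining $\partial_{\mathrm{v}}$ is the classical bar-type differential on the bracket entries $h_{r+1},\dots,h_{r+s+1}$, and so $\partial_{\mathrm{v}}^{2}=0$ follows from the standard telescoping cancellation for the normalized bar complex of $(H,+)$. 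Well-definedness on $\ov{M}(s)$ is the well-known fact that the shuffle subgroup is stable under the bar differential; neither the action $\blackdiamond$ nor the operation $\cdot$ of $H$ enter here.

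The verification $\partial_{\mathrm{h}}\circ\partial_{\mathrm{h}}=0$ is the technical heart of the proof. Expanding the composition $\partial_{\mathrm{h}}^{r+2,s}\circ\partial_{\mathrm{h}}^{r+1,s}$, the summands split into three kinds: pairs of inner substitutions $h_\jmath+h_{\jmath+1}$; pairs in which an inner substitution is composed with one of the outer operations (either the twist $h_\jmath\mapsto h_1\cdot h_\jmath$, or the action by $R_{r,1}(\dots)\blackdiamond$); and pairs of outer operations. The pure-inner cancellations are purely combinatorial, just as for the bar complex. The twist-twist pair uses the identity $R_{r,1}(h_1\cdot h_2,\dots,h_1\cdot h_{r+2})=R_{r+1,1}(h_1,\dots,h_{r+2})$, a direct consequence of \eqref{compatibilidad cdot suma} and \eqref{compatibilidad suma cdot}, to match the inner-then-outer twist against the outer-then-inner twist with opposite signs. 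For the outer-outer pair, I would rewrite the nested action $R_{r+1,1}\blackdiamond(R_{r,1}\blackdiamond(-))$ as a single action using the rule $(h+h')\blackdiamond y=(h\cdot h')\blackdiamond(h\blackdiamond y)$ from \eqref{equivalente a [3.6]} combined with the additivity $h\blackdiamond(y+y')=h\blackdiamond y+h\blackdiamond y'$; the resulting expression then coincides with the term produced by the $\jmath=r+1$ inner substitution of the outer $\partial_{\mathrm{h}}$ followed by the outer action of the inner $\partial_{\mathrm{h}}$. This cross-cancellation, which pairs a formally "outer-outer" term with a formally "inner-outer" term, is the main obstacle: the two linear cycle set axioms and the two $\blackdiamond$-compatibilities must line up exactly to force it.

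The mixed identity $\partial_{\mathrm{h}}\partial_{\mathrm{v}}+\partial_{\mathrm{v}}\partial_{\mathrm{h}}=0$ is essentially a commutativity statement converted into anticommutativity by the signs $(-1)^r$ and $(-1)^{r+1}$ built into the two differentials. Indeed, the inner substitutions of $\partial_{\mathrm{h}}$ only modify the first $r+1$ entries while $\partial_{\mathrm{v}}$ only modifies the bracket, so these pieces commute termwise and the sign shift produces cancellation. The remaining interactions come from the outer operations of $\partial_{\mathrm{h}}$ acting on the bracket: the twist $h_1\cdot(-)$ commutes with the sums inside the bracket by \eqref{compatibilidad cdot suma}, and the factor $R_{r,1}(\dots)\blackdiamond(-)$ commutes with sums inside the bracket by the additivity of $\blackdiamond$. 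Finally, well-definedness of $\partial_{\mathrm{h}}$ on the shuffle quotient rests on the same two facts: both $h\mapsto h_1\cdot h$ and $y\mapsto R_{r,1}(\dots)\blackdiamond y$ are group homomorphisms in the variables appearing in the bracket, so they preserve the shuffle subgroup.
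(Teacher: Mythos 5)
Your proposal is correct and follows essentially the same route as the paper's proof: the same splitting of $\partial_{\mathrm{h}}^2$ into purely inner cancellations, cross terms governed by identities of the type $R_{r1}(h_1\cdot h_2,\dots,h_1\cdot h_{r+2})=R_{r+1,1}(h_1,\dots,h_{r+2})$ (coming from \eqref{compatibilidad cdot suma} and \eqref{compatibilidad suma cdot}), and the nested action term collapsed by the multiplicativity of $\blackdiamond$, with the vertical square being the standard bar-type cancellation and the mixed identity a sign-keeping direct computation using \eqref{compatibilidad cdot suma} and additivity of $\blackdiamond$. The only cosmetic difference is that you invoke $(h+h')\blackdiamond y=(h\cdot h')\blackdiamond(h\blackdiamond y)$ from \eqref{equivalente a [3.6]}, whereas the paper uses the equivalent form $hh'\blackdiamond y=h'\blackdiamond(h\blackdiamond y)$ together with \eqref{formula que usaremos} (the two are interchangeable by Remark~\ref{condicion equivalente a la segunda igualdad en la proposicion anterior}), and the paper organizes the bookkeeping through explicit face maps $d^{\jmath}_{rs}$ and a short induction for $R_{r1}=R_{r-1,1}\xcirc d^{\jmath}_{r+1}$.
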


\begin{proof} We must prove that $\partial_{\mathrm{v}}^{r,s+1}\xcirc \partial_{\mathrm{v}}^{rs} = 0$, $\partial_{\mathrm{h}}^{r+1,s}\xcirc \partial_{\mathrm{h}}^{rs} = 0$ and $\partial_{\mathrm{v}}^{r+1,s+1}\xcirc \partial_{\mathrm{h}}^{rs} = - \partial_{\mathrm{h}}^{r+1,s+1}\xcirc \partial_{\mathrm{v}}^{rs}$. The first equality is well known, while the third one follows by a direct computation using identity~\eqref{compatibilidad cdot suma}. Next we prove the second one. For $r,s\in \mathds{N}$ and $0\le\jmath\le r$, let $d^{\jmath}_{rs}\colon \mathds{Z}[H^r]\ot \overline{M}(s) \to \mathds{Z}[H^{r-1}]\ot \overline{M}(s)$ be the maps defined by
\begin{align*}
d^0_{rs}(\ov{h}) &\coloneqq (h_1\cdot h_2,\dots,h_1\cdot h_r)\ot  [h_1\cdot h_{r+1},\dots,h_1\cdot h_{r+s}], \\
d^{\jmath}_{rs}(\ov{h}) &\coloneqq (h_1,\dots h_{\jmath-1},h_{\jmath}+h_{\jmath+1},h_{\jmath+2},\dots, h_r)\ot [h_{r+1},\dots, h_{r+s}]\quad\text{for $j=1,\dots,r-1$,}\\
d^r_{rs}(\ov{h}) &\coloneqq (h_1,\dots,h_{r-1})\ot [h_{r+1},\dots,h_{r+s}].
\end{align*}
where $\ov{h}\coloneqq (h_1,\dots,h_r)\ot [h_{r+1},\dots,h_{r+s}]$. Then,
$$
\partial_{\mathrm{h}}^{rs}(f)(\ov{h})=f(d^0_{rs}(\ov{h}))+\sum_{\jmath=1}^{r-1} (-1)^{\jmath}f(d^{\jmath}_{rs}(\ov{h}))+(-1)^r R_{r-1,1}(h_1,\dots,h_r)\blackdiamond f(d^r_{rs}(\ov{h})).
$$
Let $\ov{h}\coloneqq (h_1,\dots,h_{r+1})\ot [h_{r+2},\dots,h_{r+s+1}]$ and $\mathrm{h}_{1l}\coloneqq (h_1,\dots,h_l)$. Note that
\begin{align*}
\partial_{\mathrm{h}}^{r+1,s}(\partial_{\mathrm{h}}^{rs}(f))(\ov{h}) & = \sum_{\imath=0}^{r-1} \sum_{\jmath=0}^r (-1)^{\imath+\jmath} f(d^{\imath}_{rs}(d^{\jmath}_{r+1,s}(\ov{h})))\\
& + \sum_{\imath=0}^{r-1} (-1)^{r+\imath-1} R_{r1}(\mathrm{h}_{1,r+1})\blackdiamond f(d^{\imath}_{rs}(d^{r+1}_{r+1,s}(\ov{h})))\\
& - R_{r1}(\mathrm{h}_{1,r+1})\blackdiamond \bigl(R_{r-1,1}(\mathrm{h}_{1r})\blackdiamond f(d^r_{rs}(d^{r+1}_{r+1,s}(\ov{h})))\bigr)\\
&+\sum_{\jmath=0}^r (-1)^{\jmath+r} R_{r-1,1}\left(d^{\jmath}_{r+1}(\mathrm{h}_{1,r+1})\right)\blackdiamond f(d^r_{rs}( d^{\jmath}_{r+1,s}(\ov{h}))),
\end{align*}
where
$$
d^0_{r+1}(\mathrm{h}_{1,r+1}) = (h_1\cdot h_2,\dots,h_1\cdot h_{r+1})\quad\text{and}\quad d^{\jmath}_{r+1}(\mathrm{h}_{1,r+1})= (h_1,\dots h_{\jmath-1},h_{\jmath}+h_{\jmath+1},h_{\jmath+2},\dots,h_{r+1}),
$$
for $j=1,\dots,r$. Hence, in order to check that $\partial_{\mathrm{h}}^{r+1,s}\xcirc \partial_{\mathrm{h}}^{rs} = 0$, it suffices to prove that
\begin{align}
&\sum_{\imath=0}^{r-1} \sum_{\jmath=0}^r (-1)^{\imath+\jmath} f(d^{\imath}_{rs}(d^{\jmath}_{r+1,s}(\ov{h})))=0,\label{ecua1}\\
&\sum_{\imath=0}^{r-1} (-1)^{\imath}R_{r1}(\mathrm{h}_{1,r+1})\blackdiamond f(d^{\imath}_{rs}(d^{r+1}_{r+1,s}(\ov{h}))) + \sum_{\jmath=0}^{r-1} (-1)^{\jmath+1} R_{r-1,1}\left(d^{\jmath}_{r+1}(\mathrm{h}_{1,r+1})\right)\blackdiamond f(d^r_{rs}(d^{\jmath}_{r+1,s}(\ov{h})))=0,\label{ecua2}\\
&R_{r1}(\mathrm{h}_{1,r+1})\blackdiamond \bigl(R_{r-1,1}(\mathrm{h}_{1r})\blackdiamond f(d^r_{rs}(d^{r+1}_{r+1,s}(\ov{h})))\bigr) - R_{r-1,1}\left(d^r_{r+1}(\mathrm{h}_{1,r+1})\right)\blackdiamond f(d^r_{rs}(d^{\jmath}_{r+1,s}(\ov{h})))=0.\label{ecua3}
\end{align}
Equality~\eqref{ecua1} follows immediately from the fact that, by the associativity of $+$ and identity~\eqref{compatibilidad cdot suma},
\begin{equation}\label{simplicial}
d^{\imath}_{rs}\xcirc d^{\jmath}_{r+1,s}=d^{\jmath-1}_{r+1,s}\xcirc d^{\imath}_{rs}\qquad\text{for $0\le \imath<\jmath\le r$.}
\end{equation}
In order to prove~\eqref{ecua2} it suffices to verify that
\begin{equation}\label{ecua4}
d^{\jmath}_{rs}\xcirc d^{r+1}_{r+1,s}=d^r_{rs}\xcirc d^{\jmath}_{r+1,s}\quad\text{and}\quad R_{r1} = R_{r-1,1}\xcirc d^{\jmath}_{r+1} \qquad\text{for $j=0,\dots,r-1$.}
\end{equation}
The first equalities are clear. We prove the second equalities by induction on $r$. The case $r=1$ follows from the fact that $R_{11} = R_{01}\xcirc d^0_2$ by~\eqref{compatibilidad cdot suma} and~\eqref{compatibilidad suma cdot}. Assume that the result is true for $r$. Then
$$
R_{r+1,1} = R_{r1}\xcirc d^0_{r+2} = R_{r-1,1}\xcirc d^{\jmath}_{r+1}\xcirc d^0_{r+2} = R_{r-1,1}\xcirc d^0_{r+2}\xcirc d^{\jmath+1}_{r+1} = R_{r1}\xcirc d^{\jmath+1}_{r+1},
$$
which proves the case $\jmath>0$ for $r+1$ (the case $\jmath = 0$ holds by~\eqref{compatibilidad cdot suma} and~\eqref{compatibilidad suma cdot}).
	
In order to check equality~\eqref{ecua3} it suffices to show that
$$
d^r_{rs}\xcirc d^{r+1}_{r+1,s}=d^r_{rs}\xcirc d^r_{r+1,s}\quad\text{and}\quad R_{r-1,1}(\mathrm{h}_{1r})R_{r1}(\mathrm{h}_{1,r+1}) = R_{r-1,1}\left(d^r_{r+1}(\mathrm{h}_{1,r+1}) \right).
$$
The first equality is clearly true; while the second one follows immediately from~\eqref{formula que usaremos}.
\end{proof}

\begin{notation} We let $\ho_{\blackdiamond}^*(H,I)$ denote the cohomology groups of $\bigl(\wh{C}_N^{**}(H,I), \partial_{\mathrm{h}}^{**}, \partial_{\mathrm{v}}^{**}\bigr)$.
\end{notation}

\begin{remark}
When $\blackdiamond$ is the trivial action, the double cochain complex $\bigl(\wh{C}_N^{**}(H,I),\partial_{\mathrm{h}}^{**}, \partial_{\mathrm{v}}^{**}\bigr)$ is canonically isomorphic to the double normalized cochain complex introduced in~\cite{LV}*{Section 4}.
\end{remark}

Assume now that $I$ is a trivial linear cycle set endowed with a map $\blackdiamond\colon H\times I\to H$ satisfying conditions~\eqref{eqq 2}. Set $y\Yleft h\coloneqq 0$. Note that under these assumptions conditions (1)--(4) of Proposition~\ref{primeras propiedades} are fulfilled and the maps $y\mapsto h\blackdiamond y$ are permutations of $I$. Moreover, conditions~\eqref{segunda equivalencia de (2.1)2 cociclos en el centro}, \eqref{equivalente a [3.6]}, \eqref{condicion equivalente compatibilidad suma cdot con hmassimp1 nuevo} and \eqref{condicion equivalente compatibilidad suma cdot con hmassimp2 nuevo} are also fulfilled.

\begin{proposition}\label{significado de cociclos} A pair $(\beta,f)\in \wh{C}_N^{02}(H,I)\oplus \wh{C}_N^{11}(H,I)$ is a cocycle of $\Tot\bigl(\wh{C}_N^{**}(H,I),\partial_{\mathrm{h}}^{**},\partial_{\mathrm{v}}^{**}\bigr)$ if and only if conditions~\eqref{condicion de cociclo},~\eqref{normalidad y cociclo abeliano},~\eqref{segunda equivalencia de (2.1)1}  and~\eqref{condicion equivalente compatibilidad suma cdot con hmassimp3 nuevo} are satisfied.
\end{proposition}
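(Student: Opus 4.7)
The plan is to unwind the cocycle condition for $(\beta,f)$ in the total complex one bidegree at a time. Since $\wh{C}_N^{rs}$ is defined only for $s\ge 1$, the total-degree-$3$ component is $\wh{C}_N^{0,3}\oplus \wh{C}_N^{1,2}\oplus \wh{C}_N^{2,1}$, so $(\beta,f)$ is a cocycle exactly when
$$\partial_{\mathrm v}^{0,3}(\beta)=0,\qquad \partial_{\mathrm h}^{1,2}(\beta)+\partial_{\mathrm v}^{1,2}(f)=0, \qquad \partial_{\mathrm h}^{2,1}(f)=0.$$
Before expanding these, I would dispatch \eqref{normalidad y cociclo abeliano}: the normalizations $\beta(h,0)=\beta(0,h)=0$ and $f(h,0)=f(0,h)=0$ are built into $\wh{C}_N^{**}$ via the use of $\ov{H}=H\setminus\{0\}$, while the single nontrivial shuffle relation on $\wh{C}_N^{0,2}$ (the $\sh_{1,1}$ relation) is precisely $\beta(h,h')=\beta(h',h)$.

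The remaining three identities are obtained by direct substitution into the formulas for $\partial_{\mathrm h}$ and $\partial_{\mathrm v}$. For $\partial_{\mathrm v}^{0,3}(\beta)$ with $r=0$, $s=2$, one computes
$$\partial_{\mathrm v}^{0,3}(\beta)([h_1,h_2,h_3])=\beta(h_2,h_3)-\beta(h_1+h_2,h_3)+\beta(h_1,h_2+h_3)-\beta(h_1,h_2),$$
whose vanishing is exactly \eqref{condicion de cociclo}. For $\partial_{\mathrm h}^{2,1}(f)$ with $r=s=1$, using $R_{1,1}(h_1,h_2)=h_1\cdot h_2$,
$$\partial_{\mathrm h}^{2,1}(f)(h_1,h_2,[h_3])=f(h_1\cdot h_2,[h_1\cdot h_3])-f(h_1+h_2,[h_3])+(h_1\cdot h_2)\blackdiamond f(h_1,[h_3]),$$
which vanishes iff \eqref{condicion equivalente compatibilidad suma cdot con hmassimp3 nuevo} holds (recall $\Yleft=0$ here). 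For the mixed condition, I would first note that $R_{0,1}(h_1)=0\cdot h_1=h_1$ (a quick consequence of $a\cdot 0=0$, the bijectivity of left translations and identity~\eqref{compatibilidad suma cdot}); then
$$\partial_{\mathrm h}^{1,2}(\beta)(h_1,[h_2,h_3])=\beta(h_1\cdot h_2,h_1\cdot h_3)-h_1\blackdiamond\beta(h_2,h_3),$$
$$\partial_{\mathrm v}^{1,2}(f)(h_1,[h_2,h_3])=f(h_1,h_2+h_3)-f(h_1,h_2)-f(h_1,h_3),$$
and the vanishing of the sum is, after rearrangement, precisely \eqref{segunda equivalencia de (2.1)1}.

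The whole argument is mechanical and its only delicate points are clerical: keeping track of the leading sign $(-1)^r$ in $\partial_{\mathrm v}^{r,s+1}$, and handling the degenerate boundary value $R_{0,1}(h_1)=h_1$ that appears in the last term of $\partial_{\mathrm h}^{1,2}(\beta)$. I do not anticipate any genuine obstacle: once these are correctly set, each of the three cocycle equations matches a single condition in the statement on the nose.
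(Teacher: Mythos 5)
Your overall strategy coincides with the paper's (its proof is exactly this direct computation), and most of your steps are right: the normalizations are indeed built into the use of $\ov{H}$, the $\sh_{1,1}$-relation is the symmetry of $\beta$, $R_{01}(h_1)=0\cdot h_1=h_1$, and your expansions of $\partial_{\mathrm v}^{03}(\beta)$, $\partial_{\mathrm h}^{12}(\beta)$, $\partial_{\mathrm v}^{12}(f)$ and $\partial_{\mathrm h}^{21}(f)$ agree with the displayed definitions of the differentials; in particular the components $\partial_{\mathrm v}^{03}(\beta)=0$ and $\partial_{\mathrm h}^{21}(f)=0$ do give \eqref{condicion de cociclo} and \eqref{condicion equivalente compatibilidad suma cdot con hmassimp3 caso simple} on the nose.

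The gap is in the mixed bidegree. With the two formulas you (correctly) wrote, the vanishing of $\partial_{\mathrm h}^{12}(\beta)+\partial_{\mathrm v}^{12}(f)$ reads
\begin{equation*}
\beta(h_1\cdot h_2,h_1\cdot h_3)+f(h_1,h_2+h_3)=h_1\blackdiamond\beta(h_2,h_3)+f(h_1,h_2)+f(h_1,h_3),
\end{equation*}
and this is \emph{not} \eqref{segunda equivalencia de (2.1)1} ``after rearrangement'': it is \eqref{segunda equivalencia de (2.1)1} with $f$ replaced by $-f$, and the two conditions differ by $2\bigl(f(h_1,h_2+h_3)-f(h_1,h_2)-f(h_1,h_3)\bigr)$, so they are not equivalent in general. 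The discrepancy is an overall-sign ambiguity in $\partial_{\mathrm v}$ present in the paper itself: the formula for $\partial_{\mathrm v}^{02}$ quoted in the proof of Corollary~\ref{equivalencia} is the negative of what the displayed definition of $\partial_{\mathrm v}$ gives, and it is with that (negated) vertical differential that the mixed component matches \eqref{segunda equivalencia de (2.1)1} exactly. So you must either say explicitly that you work with $-\partial_{\mathrm v}$ (harmless: it changes neither $\partial_{\mathrm v}^2=0$, nor the anticommutation with $\partial_{\mathrm h}$, nor the cohomology), or record that the $2$-cocycles of the literal total complex are precisely the pairs $(\beta,-f)$ with $(\beta,f)$ satisfying the four listed conditions; as written, your final identification in the mixed component does not hold.
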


\begin{proof} This follows by a direct computation (take into account~\eqref{condicion equivalente compatibilidad suma cdot con hmassimp3 caso simple}).
\end{proof}

\begin{corollary}\label{equivalencia} Under the assumptions above Proposition~\ref{significado de cociclos}, there is a bijective correspondence
$$
\Ext_{\blackdiamond}(H,I) \longleftrightarrow \ho_{\blackdiamond}^2(H,I).
$$

\begin{proof}
By the discussion at the beginning of Section~\ref{Extensions of linear cycle sets}, Theorem~\ref{condiciones para ser extension, cociclos en el centro} and the discussion above Proposition~\ref{significado de cociclos}, we know that \eqref{extension} is an extension of linear cycle sets if and only if~\eqref{condicion de cociclo},~\eqref{normalidad y cociclo abeliano},~\eqref{segunda equivalencia de (2.1)1} and~\eqref{condicion equivalente compatibilidad suma cdot con hmassimp3 nuevo} are fulfilled, which by Proposition~\ref{significado de cociclos} means that $(\beta, f)$ is a cocycle of $\Tot\bigl(\wh{C}_N^{**}(H,I),\partial_{\mathrm{h}}^{**},\partial_{\mathrm{v}}^{**}\bigr)$. Now the result follows immediately from the fact that
$$
\partial_{\mathrm{v}}^{02}(\varphi)(h_1,h_2)= - \varphi(h_2)+ \varphi(h_1+h_2) - \varphi(h_1)\qquad \text{and}\qquad   \partial_{\mathrm{h}}^{11}(\varphi)(h_1,h_2)= \varphi(h_1\cdot h_2) - h_1\blackdiamond \varphi(h_2),
$$
and Remark~\ref{equivalecia entre extensiones producto, caso I trivial}.
\end{proof}
\end{corollary}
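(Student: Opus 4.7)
The plan is to assemble the bijection from three ingredients already developed in this paper: a normal-form representation for extensions (Remark~\ref{toda extension es equivalente a una...}), a translation between the extension axioms and the cocycle condition (Proposition~\ref{significado de cociclos}), and a translation between the equivalence relation on extensions and the coboundary relation (Remark~\ref{equivalecia entre extensiones producto, caso I trivial}).

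First, I would fix an extension $(\iota, B, \pi)$ representing a class in $\Ext_{\blackdiamond}(H, I)$ and a set-theoretic section $s$ of $\pi$ with $s(0) = 0$, producing maps $\beta$ and $f$ via~\eqref{construccion de beta} and Subsection~\ref{formula for cdot}. Remark~\ref{toda extension es equivalente a una...} shows the extension is equivalent to $(\iota, I \times_{\beta, f}^{\blackdiamond, 0} H, \pi)$. Because $I$ is trivial and $\Yleft = 0$, items~(1)--(4) of Proposition~\ref{primeras propiedades} are automatic, and the normalization and symmetry conditions $\beta(h, 0) = \beta(0, h) = 0$, $\beta(h, h') = \beta(h', h)$, $f(h, 0) = f(0, h) = 0$ allow the pair $(\beta, f)$ to be interpreted as an element of $\wh{C}_N^{02}(H, I) \oplus \wh{C}_N^{11}(H, I)$; in particular, the single shuffle defining $\ov{M}(2)$ is precisely the antisymmetrization killed by $\beta(h, h') = \beta(h', h)$.

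Second, I would invoke Proposition~\ref{significado de cociclos}, which states that $(\beta, f)$ defines a linear cycle set extension if and only if it is a $2$-cocycle of the total complex $\Tot\bigl(\wh{C}_N^{**}(H, I), \partial_{\mathrm{h}}^{**}, \partial_{\mathrm{v}}^{**}\bigr)$. This yields a surjection from the set of normalized $2$-cocycles onto $\Ext_{\blackdiamond}(H, I)$. Third, I would compute the coboundary of a normalized $1$-cochain $\varphi \colon H \to I$ directly from the formulas for $\partial_{\mathrm{h}}^{**}$ and $\partial_{\mathrm{v}}^{**}$, obtaining
\[
\partial_{\mathrm{v}}^{02}(\varphi)(h_1, h_2) = -\varphi(h_2) + \varphi(h_1 + h_2) - \varphi(h_1) \quad\text{and}\quad \partial_{\mathrm{h}}^{11}(\varphi)(h_1, h_2) = \varphi(h_1 \cdot h_2) - h_1 \blackdiamond \varphi(h_2).
\]
These are verbatim the relations~\eqref{equivalencia a nivel de qrupos} and~\eqref{tercera condicion caso I trivial} (with $\Yleft = 0$) that, by Remark~\ref{equivalecia entre extensiones producto, caso I trivial}, characterize equivalence of two extensions $(\iota, I \times_{\beta, f}^{\blackdiamond, 0} H, \pi)$ and $(\iota, I \times_{\beta', f'}^{\blackdiamond, 0} H, \pi)$. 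Hence two cocycles differ by a coboundary if and only if the associated extensions are equivalent, giving the bijection.

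The main obstacle will be purely bookkeeping around signs and the shuffle normalization: one must check that the shuffle for $l = 1$, $s = 2$ is indeed the relation encoding the symmetry of $\beta$, and that the signs attached to $\partial_{\mathrm{v}}^{02}$ and $\partial_{\mathrm{h}}^{11}$ align so that a coboundary corresponds to $\beta - \beta'$ and $f - f'$ rather than their negatives. Once these compatibilities are verified, the three ingredients assemble mechanically into the desired bijection $\Ext_{\blackdiamond}(H, I) \longleftrightarrow \ho_{\blackdiamond}^2(H, I)$.
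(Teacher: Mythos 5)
Your proposal follows the paper's own argument essentially step for step: it passes to product extensions via Remark~\ref{toda extension es equivalente a una...}, identifies the extension conditions with the $2$-cocycle condition of the total complex via Proposition~\ref{significado de cociclos} (under the standing assumptions and Theorem~\ref{condiciones para ser extension, cociclos en el centro}), and matches equivalence of extensions with coboundaries using the explicit formulas for $\partial_{\mathrm{v}}^{02}$ and $\partial_{\mathrm{h}}^{11}$ together with Remark~\ref{equivalecia entre extensiones producto, caso I trivial}. This is the same route, with the same ingredients, as the proof in the paper, so the proposal is correct.
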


\section{Cohomology of linear cycle sets}\label{Cohomology of linear cycle sets}
Let $H$ and $I$ be linear cycle sets and $\blackdiamond\colon H\times I\to I$ a map. Assume that $I$ is trivial, that conditions~(1) and~(2) of Proposition~\ref{primeras propiedades} are fulfilled and that the maps $y\mapsto h\blackdiamond y$ are permutations. In the previous section we introduced a cohomology theory $\ho_{\blackdiamond}^*(H,I)$ such that $\ho_{\blackdiamond}^2(H,I)$ classify the equivalence classes of extensions $(\iota,B,\pi)$, of $H$ by $I$, with $\iota(I)\subseteq \Soc(B)$. The groups $\ho_{\blackdiamond}^*(H,I)$ are the cohomology groups of the double complex $\bigl(\wh{C}_N^{**}(H,I),\partial_{\mathrm{h}}^{**},\partial_{\mathrm{v}}^{**}\bigr)$ introduced in Theorem~\ref{principal}. In this section we will modify the cohomology theory, in order to classify extensions of $H$ by $I$, that not necessarily satisfy $\iota(I)\subseteq \Soc(B)$. For this, we fix $H$, $I$ and $\blackdiamond$ as above, and a map $\Yleft \colon I\times H\to I$, and we assume that conditions~\eqref{eqq 1}--\eqref{eqq 3} are fulfilled (by Remark~\ref{otra forma} this means that items (1)--(4) of Proposition~\ref{primeras propiedades} and conditions~\eqref{segunda equivalencia de (2.1)2 cociclos en el centro}, \eqref{equivalente a [3.6]}, \eqref{condicion equivalente compatibilidad suma cdot con hmassimp1 nuevo} and \eqref{condicion equivalente compatibilidad suma cdot con hmassimp2 nuevo} are satisfied). We are going to define diagonal maps $D_{rs}^{r+s,1}$, from $\wh{C}_N^{rs}(H,I)$ to $\wh{C}_N^{r+s,1}(H,I)$, that yield a degree~$1$ endomorphism $D$ of $\Tot\bigl(\wh{C}_N^{**}(H,I)\bigl)$ such that $\bigl(\Tot(\wh{C}_N^{**}(H,I)),\partial+D\bigr)$ is a cochain complex (where $\partial\coloneqq \partial_h+\partial_v$), and we are going to prove the second cohomology group $\ho_{\blackdiamond,\Yleft}^2(H,I)$ of $\bigl(\Tot(\wh{C}_N^{**}(H,I)),\partial+D\bigr)$ is in bijection with $\Ext_{\blackdiamond,\Yleft}(H;I)$. When $\Yleft =0$, then $D=0$ and the results obtained in this section coincide with the ones obtained in Section~\ref{Cohomology of linear cycle sets1}.

\smallskip

\begin{notation} We let $\wh{C}_N^n(H,I)$ denote the $n$th group of $\Tot(\wh{C}_N^{**}(H,I))$. That is,
$$
\wh{C}_N^n(H,I)\coloneqq \bigoplus_{r=0}^{n-1}\wh{C}_N^{r,n-r}(H,I).
$$
\end{notation}

\begin{notation} For the sake of brevity we introduce the following notations:

\begin{itemize}

\item[-] Given $h_1,\dots,h_{r+s}$ we set $\mathrm{h}_{1,r+s} = h_1,\dots,h_{r+s}\coloneqq (h_1,\dots,h_r)\ot [h_{r+1},\dots,h_{r+s}]$.

\item[-] Given $h_1,\dots,h_r\in \ov{H}$ and $1\le i\le j\le r$, we write $\mathrm{h}_{ij}\coloneqq h_i,h_{i+1},\dots,h_j$.

\item[-] Given $h,h_1,\dots,h_r\in \ov{H}$, we set $h\cdot \mathrm{h}_{1r}\coloneqq h\cdot h_1,\dots,h\cdot h_{r+1}$.

\item[-] For $g\colon \wh{C}^N_{rs}(H)\to H$ and $f\in \wh{C}_N^{rs}(H,I)$, we set $(g\blackdiamond f) (\mathrm{h}_{1,r+s})\coloneqq g(\mathrm{h}_{1,r+s})\blackdiamond  f(\mathrm{h}_{1,r+s})$.

\end{itemize}
\end{notation}

\noindent For all $r\ge0$ and $s\ge 1$, we define $R_{rs}\colon \wh{C}^N_{rs}(H)\to H$, by
$$
R_{rs}(\mathrm{h}_{1,r+s})\coloneqq (h_1+\dots+h_r)\cdot (h_{r+1}+\dots+h_{r+s}).
$$
Then, for all $r\ge0$ and $s\ge 1$, we define $D_{rs}^{r+s,1}\colon \wh{C}_N^{rs}(H,I)\to \wh{C}_N^{r+s,1}(H,I)$,by
$$
D_{rs}^{r+s,1}(f)(\mathrm{h}_{1,r+s+1})=(-1)^{r+s} (R_{rs}\blackdiamond f)(\mathrm{h}_{1,r+s}) \Yleft R_{r+s,1}(\mathrm{h}_{1,r+s+1}).
$$
Finally, for $n\ge 1$ we let $D_n^{n+1}\colon \wh{C}_N^n(H,I)\to \wh{C}_N^{n+1}(H,I)$ denote the map, given by
$$
D_n^{n+1}(f_{0n},\dots,f_{n-1,1})\coloneqq \sum_{k=1}^n D_{n-k,k}^{n1}(f_{n-k,k}).
$$

Figure~\ref{ddiagonales} at the end of this section shows in black the double cochain complex $\bigl(\wh{C}_N^{**}(H,I),\partial_{\mathrm{h}}^{**},\partial_{\mathrm{v}}^{**}\bigr)$, and in green the maps $D_{rs}^{r+s,1}$.

\smallskip

We next prove that $(\partial+D)^2=0$, in other words, that $\bigl(\wh{C}_N^*(H,I),\partial+D\bigr)$ is a complex. For this we note that the restriction of $\partial+D$ to the summand $\wh{C}_N^{rs}(H,I)$ of $\wh{C}_N^{r+s}(H,I))$ is the sum $\partial_{\mathrm{h}}^{r+1,s} + \partial_{\mathrm{v}}^{r,s+1} + D_{rs}^{r+s,1}$. Therefore we must compute the sum of the nine compositions in the right hand of
\begin{equation}\label{nueve terminos}
(\partial_{\mathrm{h}}+\partial_{\mathrm{v}}+D)\xcirc (\partial_{\mathrm{h}}+\partial_{\mathrm{v}}+D)=\partial_{\mathrm{h}}\xcirc \partial_{\mathrm{h}}+\partial_{\mathrm{v}}\xcirc \partial_{\mathrm{h}}+D\xcirc \partial_{\mathrm{h}} +
\partial_{\mathrm{h}}\xcirc \partial_{\mathrm{v}}+\partial_{\mathrm{v}}\xcirc \partial_{\mathrm{v}}+D\xcirc \partial_{\mathrm{v}} +\partial_{\mathrm{h}}\xcirc D+\partial_{\mathrm{v}}\xcirc D+D\xcirc D.
\end{equation}

\begin{lemma}\label{lema partial v D} $\partial_{\mathrm{v}}^{r2}\xcirc D_{r-k,k}^{r1}=0$, for all $r\ge k\ge 1$.
\end{lemma}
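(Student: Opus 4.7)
The plan is to compute both sides directly; the lemma should fall out from distributivity of $\cdot$ over $+$ (identity \eqref{compatibilidad cdot suma}) combined with the additivity of $\Yleft$ in its second argument (identity \eqref{segunda equivalencia de (2.1)2 cociclos en el centro}, which is guaranteed by hypothesis \eqref{eqq 3}).

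First I would fix $f\in \wh{C}_N^{r-k,k}(H,I)$ and unwind the definition of $D_{r-k,k}^{r,1}(f)$. Setting $a(\mathrm{h}_{1r})\coloneqq (R_{r-k,k}\blackdiamond f)(\mathrm{h}_{1r})$, one sees that
\[
D_{r-k,k}^{r,1}(f)(\mathrm{h}_{1,r+1}) = (-1)^{r}\, a(\mathrm{h}_{1r}) \Yleft \bigl((h_1+\cdots+h_r)\cdot h_{r+1}\bigr),
\]
so the last variable $h_{r+1}$ enters only through the factor $(h_1+\cdots+h_r)\cdot h_{r+1}$. The key observation is that $a(\mathrm{h}_{1r})$ is independent of this last variable.

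Next I would specialize the formula for $\partial_{\mathrm{v}}^{r,s+1}$ at $s=1$ applied to $g=D_{r-k,k}^{r,1}(f)$. Writing $z\coloneqq h_1+\cdots+h_r$, the three surviving terms yield
\[
\partial_{\mathrm{v}}^{r,2}(g)(\mathrm{h}_{1,r+2}) = a(\mathrm{h}_{1r})\Yleft (z\cdot h_{r+2}) \,-\, a(\mathrm{h}_{1r})\Yleft \bigl(z\cdot (h_{r+1}+h_{r+2})\bigr) \,+\, a(\mathrm{h}_{1r})\Yleft (z\cdot h_{r+1}),
\]
after the global factor $(-1)^{2r}=1$ is absorbed. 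Now \eqref{compatibilidad cdot suma} gives $z\cdot (h_{r+1}+h_{r+2}) = z\cdot h_{r+1}+z\cdot h_{r+2}$, and then \eqref{segunda equivalencia de (2.1)2 cociclos en el centro} splits the middle $\Yleft$ along this sum. The three summands telescope to zero, finishing the proof.

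The computation is routine once the definitions are unravelled, so there is no real obstacle; the only mildly subtle point is recognizing that the $R_{r-k,k}\blackdiamond f$ factor does not involve the last variable, so it behaves as a constant when $\partial_{\mathrm{v}}^{r,2}$ is applied. This is what allows the additivity of $\Yleft$ in its second argument to do all the work.
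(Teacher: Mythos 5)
Your proof is correct and follows essentially the same route as the paper's: unwind $D_{r-k,k}^{r1}$ and $\partial_{\mathrm{v}}^{r2}$, note that the factor $(R_{r-k,k}\blackdiamond f)(\mathrm{h}_{1r})$ does not involve the last variable, and cancel the three terms using $z\cdot(h_{r+1}+h_{r+2})=z\cdot h_{r+1}+z\cdot h_{r+2}$ from~\eqref{compatibilidad cdot suma} together with the additivity of $\Yleft$ in its second argument~\eqref{segunda equivalencia de (2.1)2 cociclos en el centro}. The signs and the appeal to the standing hypotheses (via~\eqref{eqq 3}) are all handled correctly, so nothing needs to be changed.
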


\begin{proof} Since, for all $\varphi\in \wh{C}_N^{r1}(H,I)$,
$$
\partial_{\mathrm{v}}^{r2}(\varphi)(\mathrm{h}_{1,r+2})=(-1)^r\varphi(\mathrm{h}_{1r},h_{r+2}) +(-1)^{r+1}\varphi(\mathrm{h}_{1r},h_{r+1}+h_{r+2}) + (-1)^r\varphi(\mathrm{h}_{1,r+1}),
$$
for all $f\in \wh{C}_N^{r-k,k}(H,I)$, we have
\begin{align*}
\partial_{\mathrm{v}}^{r2}\xcirc D_{r-k,k}^{r1}(f)(\mathrm{h}_{1,r+2}])&=(-1)^r D_{r-k,k}^{r,1}(f)(\mathrm{h}_{1r},h_{r+2})\\
& +(-1)^{r+1}D_{r-k,k}^{r1}(f)(\mathrm{h}_{1r},h_{r+1}+h_{r+2})\\
& +(-1)^r D_{r-k,k}^{r1}(f)(\mathrm{h}_{1,r+1})\\
& = (R_{r-k,k}\blackdiamond f)(\mathrm{h}_{1r})\Yleft R_{r1}(\mathrm{h}_{1r},h_{r+2})\\
& - (R_{r-k,k}\blackdiamond f)(\mathrm{h}_{1r})\Yleft R_{r1}(\mathrm{h}_{1r},h_{r+1}+h_{r+2})\\
& + (R_{r-k,k}\blackdiamond f)(\mathrm{h}_{1r})\Yleft R_{r1}(\mathrm{h}_{1,r+1})\\
& =0,
\end{align*}
where the last equality is true by identity~\eqref{segunda equivalencia de (2.1)2 cociclos en el centro} and the fact that
\begin{align*}
R_{r1}(\mathrm{h}_{1r},h_{r+1}+h_{r+2}) & = (h_1+\cdots + h_r)\cdot (h_{r+1}+h_{r+2})\\
& = (h_1+\cdots+h_r)\cdot h_{r+1}+(h_1+\cdots+h_r)\cdot h_{r+2}\\
&= R_{r1}(\mathrm{h}_{1,r+1})+R_{r1}(\mathrm{h}_{1r},h_{r+2}),
\end{align*}
in which the second equality holds by~\eqref{compatibilidad cdot suma}.
\end{proof}

\begin{lemma}\label{lema partial h D} For each $\beta\in \wh{C}_N^{r-k,k}(H,I)$, the following equality holds:
\begin{align*}
(\partial_{\mathrm{h}}^{r+1,1} + D_{r1}^{r+1,1}) &\xcirc D_{r-k,k}^{r1}(\beta)(\mathrm{h}_{1,r+2}) = (-1)^r R_{r-k+1,k}(\mathrm{h}_{1,r+1}) \blackdiamond \beta(h_1\cdot \mathrm{h}_{2,r+1}) \Yleft R_{r+1,1}(\mathrm{h}_{1,r+2})\\
& + \sum_{\jmath=1}^{r-k} (-1)^{r+\jmath} R_{r-k+1,k}(\mathrm{h}_{1,r+1}) \blackdiamond \beta(\mathrm{h}_{1,\jmath-1},h_{\jmath}+h_{\jmath+1},\mathrm{h}_{\jmath+2,r+1})\Yleft R_{r+1,1}(\mathrm{h}_{1,r+2})\\
& + \sum_{\jmath=r-k+1}^{r} (-1)^{r+\jmath}R_{r-k,k+1}(\mathrm{h}_{1,r+1})\blackdiamond \beta(\mathrm{h}_{1,\jmath-1},h_{\jmath}+h_{\jmath+1},\mathrm{h}_{\jmath+2,r+1})\Yleft R_{r+1,1}(\mathrm{h}_{1,r+2})\\
& - R_{r-k,k+1}(\mathrm{h}_{1,r+1}) \blackdiamond \beta(\mathrm{h}_{1r})\Yleft R_{r+1,1}(\mathrm{h}_{1,r+2}).
\end{align*}
\end{lemma}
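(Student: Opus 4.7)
The plan is to expand $\gamma \coloneqq D_{r-k,k}^{r1}(\beta)$ explicitly, namely
$\gamma(\mathrm{h}_{1,r+1}) = (-1)^r R_{r-k,k}(\mathrm{h}_{1r})\blackdiamond\beta(\mathrm{h}_{1r})\Yleft R_{r1}(\mathrm{h}_{1,r+1})$,
and then apply both $\partial_{\mathrm{h}}^{r+1,1}$ and $D_{r1}^{r+1,1}$ to $\gamma$ and simplify. The definition of $\partial_{\mathrm{h}}^{r+1,1}$ generates three kinds of summands on $\gamma$: the ``$d^0$'' term $\gamma(h_1\cdot\mathrm{h}_{2,r+1},h_1\cdot h_{r+2})$, the $r$ ``face'' terms $(-1)^{\jmath}\gamma(\mathrm{h}_{1,\jmath-1},h_\jmath+h_{\jmath+1},\mathrm{h}_{\jmath+2,r+1},h_{r+2})$ for $\jmath=1,\dots,r$, and the final ``action'' term $(-1)^{r+1}R_{r1}(\mathrm{h}_{1,r+1})\blackdiamond\gamma(\mathrm{h}_{1r},h_{r+2})$. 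The expected match-up is that the $d^0$-term and the $r$ face-terms assemble into the first three sums on the right-hand side, while the last action-term must be combined with $D_{r1}^{r+1,1}(\gamma)$ to produce the last term.

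For the $d^0$-term and the face-terms the needed simplifications are routine applications of the cycle set axioms. Equation~\eqref{compatibilidad cdot suma} distributes $h_1\cdot$ across sums and~\eqref{compatibilidad suma cdot} collapses the resulting expressions, giving
\begin{equation*}
R_{r-k,k}(h_1\cdot\mathrm{h}_{2,r+1}) = R_{r-k+1,k}(\mathrm{h}_{1,r+1})\quad\text{and}\quad R_{r,1}(h_1\cdot\mathrm{h}_{2,r+1},h_1\cdot h_{r+2}) = R_{r+1,1}(\mathrm{h}_{1,r+2}),
\end{equation*}
which reproduce the first line of the right-hand side (signs included). For the face-terms, associativity of $+$ yields $R_{r,1}(\mathrm{h}_{1,\jmath-1},h_\jmath+h_{\jmath+1},\mathrm{h}_{\jmath+2,r+1},h_{r+2}) = R_{r+1,1}(\mathrm{h}_{1,r+2})$, while $R_{r-k,k}$ of the collapsed sequence equals $R_{r-k+1,k}(\mathrm{h}_{1,r+1})$ for $\jmath\le r-k$ and $R_{r-k,k+1}(\mathrm{h}_{1,r+1})$ for $\jmath\ge r-k+1$; splitting $\sum_{\jmath=1}^r$ at $\jmath=r-k$ yields the second and third sums.

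The main obstacle is the fourth term. One has to combine the leftover action summand
\begin{equation*}
-R_{r1}(\mathrm{h}_{1,r+1})\blackdiamond\bigl[R_{r-k,k}(\mathrm{h}_{1r})\blackdiamond\beta(\mathrm{h}_{1r})\Yleft R_{r1}(\mathrm{h}_{1r},h_{r+2})\bigr]
\end{equation*}
coming from $\partial_{\mathrm{h}}^{r+1,1}(\gamma)$ with
\begin{equation*}
D_{r1}^{r+1,1}(\gamma)(\mathrm{h}_{1,r+2}) = -R_{r1}(\mathrm{h}_{1,r+1})\blackdiamond\bigl[R_{r-k,k}(\mathrm{h}_{1r})\blackdiamond\beta(\mathrm{h}_{1r})\Yleft R_{r1}(\mathrm{h}_{1,r+1})\bigr]\Yleft R_{r+1,1}(\mathrm{h}_{1,r+2}),
\end{equation*}
and read off the single term $-R_{r-k,k+1}(\mathrm{h}_{1,r+1})\blackdiamond\beta(\mathrm{h}_{1r})\Yleft R_{r+1,1}(\mathrm{h}_{1,r+2})$. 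The key move is the factorization $R_{r+1,1}(\mathrm{h}_{1,r+2}) = R_{r1}(\mathrm{h}_{1,r+1})\cdot R_{r1}(\mathrm{h}_{1r},h_{r+2})$, a direct instance of~\eqref{compatibilidad suma cdot} applied to $(h_1+\cdots+h_{r+1})\cdot h_{r+2}$. With $h=R_{r1}(\mathrm{h}_{1,r+1})$, $h'=R_{r1}(\mathrm{h}_{1r},h_{r+2})$ and $y=R_{r-k,k}(\mathrm{h}_{1r})\blackdiamond\beta(\mathrm{h}_{1r})$, identity~\eqref{condicion equivalente compatibilidad suma cdot con hmassimp2 nuevo} precisely packages the two contributions above into $-R_{r1}(\mathrm{h}_{1,r+1})\blackdiamond\bigl[R_{r-k,k}(\mathrm{h}_{1r})\blackdiamond\beta(\mathrm{h}_{1r})\bigr]\Yleft R_{r+1,1}(\mathrm{h}_{1,r+2})$. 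Finally~\eqref{formula que usaremos}, applied to $(h_1+\cdots+h_{r-k})\cdot\bigl((h_{r-k+1}+\cdots+h_r)+h_{r+1}\bigr)$, gives the brace identity $R_{r-k,k+1}(\mathrm{h}_{1,r+1}) = R_{r-k,k}(\mathrm{h}_{1r})\,R_{r1}(\mathrm{h}_{1,r+1})$, after which~\eqref{asociatividad de diamante negro nueva} collapses the composite action into $R_{r-k,k+1}(\mathrm{h}_{1,r+1})\blackdiamond\beta(\mathrm{h}_{1r})$. The delicate point is spotting the factorization of $R_{r+1,1}$ so that~\eqref{condicion equivalente compatibilidad suma cdot con hmassimp2 nuevo} fits with the $y\Yleft R_{r1}(\mathrm{h}_{1r},h_{r+2})$ already present in the expansion of the action summand; once that match is recognised, the rest is sign and brace bookkeeping.
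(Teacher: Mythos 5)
Your proposal is correct and follows essentially the same route as the paper's proof: expand $D_{r-k,k}^{r1}(\beta)$, apply $\partial_{\mathrm{h}}^{r+1,1}$ and $D_{r1}^{r+1,1}$, simplify the $d^0$- and face-terms with \eqref{compatibilidad cdot suma}--\eqref{compatibilidad suma cdot} and the split of the sum at $\jmath=r-k$, and then merge the two leftover terms via the factorization $R_{r+1,1}(\mathrm{h}_{1,r+2})=R_{r1}(\mathrm{h}_{1,r+1})\cdot R_{r1}(\mathrm{h}_{1r},h_{r+2})$ together with \eqref{condicion equivalente compatibilidad suma cdot con hmassimp2 nuevo}, \eqref{formula que usaremos} and \eqref{asociatividad de diamante negro nueva}. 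All the identities are invoked correctly, including the key final step, so no gaps.
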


\begin{proof} By definition,
\begin{align*}
(\partial_{\mathrm{h}}^{r+1,1} + D_{r1}^{r+1,1})\xcirc D_{r-k,k}^{r1}(\beta)(\mathrm{h}_{1,r+2}) &= D_{r-k,k}^{r1}(\beta)(h_1\cdot \mathrm{h}_{2,r+2})\\
& + \sum_{\jmath=1}^r (-1)^{\jmath} D_{r-k,k}^{r1}(\beta)(\mathrm{h}_{1,\jmath-1},h_{\jmath}+h_{\jmath+1},\mathrm{h}_{\jmath+2,r+2})\\
& + (-1)^{r+1} R_{r1}(\mathrm{h}_{1,r+1}) \blackdiamond D_{r-k,k}^{r1}(\beta)(\mathrm{h}_{1r},h_{r+2})\\
& + (-1)^{r+1} (R_{r1}\blackdiamond D_{r-k,k}^{r1}(\beta))(\mathrm{h}_{1,r+1})\Yleft R_{r+1,1}(\mathrm{h}_{1,r+2})\\
& = (-1)^r (R_{r-k,k}\blackdiamond \beta)(h_1\cdot \mathrm{h}_{2,r+1}) \Yleft R_{r1}(h_1\cdot \mathrm{h}_{2,r+2})\\
& + \sum_{\jmath=1}^r (-1)^{r+\jmath}(R_{r-k,k} \blackdiamond \beta) (\mathrm{h}_{1,\jmath-1},h_{\jmath}+h_{\jmath+1},\mathrm{h}_{\jmath+2,r+1})\Yleft R_{r+1,1}(\mathrm{h}_{1,r+2})\\
& - R_{r1}(\mathrm{h}_{1,r+1}) \blackdiamond \bigl((R_{r-k,k} \blackdiamond \beta)(\mathrm{h}_{1r}) \Yleft R_{r1}(\mathrm{h}_{1r},h_{r+2})\bigr)\\
& - R_{r1}(\mathrm{h}_{1,r+1}) \blackdiamond \bigl((R_{r-k,k} \blackdiamond \beta)(\mathrm{h}_{1r}) \Yleft R_{r1}(\mathrm{h}_{1,r+1})\bigr)\Yleft R_{r+1,1}(\mathrm{h}_{1,r+2}).
\end{align*}
Since, by~\eqref{compatibilidad cdot suma} and~\eqref{compatibilidad suma cdot},

\begin{itemize}

\item[-] $R_{r+1,1}(\mathrm{h}_{1,r+2}) = R_{r1}(\mathrm{h}_{1,r+1})\cdot R_{r1}(\mathrm{h}_{1r},h_{r+2})$,

\item[-] $R_{r-k,k}(h_1\cdot \mathrm{h}_{2,r+1})= R_{r-k+1,k}(\mathrm{h}_{1,r+1})$,

\item[-] $R_{r1}(h_1\cdot \mathrm{h}_{2,r+2})=R_{r+1,1}(\mathrm{h}_{1,r+2})$,

\end{itemize}
we obtain
\begin{align*}
(\partial_{\mathrm{h}}^{r+1,1} + D_{r1}^{r+1,1}) &\xcirc D_{r-k,k}^{r1}(\beta)(\mathrm{h}_{1,r+2}) = (-1)^r R_{r-k+1,k}(\mathrm{h}_{1,r+1}) \blackdiamond \beta(h_1\cdot \mathrm{h}_{2,r+1}) \Yleft R_{r+1,1}(\mathrm{h}_{1,r+2})\\
& + \sum_{\jmath=1}^{r-k} (-1)^{r+\jmath} R_{r-k+1,k}(\mathrm{h}_{1,r+1}) \blackdiamond \beta(\mathrm{h}_{1,\jmath-1},h_{\jmath}+h_{\jmath+1},\mathrm{h}_{\jmath+2,r+1})\Yleft R_{r+1,1}(\mathrm{h}_{1,r+2})\\
& + \sum_{\jmath=r-k+1}^{r} (-1)^{r+\jmath}R_{r-k,k+1}(\mathrm{h}_{1,r+1})\blackdiamond \beta(\mathrm{h}_{1,\jmath-1},h_{\jmath}+h_{\jmath+1},\mathrm{h}_{\jmath+2,r+1})\Yleft R_{r+1,1}(\mathrm{h}_{1,r+2})\\
& - R_{r1}(\mathrm{h}_{1,r+1}) \blackdiamond \bigl((R_{r-k,k} \blackdiamond \beta)(\mathrm{h}_{1r}) \Yleft R_{r1}(\mathrm{h}_{1r},h_{r+2})\bigr)\\
& - R_{r1}(\mathrm{h}_{1,r+1}) \blackdiamond \bigl((R_{r-k,k} \blackdiamond \beta)(\mathrm{h}_{1r}) \Yleft R_{r1}(\mathrm{h}_{1,r+1})\bigr)\Yleft R_{r1}(\mathrm{h}_{1,r+1})\cdot R_{r1}(\mathrm{h}_{1r},h_{r+2}),
\end{align*}
where we additionally use that
$$
R_{r-k,k}(\mathrm{h}_{1,\jmath-1},h_{\jmath}+h_{\jmath+1},\mathrm{h}_{\jmath+2,r+1})=\begin{cases} R_{r-k+1,k}(\mathrm{h}_{1,r+1}), & \text{if $\jmath\le r-k$}\\ R_{r-k,k+1}(\mathrm{h}_{1,r+1}), & \text{if $r-k<\jmath\le  r$}\end{cases}.
$$
Using now~\eqref{condicion equivalente compatibilidad suma cdot con hmassimp2 nuevo}, we obtain that
\begin{align*}
(\partial_{\mathrm{h}}^{r+1,1} + D_{r1}^{r+1,1}) &\xcirc D_{r-k,k}^{r1}(\beta)(\mathrm{h}_{1,r+2}) = (-1)^r R_{r-k+1,k}(\mathrm{h}_{1,r+1}) \blackdiamond \beta(h_1\cdot \mathrm{h}_{2,r+1}) \Yleft R_{r+1,1}(\mathrm{h}_{1,r+2})\\
& + \sum_{\jmath=1}^{r-k} (-1)^{r+\jmath} R_{r-k+1,k}(\mathrm{h}_{1,r+1}) \blackdiamond \beta(\mathrm{h}_{1,\jmath-1},h_{\jmath}+h_{\jmath+1},\mathrm{h}_{\jmath+2,r+1})\Yleft R_{r+1,1}(\mathrm{h}_{1,r+2})\\
& + \sum_{\jmath=r-k+1}^{r} (-1)^{r+\jmath}R_{r-k,k+1}(\mathrm{h}_{1,r+1})\blackdiamond \beta(\mathrm{h}_{1,\jmath-1},h_{\jmath}+h_{\jmath+1},\mathrm{h}_{\jmath+2,r+1})\Yleft R_{r+1,1}(\mathrm{h}_{1,r+2})\\
& - R_{r1}(\mathrm{h}_{1,r+1})\blackdiamond \bigl((R_{r-k,k} \blackdiamond \beta)(\mathrm{h}_{1r})\bigr)\Yleft R_{r+1,1}(\mathrm{h}_{1,r+2}).
\end{align*}
From equality~\eqref{formula que usaremos} it follows that $R_{r-k,k}(\mathrm{h}_{1r})R_{r1}(\mathrm{h}_{1,r+1})=R_{r-k,k+1}(\mathrm{h}_{1,r+1})$. Using this and~\eqref{asociatividad de diamante negro nueva}, we conclude the proof.
\end{proof}

\begin{lemma}\label{lema D partial v} For $\beta\in C_{N}^{r-k,k}(H,I)$, we have
\begin{align*}
D_{r-k,k+1}^{r+1,1}&\xcirc \partial_{\mathrm{v}}^{r-k,k+1}(\beta)(\mathrm{h}_{1,r+2}) %= (-1)^{r+1} (R_{r-k,k+1}\blackdiamond \partial_{\mathrm{v}}^{r-k,k+1}(\beta)) (\mathrm{h}_{1,r+1})\Yleft R_{r+1,1}(\mathrm{h}_{1,r+2})\\
=(-1)^{k+1} R_{r-k,k+1}(\mathrm{h}_{1,r+1})\blackdiamond \beta(\mathrm{h}_{1,r-k},\mathrm{h}_{r-k+2,r+1})\Yleft R_{r+1,1}(\mathrm{h}_{1,r+2})\\
& + \sum_{\jmath=r-k+1}^r (-1)^{r+\jmath+1} R_{r-k,k+1}(\mathrm{h}_{1,r+1})\blackdiamond\beta(\mathrm{h}_{1,\jmath-1}, h_{\jmath}+h_{\jmath+1},\mathrm{h}_{\jmath+2,r+1})\Yleft R_{r+1,1}(\mathrm{h}_{1,r+2})\\
&  + R_{r-k,k+1}(\mathrm{h}_{1,r+1})\blackdiamond\beta(\mathrm{h}_{1r})\Yleft R_{r+1,1}(\mathrm{h}_{1,r+2}).
\end{align*}
\end{lemma}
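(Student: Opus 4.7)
The proof is a direct computation that unfolds two definitions and then distributes. First, by the definition of $D_{r-k,k+1}^{r+1,1}$,
\begin{equation*}
D_{r-k,k+1}^{r+1,1}\xcirc \partial_{\mathrm{v}}^{r-k,k+1}(\beta)(\mathrm{h}_{1,r+2}) = (-1)^{r+1}\, R_{r-k,k+1}(\mathrm{h}_{1,r+1}) \blackdiamond \partial_{\mathrm{v}}^{r-k,k+1}(\beta)(\mathrm{h}_{1,r+1}) \Yleft R_{r+1,1}(\mathrm{h}_{1,r+2}).
\end{equation*}
Next I would apply the definition of $\partial_{\mathrm{v}}^{r-k,k+1}$, which expresses $\partial_{\mathrm{v}}^{r-k,k+1}(\beta)(\mathrm{h}_{1,r+1})$ as a signed sum of $k+2$ terms: the leading term with sign $(-1)^{r-k}$, namely $\beta(\mathrm{h}_{1,r-k},\mathrm{h}_{r-k+2,r+1})$; the middle contraction terms $\sum_{\jmath=r-k+1}^{r}(-1)^{\jmath}\beta(\mathrm{h}_{1,\jmath-1}, h_{\jmath}+h_{\jmath+1}, \mathrm{h}_{\jmath+2,r+1})$; and the tail term $(-1)^{r+1}\beta(\mathrm{h}_{1r})$.

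To push this sum through the outer $R_{r-k,k+1}(\mathrm{h}_{1,r+1})\blackdiamond(-)\Yleft R_{r+1,1}(\mathrm{h}_{1,r+2})$ and obtain a termwise expansion, I would invoke two linearity properties. Additivity of $\blackdiamond$ in the second slot, $h\blackdiamond(y+y')=h\blackdiamond y+h\blackdiamond y'$, is item~(1) of Proposition~\ref{primeras propiedades} (equivalently the second identity in~\eqref{eqq 2}). Additivity of $\Yleft$ in the first slot, $(y+y')\Yleft h=y\Yleft h+y'\Yleft h$, is in general only a twisted identity~\eqref{condicion equivalente compatibilidad suma cdot con hmassimp1 nuevo}, but it reduces to genuine additivity here because $I$ is trivial throughout this section — this is exactly the content of Remark~\ref{caso I trivial}. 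With both distributions in place, the three groups of terms line up with the three summands in the target expression.

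The remaining task is pure sign bookkeeping: the overall factor $(-1)^{r+1}$ coming from $D_{r-k,k+1}^{r+1,1}$ multiplies the local signs $(-1)^{r-k}$, $(-1)^{\jmath}$ and $(-1)^{r+1}$ to produce $(-1)^{2r-k+1}=(-1)^{k+1}$, $(-1)^{r+\jmath+1}$ and $(-1)^{2r+2}=+1$ respectively, matching the signs in the claimed formula. There is no serious obstacle: unlike Lemma~\ref{lema partial h D}, no appeal to~\eqref{compatibilidad cdot suma}, \eqref{compatibilidad suma cdot}, \eqref{formula que usaremos}, or the associativity relation~\eqref{condicion equivalente compatibilidad suma cdot con hmassimp2 nuevo} between $\blackdiamond$ and $\Yleft$ is needed — only the two linearities above and the triviality of $I$ enter.
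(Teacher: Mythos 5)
Your proof is correct and coincides with the paper's approach: the paper disposes of this lemma with the single line ``this follows by a direct computation,'' and your unfolding of the definitions of $D_{r-k,k+1}^{r+1,1}$ and $\partial_{\mathrm{v}}^{r-k,k+1}$, followed by distribution via additivity of $\blackdiamond$ and the trivial-$I$ additivity of $\Yleft$ (Remark~\ref{caso I trivial}) and the sign check $(-1)^{r+1}(-1)^{r-k}=(-1)^{k+1}$, $(-1)^{r+1}(-1)^{\jmath}=(-1)^{r+\jmath+1}$, $(-1)^{r+1}(-1)^{r+1}=1$, is exactly that computation. Your observation that, unlike Lemma~\ref{lema partial h D}, no identity involving the maps $R$ is needed (since $\partial_{\mathrm{v}}$ does not alter the outer factors $R_{r-k,k+1}(\mathrm{h}_{1,r+1})$ and $R_{r+1,1}(\mathrm{h}_{1,r+2})$) is also accurate.
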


\begin{proof} This follows by a direct computation.
\end{proof}

\begin{lemma}\label{lema D partial h} For $\beta\in \widehat{C}_N^{r-k,k}(H,I)$, we have
\begin{align*}
D_{r-k+1,k}^{r+1,1}&\xcirc \partial_{\mathrm{h}}^{r-k+1,k}(\beta)(\mathrm{h}_{1,r+2}) = (-1)^{r+1} (R_{r-k,k+1}\blackdiamond \partial_{\mathrm{h}}^{r-k,k+1}(\beta))(\mathrm{h}_{1,r+1})\Yleft R_{r+1,1}(\mathrm{h}_{1,r+2})\\
& =(-1)^{r+1} R_{r-k+1,k}(\mathrm{h}_{1,r+1})\blackdiamond \beta (h_1\cdot \mathrm{h}_{2,r+1})\Yleft R_{r+1,1}(\mathrm{h}_{1,r+2})\\
& +  \sum_{\jmath=1}^{r-k} (-1)^{r+\jmath+1} R_{r-k+1,k}(\mathrm{h}_{1,r+1})\blackdiamond \beta (\mathrm{h}_{1,\jmath-1}, h_{\jmath}+h_{\jmath+1},\mathrm{h}_{\jmath+2,r+1})\Yleft R_{r+1,1}(\mathrm{h}_{1,r+2})\\
&  + (-1)^k R_{r-k,k+1}(\mathrm{h}_{1,r+2})\blackdiamond \beta(\mathrm{h}_{1,r-k},\mathrm{h}_{r-k+2,r+1})\Yleft R_{r+1,1}(\mathrm{h}_{1,r+2}).
\end{align*}
\end{lemma}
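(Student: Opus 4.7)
The plan is to compute $D_{r-k+1,k}^{r+1,1}\xcirc \partial_{\mathrm{h}}^{r-k+1,k}(\beta)$ by unfolding the two operators in sequence, and then identify the three resulting groups of terms with the three lines on the right-hand side of the stated identity. The heart of the computation is a single algebraic manipulation combining the operation $\blackdiamond$ with the brace multiplication via identity~\eqref{formula que usaremos}; everything else is bookkeeping.

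First I would apply the definition of $D_{r-k+1,k}^{r+1,1}$, which immediately gives
\[
D_{r-k+1,k}^{r+1,1}\xcirc \partial_{\mathrm{h}}^{r-k+1,k}(\beta)(\mathrm{h}_{1,r+2}) = (-1)^{r+1}\bigl(R_{r-k+1,k}\blackdiamond \partial_{\mathrm{h}}^{r-k+1,k}(\beta)\bigr)(\mathrm{h}_{1,r+1})\Yleft R_{r+1,1}(\mathrm{h}_{1,r+2}),
\]
which corresponds to the first equality in the statement (modulo the obvious typo in the indices of $R$ and $\partial_{\mathrm{h}}$). Then I would expand $\partial_{\mathrm{h}}^{r-k+1,k}(\beta)(\mathrm{h}_{1,r+1})$ by its definition into three pieces: a $d^0$-piece of the form $\beta(h_1\cdot \mathrm{h}_{2,r+1})$, a middle sum $\sum_{\jmath=1}^{r-k}(-1)^{\jmath}\beta(\mathrm{h}_{1,\jmath-1},h_{\jmath}+h_{\jmath+1},\mathrm{h}_{\jmath+2,r+1})$, and a tail piece $(-1)^{r-k+1}R_{r-k,1}(\mathrm{h}_{1,r-k+1})\blackdiamond \beta(\mathrm{h}_{1,r-k},[\mathrm{h}_{r-k+2,r+1}])$.

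The first two pieces, once multiplied by $(-1)^{r+1}R_{r-k+1,k}(\mathrm{h}_{1,r+1})\blackdiamond(\cdot)\Yleft R_{r+1,1}(\mathrm{h}_{1,r+2})$, reproduce verbatim the first and second lines of the stated formula. The only nontrivial step is the third piece: its coefficient is $(-1)^{r+1}(-1)^{r-k+1}=(-1)^{k}$, which matches, and the $\blackdiamond$-action becomes
\[
R_{r-k+1,k}(\mathrm{h}_{1,r+1})\blackdiamond \bigl(R_{r-k,1}(\mathrm{h}_{1,r-k+1})\blackdiamond \beta(\mathrm{h}_{1,r-k},[\mathrm{h}_{r-k+2,r+1}])\bigr).
\]
By identity~\eqref{asociatividad de diamante negro nueva}, this equals $R_{r-k,1}(\mathrm{h}_{1,r-k+1})R_{r-k+1,k}(\mathrm{h}_{1,r+1})\blackdiamond \beta(\mathrm{h}_{1,r-k},[\mathrm{h}_{r-k+2,r+1}])$, where the juxtaposition is the brace multiplication.

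The key observation is then that, by formula~\eqref{formula que usaremos} applied with $a=h_1+\cdots+h_{r-k}$, $b=h_{r-k+1}$ and $c=h_{r-k+2}+\cdots+h_{r+1}$, one has
\[
R_{r-k,1}(\mathrm{h}_{1,r-k+1})\,R_{r-k+1,k}(\mathrm{h}_{1,r+1}) = (a\cdot b)\bigl((a+b)\cdot c\bigr) = a\cdot(b+c) = R_{r-k,k+1}(\mathrm{h}_{1,r+1}).
\]
Substituting this back gives exactly the third line $(-1)^k R_{r-k,k+1}(\mathrm{h}_{1,r+1})\blackdiamond \beta(\mathrm{h}_{1,r-k},[\mathrm{h}_{r-k+2,r+1}])\Yleft R_{r+1,1}(\mathrm{h}_{1,r+2})$ of the claimed identity, and the proof is complete. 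The only place where any real content enters is this combination of~\eqref{asociatividad de diamante negro nueva} with~\eqref{formula que usaremos}; the rest is pure bookkeeping of signs and indices, entirely analogous to the verifications carried out in Lemma~\ref{lema partial h D}.
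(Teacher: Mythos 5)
Your proposal is correct and follows essentially the same route as the paper's own proof: unfold $D_{r-k+1,k}^{r+1,1}$ and $\partial_{\mathrm{h}}^{r-k+1,k}$ by definition, match the first two groups of terms directly, and convert the iterated action in the last term via~\eqref{asociatividad de diamante negro nueva} together with the identity $R_{r-k,1}(\mathrm{h}_{1,r-k+1})\,R_{r-k+1,k}(\mathrm{h}_{1,r+1})=R_{r-k,k+1}(\mathrm{h}_{1,r+1})$, which is exactly the consequence of~\eqref{formula que usaremos} invoked in the paper. You also correctly identify the index slips in the displayed statement ($R_{r-k+1,k}$, $\partial_{\mathrm{h}}^{r-k+1,k}$ in the intermediate line, and $\mathrm{h}_{1,r+1}$ in the last line) as typos rather than mathematical issues.
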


\begin{proof} By definition
\begin{align*}
D_{r-k+1,k}^{r+1,1}&\xcirc \partial_{\mathrm{h}}^{r-k+1,k}(\beta)(\mathrm{h}_{1,r+2}) = (-1)^{r+1} (R_{r-k,k+1}\blackdiamond \partial_{\mathrm{h}}^{r-k,k+1}(\beta)) (\mathrm{h}_{1,r+1})\Yleft R_{r+1,1}(\mathrm{h}_{1,r+2})\\
& =(-1)^{r+1} R_{r-k+1,k}(\mathrm{h}_{1,r+1})\blackdiamond \beta (h_1\cdot \mathrm{h}_{2,r+1})\Yleft R_{r+1,1}(\mathrm{h}_{1,r+2})\\
& + \sum_{\jmath=1}^{r-k} (-1)^{r+\jmath+1} R_{r-k+1,k}(\mathrm{h}_{1,r+1})\blackdiamond \beta (\mathrm{h}_{1,\jmath-1}, h_{\jmath}+h_{\jmath+1},\mathrm{h}_{\jmath+2,r+1})\Yleft R_{r+1,1}(\mathrm{h}_{1,r+2})\\
&  + (-1)^k R_{r-k+1,k}(\mathrm{h}_{1,r+1})\blackdiamond \bigl(R_{r-k,1}(\mathrm{h}_{1,r-k+1})\blackdiamond \beta(\mathrm{h}_{1,r-k},\mathrm{h}_{r-k+2,r+1})\bigr)\Yleft R_{r+1,1}(\mathrm{h}_{1,r+2}).
\end{align*}
From equality~\eqref{formula que usaremos} it follows that $R_{r-k,1}(\mathrm{h}_{1,r-k+1}) R_{r-k+1,k}(\mathrm{h}_{1,r+1}) = R_{r-k,k+1}(\mathrm{h}_{1,r+1})$. Using this and~\eqref{asociatividad de diamante negro nueva}, we conclude the proof.
\end{proof}

\begin{theorem}\label{complejo caso general}
$(\wh{C}_N^*(H,I),\partial+D)$ is a cochain complex.
\end{theorem}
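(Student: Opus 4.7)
The plan is to expand $(\partial+D)^2 = (\partial_h+\partial_v+D)^2$ into the nine compositions displayed in~\eqref{nueve terminos} and show that they sum to zero on each bigraded summand $\wh{C}_N^{rs}(H,I)$. By Theorem~\ref{principal} we already have $\partial_h\partial_h = \partial_v\partial_v = 0$ and $\partial_h\partial_v + \partial_v\partial_h = 0$, so it remains to establish the identity
\[
\partial_h D + D\partial_h + \partial_v D + D\partial_v + D\xcirc D = 0.
\]

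The key observation is that the image of $D$ is concentrated in the bottom row $\bigoplus_r \wh{C}_N^{r,1}(H,I)$. Consequently $\partial_h D$, $D\partial_h$, $D\partial_v$ and $D\xcirc D$ all take values in the bottom row, while $\partial_v D$ is the only term that lands in bidegree $(r,2)$. Thus the identity splits into two independent claims according to the target bidegree. The first, $\partial_v D = 0$, is precisely Lemma~\ref{lema partial v D}. The second, $\partial_h D + D\partial_h + D\partial_v + D\xcirc D = 0$ in bidegree $(*,1)$, is what I would verify by expanding each composition on a fixed summand $\wh{C}_N^{r-k,k}(H,I)$ and carrying out a term-by-term cancellation.

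Concretely, for $\beta\in\wh{C}_N^{r-k,k}(H,I)$ I would collect the right-hand sides of Lemmas~\ref{lema partial h D},~\ref{lema D partial v} and~\ref{lema D partial h}, noting that Lemma~\ref{lema partial h D} simultaneously encodes $\partial_h D$ and $D\xcirc D$, Lemma~\ref{lema D partial v} encodes $D\partial_v$, and Lemma~\ref{lema D partial h} encodes $D\partial_h$. All ten resulting summands are of the shape $R\blackdiamond\beta(\text{--})\Yleft R_{r+1,1}(\mathrm{h}_{1,r+2})$, and the cancellations pair off in the obvious way: the $\beta(h_1\cdot \mathrm{h}_{2,r+1})$ terms from Lemma~\ref{lema partial h D} and Lemma~\ref{lema D partial h} carry opposite signs $(-1)^r$ and $(-1)^{r+1}$; the partial sums for $1\le \jmath\le r-k$ in these same two lemmas cancel for the same reason; the partial sums for $r-k+1\le \jmath\le r$ in Lemmas~\ref{lema partial h D} and~\ref{lema D partial v} cancel; the two $R_{r-k,k+1}\blackdiamond\beta(\mathrm{h}_{1r})\Yleft R_{r+1,1}$ terms (one from each of those two lemmas) cancel; and finally the two $R_{r-k,k+1}\blackdiamond\beta(\mathrm{h}_{1,r-k},\mathrm{h}_{r-k+2,r+1})\Yleft R_{r+1,1}$ terms from Lemmas~\ref{lema D partial v} and~\ref{lema D partial h} carry opposite signs $(-1)^{k+1}$ and $(-1)^k$ and cancel as well.

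The hard work is entirely absorbed by the four preparatory lemmas, whose proofs already exploit the structural identities~\eqref{compatibilidad cdot suma},~\eqref{compatibilidad suma cdot},~\eqref{asociatividad de diamante negro nueva},~\eqref{segunda equivalencia de (2.1)2 cociclos en el centro},~\eqref{condicion equivalente compatibilidad suma cdot con hmassimp2 nuevo} and~\eqref{formula que usaremos} to massage each expression into a common normal form. Once those normal forms are in place, the main step of the theorem reduces to the purely combinatorial matching of signs described above; the theorem is then immediate from Theorem~\ref{principal} together with Lemmas~\ref{lema partial v D},~\ref{lema partial h D},~\ref{lema D partial v} and~\ref{lema D partial h}.
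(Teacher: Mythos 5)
Your proposal is correct and follows essentially the same route as the paper: reduce via Theorem~\ref{principal} and Lemma~\ref{lema partial v D}, then show $(\partial_{\mathrm{h}}+D)\xcirc D+D\xcirc\partial_{\mathrm{h}}+D\xcirc\partial_{\mathrm{v}}=0$ by combining Lemmas~\ref{lema partial h D}, \ref{lema D partial v} and~\ref{lema D partial h}. The only difference is that you spell out the ten-term sign cancellation that the paper leaves as ``follows immediately,'' and your pairing of the terms is the right one.
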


\begin{proof} We must prove that $(\partial+D)^2=0$ or, equivalently, the sum of the nine summands on the right side    of~\eqref{nueve terminos} vanishes. By Theorem~\ref{principal} we know that $\partial_{\mathrm{h}}\xcirc \partial_{\mathrm{h}}+\partial_{\mathrm{v}}\xcirc \partial_{\mathrm{h}}+\partial_{\mathrm{h}}\xcirc \partial_{\mathrm{v}}+\partial_{\mathrm{v}}\xcirc \partial_{\mathrm{v}}=0$; while, by Lemma~\ref{lema partial v D} we know that $\partial_{\mathrm{v}}\xcirc D=0$. So it remains to prove that
\begin{equation*}\label{suma de cuatro}
(\partial_{\mathrm{h}} +D)\xcirc D+ D\xcirc \partial_{\mathrm{\mathrm{h}}}+D\xcirc\partial_{\mathrm{v}}=0.
\end{equation*}
But this follows immediately from Lemmas~\ref{lema partial h D}, \ref{lema D partial v} and~\ref{lema D partial h}.
\end{proof}

\begin{notation}
We let $\ho_{\blackdiamond,\Yleft}^*(H,I)$ denote the cohomology groups of  $(\wh{C}_N^*(H,I),\partial+D)$.
\end{notation}

\begin{proposition}\label{significado de cociclos caso general} A pair $(\beta,f)\in \wh{C}_N^2(H,I)=\wh{C}_N^{02}(H,I)\oplus \wh{C}_N^{11}(H,I)$ is a cocycle of $(\wh{C}_N^*(H,I),\partial+D)$ if and only if conditions~\eqref{condicion de cociclo},~\eqref{normalidad y cociclo abeliano},~\eqref{segunda equivalencia de (2.1)1} and~\eqref{condicion equivalente compatibilidad suma cdot con hmassimp3 nuevo} are satisfied.
\end{proposition}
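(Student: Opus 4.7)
The plan is to unwind the cocycle condition $(\partial+D)(\beta,f)=0$ coordinate by coordinate in the decomposition
\[
\wh{C}_N^3(H,I)=\wh{C}_N^{03}(H,I)\oplus \wh{C}_N^{12}(H,I)\oplus \wh{C}_N^{21}(H,I),
\]
and match each coordinate to one of the claimed identities, exactly as in Proposition~\ref{significado de cociclos}, only with the extra diagonal contributions tracked in the top horizontal degree.

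On $\wh{C}_N^{03}(H,I)$ the only contribution is $\partial_{\mathrm{v}}^{03}(\beta)$, since neither $\partial_{\mathrm{h}}$ nor $D$ lands there. Expanding the four-term alternating sum defining $\partial_{\mathrm{v}}^{03}(\beta)(h_1,h_2,h_3)$ and setting it to zero yields precisely the $2$-cocycle identity~\eqref{condicion de cociclo}. The normalizations~\eqref{normalidad y cociclo abeliano} are built into the very definition of $\wh{C}_N^{02}(H,I)$: the vanishing $\beta(h,0)=\beta(0,h)=0$ comes from working over $\ov H=H\setminus\{0\}$ and extending by zero on tuples containing~$0$, while the symmetry $\beta(h,h')=\beta(h',h)$ comes from the $(1,1)$-shuffle relation $[h,h']-[h',h]=0$ already imposed in $\ov M(2)$.

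On $\wh{C}_N^{12}(H,I)$ the only contribution is $\partial_{\mathrm{h}}^{12}(\beta)+\partial_{\mathrm{v}}^{12}(f)$, since $D$ has image only in top horizontal grading $s=1$. Using $R_{01}(h)=0\cdot h=h$, an immediate substitution in the definitions of $\partial_{\mathrm{h}}$ and $\partial_{\mathrm{v}}$ gives
\[
\beta(h_1\cdot h_2,h_1\cdot h_3)-h_1\blackdiamond\beta(h_2,h_3)-f(h_1,h_3)+f(h_1,h_2+h_3)-f(h_1,h_2)=0,
\]
which after rearrangement is exactly identity~\eqref{segunda equivalencia de (2.1)1}.

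On $\wh{C}_N^{21}(H,I)$ the contribution is $\partial_{\mathrm{h}}^{21}(f)+D_{02}^{21}(\beta)+D_{11}^{21}(f)$; this is the only coordinate where the new diagonal operator contributes. Plugging in $R_{02}(h,h')=h+h'$, $R_{11}(h,h')=h\cdot h'$ and $R_{21}(h,h',h'')=(h+h')\cdot h''$, the three pieces expand to three $f$-terms (from $\partial_{\mathrm{h}}^{21}(f)$) together with a $\beta$-term $(h+h')\blackdiamond \beta(h,h')\Yleft(h+h')\cdot h''$ from $D_{02}^{21}(\beta)$ and an $f$-term $(h\cdot h')\blackdiamond f(h,h')\Yleft(h+h')\cdot h''$ from $D_{11}^{21}(f)$. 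Using identity~\eqref{compatibilidad suma cdot} to rewrite $(h+h')\cdot h''$ as $(h\cdot h')\cdot(h\cdot h'')$ in the argument of the $D_{11}^{21}(f)$ summand, the resulting equation becomes precisely~\eqref{condicion equivalente compatibilidad suma cdot con hmassimp3 nuevo} after trivial rearrangement; the parenthetical hint in the paper, ``take into account~\eqref{condicion equivalente compatibilidad suma cdot con hmassimp3 caso simple}'', is the degenerate sub-case $\Yleft=0$ in which both diagonal contributions vanish. There is no genuine obstacle: the whole argument is symbolic substitution and reindexing; the only bookkeeping point worth verifying is that exactly the $\beta$-term of~\eqref{condicion equivalente compatibilidad suma cdot con hmassimp3 nuevo} comes from $D_{02}^{21}$ and exactly the non-$\partial_{\mathrm{h}}$ $f$-term comes from $D_{11}^{21}$, so that the computation separates cleanly into the old Proposition~\ref{significado de cociclos} plus the new contribution of $D$.
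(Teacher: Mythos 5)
Your overall plan is the same ``direct computation'' the paper leaves to the reader, and most of the bookkeeping is right: the decomposition of the degree-$3$ target into bidegrees $(0,3)$, $(1,2)$, $(2,1)$; the observation that only $\partial_{\mathrm{v}}^{03}(\beta)$, respectively $\partial_{\mathrm{h}}^{12}(\beta)+\partial_{\mathrm{v}}^{12}(f)$, respectively $\partial_{\mathrm{h}}^{21}(f)+D_{02}^{21}(\beta)+D_{11}^{21}(f)$, can contribute; the identification of the $(0,3)$-component with~\eqref{condicion de cociclo}; and the explanation of~\eqref{normalidad y cociclo abeliano} as built into the normalized (shuffle) complex together with extension by zero. (One small omission there: the cocycle equations only give the identities for arguments in $\ov H$, so you should also note that the cases with a zero argument hold automatically under the extension-by-zero convention, using $0\cdot h=h$, $0\blackdiamond y=y$, $y\Yleft 0=0$.)

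The genuine problem is in the matching step for the two remaining components: with the differentials exactly as defined in the paper, your own expansion of the $(1,2)$-component rearranges to
\[
\beta(h\cdot h',h\cdot h'')-h\blackdiamond\beta(h',h'')=f(h,h')+f(h,h'')-f(h,h'+h''),
\]
whereas~\eqref{segunda equivalencia de (2.1)1} asserts that the same left-hand side equals $f(h,h'+h'')-f(h,h')-f(h,h'')$; these are not the same identity, so ``after rearrangement is exactly~\eqref{segunda equivalencia de (2.1)1}'' fails. The same mismatch occurs in bidegree $(2,1)$: expanding $\partial_{\mathrm{h}}^{21}(f)+D_{02}^{21}(\beta)+D_{11}^{21}(f)=0$ gives
\[
f(h\cdot h',h\cdot h'')-f(h+h',h'')+(h\cdot h')\blackdiamond f(h,h'')+(h+h')\blackdiamond\beta(h,h')\Yleft (h+h')\cdot h''+(h\cdot h')\blackdiamond f(h,h')\Yleft (h+h')\cdot h''=0,
\]
while~\eqref{condicion equivalente compatibilidad suma cdot con hmassimp3 nuevo}, after using $(h+h')\cdot h''=(h\cdot h')\cdot(h\cdot h'')$ and moving everything to one side, is the same expression with every $f$-term negated (equivalently, with the single $\beta$-term negated). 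In other words, what your computation literally proves is that $(\partial+D)(\beta,f)=0$ is equivalent to the four conditions holding for the pair $(\beta,-f)$, and since those conditions are not invariant under $f\mapsto -f$, the claimed ``exact'' identifications do not hold as written. The discrepancy is uniform and therefore repairable --- either fix the sign convention in the differentials (the paper itself is loose here: the proof of Corollary~\ref{equivalencia} uses $\partial_{\mathrm{v}}^{02}(\varphi)(h_1,h_2)=-\varphi(h_2)+\varphi(h_1+h_2)-\varphi(h_1)$, the negative of the printed definition), or state the dictionary explicitly as $(\beta,f)\leftrightarrow(\beta,-f)$ and check it is compatible with the coboundary description used later for $\Ext_{\blackdiamond,\Yleft}(H,I)$ --- but as it stands this sign bookkeeping is a real gap in your argument, not a trivial rearrangement.
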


\begin{proof} This follows by a direct computation.
\end{proof}

\begin{corollary}\label{equiv caso general} When $I$ is trivial, there is a bijective correspondence
$$
\Ext_{\blackdiamond,\Yleft}(H,I) \longleftrightarrow \ho_{\blackdiamond,\Yleft}^2(H,I).
$$
\end{corollary}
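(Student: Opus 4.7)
The plan is to parallel the proof of Corollary~\ref{equivalencia}, adding the contribution from the diagonal operator~$D$. By Corollary~\ref{ppal seccion 4}, under the present hypotheses the abelian group $I\times_{\beta,f}^{\blackdiamond,\Yleft} H$ is a linear cycle set and~\eqref{extension} is an extension of linear cycle sets precisely when $(\beta,f)$ satisfies a list of conditions. Since $I$ is trivial, the first equality in~\eqref{equivalente a [3.6]} is automatic and~\eqref{condicion equivalente compatibilidad suma cdot con hmassimp1 nuevo} simplifies as in Remark~\ref{caso I trivial}; moreover, by Remark~\ref{otra forma} the standing assumptions~\eqref{eqq 1}--\eqref{eqq 3} already encode items~(1)--(4) of Proposition~\ref{primeras propiedades} together with~\eqref{segunda equivalencia de (2.1)2 cociclos en el centro},~\eqref{equivalente a [3.6]},~\eqref{condicion equivalente compatibilidad suma cdot con hmassimp1 nuevo},~\eqref{condicion equivalente compatibilidad suma cdot con hmassimp2 nuevo} and the bijectivity of $y\mapsto h\blackdiamond y$. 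The residual conditions on $(\beta,f)$ are therefore exactly~\eqref{condicion de cociclo},~\eqref{normalidad y cociclo abeliano},~\eqref{segunda equivalencia de (2.1)1} and~\eqref{condicion equivalente compatibilidad suma cdot con hmassimp3 nuevo}, which by Proposition~\ref{significado de cociclos caso general} characterize the $2$-cocycles of $\bigl(\wh{C}_N^*(H,I),\partial+D\bigr)$. Invoking Remark~\ref{toda extension es equivalente a una...}, the rule $(\beta,f)\mapsto \bigl[(\iota,I\times_{\beta,f}^{\blackdiamond,\Yleft} H,\pi)\bigr]$ is a well-defined surjection from the group of $2$-cocycles of $\bigl(\wh{C}_N^*(H,I),\partial+D\bigr)$ onto $\Ext_{\blackdiamond,\Yleft}(H,I)$.

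It remains to identify two cocycles that produce equivalent extensions with $(\partial+D)$-cohomologous pairs. For $\varphi\in\wh{C}_N^1(H,I)=\wh{C}_N^{01}(H,I)$, the formulas
\[
\partial_{\mathrm{v}}^{02}(\varphi)(h_1,h_2)=-\varphi(h_2)+\varphi(h_1+h_2)-\varphi(h_1),\qquad \partial_{\mathrm{h}}^{11}(\varphi)(h_1,h_2)=\varphi(h_1\cdot h_2)-h_1\blackdiamond\varphi(h_2)
\]
computed at the end of the proof of Corollary~\ref{equivalencia} still apply, while a direct evaluation of $D_{01}^{11}$, using $R_{01}(h_1)=0\cdot h_1=h_1$ and $R_{11}(h_1,h_2)=h_1\cdot h_2$, yields
\[
D_{01}^{11}(\varphi)(h_1,h_2)=(-1)^{0+1}(R_{01}\blackdiamond\varphi)(h_1)\Yleft R_{11}(h_1,h_2)=-h_1\blackdiamond\varphi(h_1)\Yleft h_1\cdot h_2.
\]
Consequently the coboundary condition $(\beta-\beta',f-f')=(\partial+D)(\varphi)$ amounts to the simultaneous validity of~\eqref{equivalencia a nivel de qrupos} and $\varphi(h\cdot h')=h\blackdiamond\varphi(h')+h\blackdiamond\varphi(h)\Yleft h\cdot h'$, the latter being exactly condition~\eqref{tercera condicion caso I trivial}. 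By Remark~\ref{equivalecia entre extensiones producto, caso I trivial} these are precisely the conditions under which the extensions $(\iota,I\times_{\beta,f}^{\blackdiamond,\Yleft}H,\pi)$ and $(\iota,I\times_{\beta',f'}^{\blackdiamond,\Yleft}H,\pi)$ are equivalent, so the surjection above descends to the announced bijection.

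The only delicate point is the sign and evaluation entering $D_{01}^{11}(\varphi)$: the factor $(-1)^{r+s}\vert_{r=0,\,s=1}=-1$, combined with $R_{01}(h_1)=h_1$, yields exactly the $\Yleft$-correction $-h_1\blackdiamond\varphi(h_1)\Yleft h_1\cdot h_2$ needed to upgrade the $\Yleft=0$ equivalence relation of Corollary~\ref{equivalencia} to~\eqref{tercera condicion caso I trivial}. Once this is verified, setting $\Yleft=0$ makes $D$ vanish and the present argument collapses to that of Corollary~\ref{equivalencia}, as it should.
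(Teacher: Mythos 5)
Your proposal is correct and is essentially the paper's own argument, whose proof of this corollary is literally ``mimic the proof of Corollary~\ref{equivalencia}'': you identify the residual conditions on $(\beta,f)$ with the $2$-cocycle condition for $(\wh{C}_N^*(H,I),\partial+D)$ via Proposition~\ref{significado de cociclos caso general} (the centrality hypotheses being automatic since $I$ is trivial), and you match coboundaries with equivalences of extensions via Remark~\ref{equivalecia entre extensiones producto, caso I trivial}, the only genuinely new ingredient being the explicit, correctly signed evaluation $D_{01}^{11}(\varphi)(h_1,h_2)=-h_1\blackdiamond\varphi(h_1)\Yleft h_1\cdot h_2$, which collapses to the Section~5 argument when $\Yleft=0$. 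The one imprecision is your claim that the coboundary relation ``amounts to'' \eqref{equivalencia a nivel de qrupos} and \eqref{tercera condicion caso I trivial}: strictly, its $(1,1)$-component reads $f-f'=\partial_{\mathrm{h}}^{11}(\varphi)+D_{01}^{11}(\varphi)$ (so \eqref{tercera condicion caso I trivial} as written corresponds to $f=f'$), but this conflation is inherited from the way the paper itself states Proposition~\ref{equivalecia entre extensiones producto} and uses it in Corollary~\ref{equivalencia}, so your write-up is at the same level of precision as the original.
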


\begin{proof}
Mimic the proof of Corollary~\ref{equivalencia}.
\end{proof}

\begin{figure}[h]
\begin{center}
\adjustbox{scale=1.4,center}{%
\begin{tikzcd}[row sep=2em, column sep=2 * 1.61803398874988em, scale=2]
\vdots \arrow[rrrrddd, green(ncs), shift left=0.2ex, end anchor={[yshift=0.4ex]}, end anchor={[xshift=-0.1ex]}, start anchor={[xshift=0.5ex]}, start anchor={[yshift=0.3ex]}, outer sep=-1.6pt, "D" pos=0.07] & \vdots \arrow[rrrrddd, green(ncs), shift left=0.2ex, end anchor={[yshift=1.4ex]north west}, end anchor={[xshift=-1.6ex]}, start anchor={[xshift=0.5ex]}, start anchor={[yshift=0.3ex]}, outer sep=-1.6pt, "D" pos=0.07] & \vdots \arrow[rrrdd, dash, green(ncs), shift left=0.3ex, end anchor={[yshift=-1.25ex]north west}, start anchor={[xshift=0.3ex]}, start anchor={[yshift=0.05ex]}, outer sep=-1.6pt, "D" pos=0.105] & \vdots \arrow[rrd, dash, green(ncs), shift left=0.2ex, end anchor={[yshift=-3.2ex]north west}, start anchor={[xshift=0.5ex]}, start anchor={[yshift=0.05ex]}, outer sep=-1.6pt, "D" pos=0.18]  & \vdots \arrow[rd, dash, green(ncs), shift left= 0.4ex, end anchor={[yshift=2.4ex]north west}, outer sep=-1.6pt, "D" pos=0.35] & \\
\wh{C}_N^{03} \arrow[r] \arrow[u] \arrow[rrrdd, green(ncs), shift left=0ex, end anchor={[yshift=0.6ex]}, start anchor={[xshift=-0.3ex]}, start anchor={[yshift=0.3ex]}, outer sep=-1.6pt, "D" pos=0.08] & \wh{C}_N^{13} \arrow[r, crossing over] \arrow[u, crossing over] \arrow[rrrdd, green(ncs), shift left=0ex, end anchor={[yshift=0.6ex]}, start anchor={[xshift=-0.3ex]}, start anchor={[yshift=0.3ex]}, outer sep=-1.6pt, "D" pos=0.08]& \wh{C}_N^{23} \arrow[r, crossing over] \arrow[u, crossing over] \arrow[rrrdd, green(ncs), shift left=0ex, end anchor={[yshift=1.45ex]north west}, start anchor={[xshift=-0.3ex]}, start anchor={[yshift=0.3ex]}, outer sep=-1.6pt, "D" pos=0.08] & \wh{C}_N^{33} \arrow[r, crossing over] \arrow[u, crossing over] \arrow[rrd, dash, green(ncs), shift left=0ex, end anchor={[yshift=-1.25ex]north west}, start anchor={[xshift=-0.3ex]}, start anchor={[yshift=0.05ex]}, outer sep=-1.8pt, "D" pos=0.14] & \wh{C}_N^{43} \arrow[r, crossing over] \arrow[u, crossing over] \arrow[r, dash, green(ncs), shift left=-0.5ex, end anchor={[yshift=-3.4ex]north west}, outer sep=-1.8pt, "D" pos=0.35] & \dots \\
\wh{C}_N^{02} \arrow[r] \arrow[u] \arrow[rrd, green(ncs), shift left=0ex, end anchor={[yshift=0.5ex]}, outer sep=-1.6pt, "D" pos=0.11] & \wh{C}_N^{12} \arrow[r, crossing over] \arrow[u, crossing over]  \arrow[rrd, green(ncs), shift left=0ex, end anchor={[yshift=0.5ex]}, outer sep=-1.6pt, "D" pos=0.12]& \wh{C}_N^{22} \arrow[r, crossing over] \arrow[u, crossing over]  \arrow[rrd, green(ncs), shift left=0ex, end anchor={[yshift=0.5ex]}, outer sep=-1.6pt, "D" pos=0.12]& \wh{C}_N^{32} \arrow[r, crossing over] \arrow[u, crossing over] \arrow[rrd, green(ncs), shift left=0ex, end anchor={[yshift=1.1ex]north west}, outer sep=-1.6pt, "D" pos=0.12] & \wh{C}_N^{42} \arrow[r, crossing over] \arrow[u, crossing over] \arrow[r, dash, green(ncs), shift left=-0.5ex, end anchor={[yshift=-2.5ex]north west}, outer sep=-1.6pt, "D" pos=0.36] & \dots \\
\wh{C}_N^{01} \arrow[r] \arrow[u] \arrow[r, green(ncs), shift left=0.5ex, outer sep=-1.4pt, "D" pos=0.41]  & \wh{C}_N^{11} \arrow[r] \arrow[u, crossing over] \arrow[r, green(ncs), shift left=0.5ex, outer sep=-1.4pt, "D" pos=0.43] & \wh{C}_N^{21} \arrow[r] \arrow[u, crossing over] \arrow[r, green(ncs), shift left=0.5ex, outer sep=-1.4pt, "D" pos=0.42] & \wh{C}_N^{31} \arrow[r] \arrow[u, crossing over] \arrow[r, green(ncs), shift left=0.5ex, outer sep=-1.4pt, "D" pos=0.42] & \wh{C}_N^{41} \arrow[r] \arrow[u, crossing over, outer sep=-1.6pt, "D" pos=0.11] \arrow[r, green(ncs), shift left=0.5ex, outer sep=-1.4pt, "D" pos=0.42] & \dots
\end{tikzcd}
}
\end{center}
\caption{$\bigl(\wh{C}_N^{**}(H,I),\partial_{\mathrm{h}}, \partial_{\mathrm{v}}, D\bigr)$}
\label{ddiagonales}
\end{figure}

\begin{bibdiv}
\begin{biblist}

\bib{B1}{article}{
  title={Classification of braces of order p3},
  author={Bachiller, David},
  journal={Journal of Pure and Applied Algebra},
  volume={219},
  number={8},
  pages={3568--3603},
  year={2015},
  publisher={Elsevier}
}

\bib{B}{article}{
title={Extensions, matched products, and simple braces},
author={Bachiller, David},
journal={Journal of Pure and Applied Algebra},
volume={222},
number={7},
pages={1670--1691},
year={2018},
publisher={Elsevier}
}		

\bib{Ba}{article}{
  title={Partition function of the eight-vertex lattice model},
  author={Baxter, Rodney J},
  journal={Annals of Physics},
  volume={70},
  number={1},
  pages={193--228},
  year={1972},
  publisher={Elsevier}
}

\bib{CR}{article}{
  title={Regular subgroups of the affine group and radical circle algebras},
  author={Catino, Francesco},
  author={Rizzo, Roberto},
  journal={Bulletin of the Australian Mathematical Society},
  volume={79},
  number={1},
  pages={103--107},
  year={2009},
  publisher={Cambridge University Press}
}

\bib{CJO}{article}{
  title={Braces and the Yang--Baxter equation},
  author={Ced{\'o}, Ferran},
  author={Jespers, Eric},
  author={Okni{\'n}ski, Jan},
  journal={Communications in Mathematical Physics},
  volume={327},
  number={1},
  pages={101--116},
  year={2014},
  publisher={Springer}
}

\bib{CJO2}{article}{
title={Retractability of set theoretic solutions of the Yang--Baxter equation},
  author={Ced{\'o}, Ferran},
  author={Jespers, Eric},
  author={Okni{\'n}ski, Jan},
  journal={Advances in Mathematics},
  volume={224},
  number={6},
  pages={2472--2484},
  year={2010},
  publisher={Elsevier}
}

\bib{CJR}{article}{
title={Involutive Yang-Baxter groups},
  author={Ced{\'o}, Ferran},
  author={Jespers, Eric},
  author={Del Rio, Angel},
  journal={Transactions of the American Mathematical Society},
  volume={362},
  number={5},
  pages={2541--2558},
  year={2010}
}

\bib{DG}{article}{
  title={On groups of I-type and involutive Yang--Baxter groups},
  author={David, Nir Ben},
  author={Ginosar, Yuval},
  journal={Journal of Algebra},
  volume={458},
  pages={197--206},
  year={2016},
  publisher={Elsevier}
}

\bib{D}{article}{
  title={Set-theoretic solutions of the Yang--Baxter equation, RC-calculus, and Garside germs},
  author={Dehornoy, Patrick},
  journal={Advances in Mathematics},
  volume={282},
  pages={93--127},
  year={2015},
  publisher={Elsevier}
}

\bib{Dr}{article}{
  title={On some unsolved problems in quantum group theory},
  author={Drinfeld, Vladimir G},
  booktitle={Quantum groups},
  pages={1--8},
  year={1992},
  publisher={Springer}
}

\bib{ESS}{article}{
  title={Set-theoretical solutions to the quantum Yang-Baxter equation},
  author={Etingof, Pavel},
  author={Schedler, Travis},
  author={Soloviev, Alexandre},
  journal={Duke mathematical journal},
  volume={100},
  number={2},
  pages={169--209},
  year={1999},
  publisher={Duke University Press}
}

\bib{GI}{article}{
  title={Set-theoretic solutions of the Yang--Baxter equation, braces and symmetric groups},
  author={Gateva-Ivanova, Tatiana},
  journal={Advances in Mathematics},
  volume={338},
  pages={649--701},
  year={2018},
  publisher={Elsevier}
}

\bib{GI2}{article}{
  title={Quadratic algebras, Yang--Baxter equation, and Artin--Schelter regularity},
  author={Gateva-Ivanova, Tatiana},
  journal={Advances in Mathematics},
  volume={230},
  number={4-6},
  pages={2152--2175},
  year={2012},
  publisher={Elsevier}
}

\bib{GIVB}{article}{
  title={Semigroups ofI-Type},
  author={Gateva-Ivanova, Tatiana},
  author={Van den Bergh, Michel},
  journal={Journal of Algebra},
  volume={206},
  number={1},
  pages={97--112},
  year={1998},
  publisher={Elsevier}
}

\bib{GV}{article}{
	title={Skew braces and the Yang--Baxter equation},
	author={Guarnieri, Leandro},
	author={Vendramin, Leandro},
	journal={Mathematics of Computation},
	volume={86},
	number={307},
	pages={2519--2534},
	year={2017},
	review={\MR{3647970}}
}

\bib{JO}{article}{
  title={Monoids and groups of I-type},
  author={Jespers, Eric},
  author={Okni{\'n}ski, Jan},
  journal={Algebras and representation theory},
  volume={8},
  number={5},
  pages={709--729},
  year={2005},
  publisher={Springer}
}

\bib{LV}{article}{
  title={Homology of left non-degenerate set-theoretic solutions to the Yang--Baxter equation},
  author={Lebed, Victoria},
  author={Vendramin, Leandro},
  journal={Advances in Mathematics},
  volume={304},
  pages={1219--1261},
  year={2017},
  publisher={Elsevier}
}
		
\bib{R1}{article}{
  title={Braces, radical rings, and the quantum Yang--Baxter equation},
  author={Rump, Wolfgang},
  journal={Journal of Algebra},
  volume={307},
  number={1},
  pages={153--170},
  year={2007},
  publisher={Elsevier}
}

\bib{R3}{article}{
  title={The brace of a classical group},
  author={Rump, Wolfgang},
  journal={Note di Matematica},
  volume={34},
  number={1},
  pages={115--145},
  year={2014}
}

\bib{Y}{article}{
  title={Some exact results for the many-body problem in one dimension with repulsive delta-function interaction},
  author={Yang, Chen-Ning},
  journal={Physical Review Letters},
  volume={19},
  number={23},
  pages={1312},
  year={1967},
  publisher={APS}
}

\bib{S}{article}{
  title={Non-unitary set-theoretical solutions to the quantum Yang-Baxter equation},
  author={Soloviev, Alexandre},
  journal={arXiv preprint math/0003194},
  year={2000}
}

\end{biblist}
\end{bibdiv}

\end{document}